\numberwithin{equation}{section}
\theoremstyle{plain}
\newtheorem{thm}{\protect\theoremname}[section]
\theoremstyle{plain}
\newtheorem{prop}[thm]{\protect\propositionname}
\theoremstyle{definition}
\newtheorem{defn}[thm]{\protect\definitionname}
\theoremstyle{plain}
\newtheorem{lem}[thm]{\protect\lemmaname}
\theoremstyle{remark}
\newtheorem{rem}[thm]{\protect\remarkname}
\theoremstyle{definition}
\newtheorem{example}[thm]{\protect\examplename}
\theoremstyle{plain}
\newtheorem{cor}[thm]{\protect\corollaryname}
\date{}
\newsavebox{\@brx}
\newcommand{\llangle}[1][]{\savebox{\@brx}{\(\m@th{#1\langle}\)}%
  \mathopen{\copy\@brx\kern-0.5\wd\@brx\usebox{\@brx}}}
\newcommand{\rrangle}[1][]{\savebox{\@brx}{\(\m@th{#1\rangle}\)}%
  \mathclose{\copy\@brx\kern-0.5\wd\@brx\usebox{\@brx}}}
\providecommand{\corollaryname}{Corollary}
\providecommand{\definitionname}{Definition}
\providecommand{\examplename}{Example}
\providecommand{\lemmaname}{Lemma}
\providecommand{\propositionname}{Proposition}
\providecommand{\remarkname}{Remark}
\providecommand{\theoremname}{Theorem}
\begin{document}
\title{Non-degeneracy of Stochastic Line Integrals}
\author{Xi Geng\thanks{School of Mathematics and Statistics, University of Melbourne, Parkville
VIC 3010, Australia. Email: xi.geng@unimelb.edu.au. XG acknowledges
the support from ARC Grant DE210101352.}$\ $ and Sheng Wang\thanks{School of Mathematics and Statistics, University of Melbourne, Parkville
VIC 3010, Australia. Email: shewang4@student.unimelb.edu.au.}}
\maketitle
\begin{abstract}
We derive quantitative criteria for the existence of density for stochastic
line integrals and iterated line integrals along solutions of hypoelliptic
differential equations driven by fractional Brownian motion. As an
application, we also study the signature uniqueness problem for these
rough differential equations.
\end{abstract}

\section{Introduction and summary of main results}

It is classical that there is a natural pairing between a ${\cal C}^{1}$-path
$\gamma:[0,T]\rightarrow M$ in a differentiable manifold $M$ and
a differential one-form $\phi$ on $M$, which is defined by integration:
\[
\int_{0}^{T}\phi(d\gamma_{t})\triangleq\int_{0}^{T}\langle\phi,\dot{\gamma}_{t}\rangle dt.
\]
Here $\langle\cdot,\cdot\rangle$ denotes the pairing between cotangent
and tangent vectors. This notion of integration, sometimes known as
\textit{line integrals}, has an intrinsic geometric meaning in the
sense that it does not rely on local coordinates or embeddings of
$M$ into ambient Euclidean spaces. More generally, given a finite
sequence of one-forms $(\phi_{1},\cdots,\phi_{m})$, one can consider
an associated iterated line integral 
\[
\int_{0<t_{1}<\cdots<t_{m}<T}\phi_{1}(d\gamma_{t_{1}})\cdots\phi_{m}(d\gamma_{t_{m}})\triangleq\int_{0}^{T}\int_{0}^{t_{m}}\cdots\int_{0}^{t_{2}}\langle\phi_{1},\dot{\gamma}_{t_{1}}\rangle\cdots\langle\phi_{m},\dot{\gamma}_{t_{m}}\rangle dt_{1}\cdots dt_{m}.
\]
The definition of such integrals can be naturally extended to the
rough path context under suitable regularity conditions on the path
$\gamma$ and the one-forms (cf. \cite{LQ02,CDL15}). In the rough
path literature, these iterated line integrals are often referred
to as \textit{extended signatures} of $\gamma$ (cf. \cite{LQ12}
for their use in the context of Brownian motion).

A natural reason of considering line integrals is that they encode
rich geometric/topological information about the original path $\gamma.$
For instance, if $\gamma=(x_{t},y_{t})$ is a simple closed curve
in $\mathbb{R}^{2}$, the line integral of $\gamma$ against the one-form
\begin{equation}
\phi\triangleq\frac{1}{2}(xdy-ydx)\label{eq:AreaForm}
\end{equation}
gives the (signed) area enclosed by the path $\gamma$. The integral
against the one-form 
\[
d\theta\triangleq\frac{1}{r^{2}}(xdy-ydx)
\]
on the punctured plane gives the winding number of $\gamma$ around
the origin. Other topological properties associated with paths, e.g.
turning number and linking number, can also be defined in a similar
way in terms of line integrals. In the probabilistic context, one
can considder distributional properties of stochastic line integrals
along stochastic processes such as diffusion paths. A well-known example
is L\'evy's formula for the characteristic function of the area process
associated with a planar Brownian motion, i.e. the stochastic line
integral of Brownian motion against the area one-form defined by (\ref{eq:AreaForm})
(cf. \cite{Lev40}). Another famous example is Spitzer's asymptotic
Cauchy law for the Brownian winding number (cf. \cite{Spi58}). Stochastic
line integrals are also essential in the study of diffusions/martingales
on manifolds (cf. \cite{Hsu02}).

A more fundamental reason of considering (iterated) line integrals
is that the original path $\gamma$ is uniquely determined by these
integrals when one varies the degree $n$ and the one-forms $\phi_{1},\cdots,\phi_{n}$
in a suitably rich class. Indeed, when $M=\mathbb{R}^{d}$, the collection
of numbers (known as the \textit{signature} of $\gamma$)
\[
\big\{\int_{0<t_{1}<\cdots<t_{m}<T}d\gamma_{t_{1}}^{i_{1}}\cdots d\gamma_{t_{m}}^{i_{m}}:m\in\mathbb{N},\ i_{1},\cdots,i_{m}=1,\cdots,d\big\}
\]
uniquely determines the path $\gamma:[0,T]\rightarrow\mathbb{R}^{d}$
up to tree-like pieces (cf. \cite{Che58,HL10,BGLY16}). In \cite{Che73},
the author used iterated line integrals against differential forms
to construct a de Rham cohomology theory on loop spaces over manifolds
and proved that such a theory is canonically isomorphic to the singular
cohomology theory in classical algebraic topology.

In the probabilistic context, in the pioneering work of Le Jan and
Qian \cite{LQ12}, the authors developed an explicit method of recovering
a generic Brownian trajectory from the knowledge of its extended signatures.
Their underlying idea can be summarised as follows. Given an arbitrary
bounded domain $D$ in $\mathbb{R}^{d}$, by constructing a suitable
one-form $\phi$ supported on $D$ one can detect whether the Brownian
motion $B$ has visited $D$ from the knowledge of the line integral
against $\phi.$ More generally, given a discretisation of $\mathbb{R}^{d}$
into disjoint cubes with suitably constructed one-forms supported
inside each of them, one can detect the discrete route of the motion
from the knowledge of iterated line integrals against these on-forms.
By refining the space discretisation, one recovers the original trajectory
in the limit under this mechanism (cf. Section \ref{sec:UoS} below
for more discussion).

In the method of \cite{LQ12}, an essential property of the required
one-form $\phi$ is that 
\[
\int_{0}^{T}\phi(dB_{t})\neq0{\rm \iff}B\text{ visits the }D\ \ \ \text{a.s}.
\]
where $B$ is a Brownian motion in $\mathbb{R}^{d}$. Such a property
can be trivially implied by a much stronger non-degeneracy property
that the conditional law of $\int_{0}^{T}\phi(dB_{t})$ given that
$B$ visits the domain $D$ is absolutely continuous with respect
to the Lebesgue measure. This motivates the following general question
which is the main object of study in the present work.

\vspace{2mm} We consider the following SDE on $M$ ($M=\mathbb{R}^{n}$
or a compact differentiable manifold):

\begin{equation}
\begin{cases}
dX_{t}=\sum_{\alpha=1}^{d}V_{\alpha}(X_{t})dB_{t}^{\alpha}, & 0\leqslant t\leqslant T;\\
X_{0}=x_{0}\in M.
\end{cases}\label{eq:MainRDE}
\end{equation}
Here $B=(B^{1},\cdots,B^{d})$ is assumed to be a $d$-dimensional
fractional Brownian motion with Hurst parameter $H>1/4$. This falls
into the rough path framework under which the SDE is well-posed in
the sense of rough paths. The vector fields $V_{1},\cdots,V_{d}$
on $M$ are assumed to be of class $C_{b}^{\infty}$ and satisfy\textit{
}the so-called \textit{H\"ormander's condition} (cf. Definition \ref{def:HorCond}).
This is a natural non-degeneracy condition under which the solution
$X_{t}$ is known to have a smooth density function with respect to
the Lebesgue measure (cf. \cite{CHLT15}). Throughout the rest, we
use $C_{p}^{\infty}$ to mean the class of functions/one-forms whose
derivatives (of all orders) have at most polynomial growth. This property
ensures the $L^{p}$-integrability (for all $p>1$) of all relevant
random variables under consideration.

\vspace{2mm}\noindent \textit{Question}. Let $\phi$ be a $C_{p}^{\infty}$
one-form on $M$. Can we identify an explicit quantitative condition
on $\phi$, such that the conditional distribution of the stochastic
line integral $\int_{0}^{T}\phi(dX_{t})$, given that $X$ visits
the interior of the support of $\phi$, admits a density function
with respect to the Lebesgue measure?

\vspace{2mm} We first make a few comments. It is necessary to restrict
on the event that $X$ visits $({\rm supp}\phi)^{\circ}$, for otherwise
the line integral is trivially zero. In addition, suppose that $M=\mathbb{R}^{n}$,
${\rm supp}\phi\neq M$ and $x_{0}\in({\rm supp\phi})^{\circ}$. For
the stochastic line integral $\int_{0}^{T}\phi(dX_{t})$ to have a
density function, it is necessary that $\phi$ is not closed. Indeed,
if $d\phi=0$, then $\phi=df$ for some smooth function $f$ (every
closed one-form on $\mathbb{R}^{n}$ is exact). In this case, we have
\[
\int_{0}^{T}\phi(dX_{t})=f(X_{T})-f(x_{0}).
\]
This integral will have constant value on the non-trivial event $\{X_{T}\notin{\rm supp\phi}\}.$
As a result, the line integral cannot have a density function in this
case.

As we will see, in the elliptic case, the non-closedness of $\phi$
is essentially sufficient for the line integral to have a density.
\begin{thm}
Suppose that the vector fields $V_{1},\cdots,V_{d}$ are elliptic.
Let $\phi$ be a $C_{p}^{\infty}$ one-form such that 
\[
d\phi\neq0\ \ \ \text{a.e. }\text{on }{\rm supp\phi.}
\]
Then the conditional distribution of $\int_{0}^{T}\phi(dX_{t})$,
given that $X$ visits $({\rm supp\phi})^{\circ}$, admits a density
with respect to the Lebesgue measure.
\end{thm}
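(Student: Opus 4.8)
The plan is to use Malliavin calculus, establishing absolute continuity via the Bouleau--Hirsch criterion: conditionally on the event $A\triangleq\{X\text{ visits }(\mathrm{supp}\,\phi)^{\circ}\}$, it suffices to show that the Malliavin derivative of $Y\triangleq\int_{0}^{T}\phi(dX_{t})$ is non-zero almost surely on $A$. The stochastic line integral $Y$ is itself a smooth Wiener functional (in the Malliavin sense for fractional Brownian motion with $H>1/4$), being a rough integral of a $C_{p}^{\infty}$ one-form along the solution $X$ of the hypoelliptic RDE \eqref{eq:MainRDE}; its polynomial-growth derivatives guarantee $Y\in\mathbb{D}^{\infty}$ and $L^{p}$-integrability of all relevant quantities. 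So the Bouleau--Hirsch machinery applies, and the whole problem reduces to the non-degeneracy statement $\|DY\|_{\mathcal{H}}>0$ a.s.\ on $A$, where $\mathcal{H}$ is the Cameron--Martin space of the fractional Brownian motion.

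Next I would compute $DY$ explicitly. Writing $Y=\int_{0}^{T}\langle\phi(X_{t}),dX_{t}\rangle$ and differentiating through the rough integral, one obtains an expression of the schematic form
\[
D_{s}^{\alpha}Y=\int_{0}^{T}\big\langle (\nabla\phi)(X_{t})\,J_{s\to t}V_{\alpha}(X_{s})\,\mathbf{1}_{s\le t},\,dX_{t}\big\rangle+\big\langle\phi(X_{t}),\,J_{s\to t}V_{\alpha}(X_{s})\big\rangle\Big|\text{(boundary/correction terms)},
\]
where $J_{s\to t}$ is the Jacobian of the solution flow; the precise identity is a rough-path analogue of the classical SDE computation and may be cited from the existing Malliavin-calculus-for-RDEs literature (e.g. \cite{CHLT15}). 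In the elliptic case the Jacobian $J_{s\to t}$ is invertible and the vector fields $V_{\alpha}(x)$ span $T_{x}M$ at every $x$, so the reduced Malliavin covariance machinery is already known to be non-degenerate; the point here is the sharper statement about $DY$ rather than about $DX_{T}$.

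The key mechanism — and I expect this to be the main obstacle — is to exploit the hypothesis $d\phi\neq0$ a.e.\ on $\mathrm{supp}\,\phi$ to rule out the event $\{DY=0\}\cap A$. The idea is this: if $DY$ vanished on a set of positive probability inside $A$, one would like to derive a contradiction by "localizing in time" near the moment the path enters $(\mathrm{supp}\,\phi)^{\circ}$ and perturbing the path infinitesimally in a direction that changes the enclosed "area" — i.e.\ pairing against the two-form $d\phi$. Concretely, using ellipticity one can choose directions $\alpha,\beta$ and a short time window $[s_{1},s_{2}]$ during which $X$ is inside $(\mathrm{supp}\,\phi)^{\circ}$, and consider the second-order Malliavin-type variation; the leading contribution is governed by $d\phi(V_{\alpha},V_{\beta})(X_{u})$ for $u$ in that window, which is non-zero on a positive-measure set by hypothesis. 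Making this rigorous requires: (i) a support-type argument showing that, conditionally on $A$, the occupation time of $(\mathrm{supp}\,\phi)^{\circ}$ is positive a.s.; (ii) a careful treatment of the iterated-integral structure of $DY$ so that the $d\phi$ term is not cancelled by lower-order terms — this is where the rough-path regularity ($H>1/4$, the need to control second-level iterated integrals and their Malliavin derivatives) genuinely enters and is the technical heart; and (iii) an argument, possibly via the Norris-type lemma / quantitative deterministic estimates available in this setting, that a short-time non-degeneracy at the level of $d\phi$ propagates to non-degeneracy of the full functional $DY$. Once $\{DY=0\}\cap A$ is shown to be null, Bouleau--Hirsch gives the conditional density and the proof is complete.
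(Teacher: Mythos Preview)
Your framework is correct and matches the paper's: reduce to $\|DY\|_{\mathcal H}>0$ a.s.\ on $A$ via Bouleau--Hirsch, compute $D_h Y$, and reach a contradiction with the hypothesis on $d\phi$. But the mechanism you sketch for the core step is misidentified, and as written there is a gap.

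The ``second-order Malliavin-type variation'' is a red herring: the argument is entirely first-order in the Malliavin sense, and knowing that $D^2Y$ carries a $d\phi$-contribution does not by itself prevent $DY$ from vanishing. What actually happens is this. Assuming $D_h Y(w)=0$ for every $h\in\mathcal H$, one invokes the separating property of the Cameron--Martin space (if $\int_0^T f_\alpha\,dh^\alpha=0$ for all $h$ then $f\equiv0$) to conclude that the \emph{integrand} in the explicit formula for $D_h Y$ vanishes pointwise in $t$ and $\alpha$. Ellipticity then strips off the $V_\alpha(X_t)$ factor, leaving an identity of the form $(\zeta_T-\zeta_t)+\phi(X_t)\Phi_t\equiv0$ for all $t$. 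Now one differentiates this identity \emph{in the time variable $t$} --- a rough-path differentiation, not a second Malliavin derivative --- and invokes the true-roughness of fBM; this is exactly where the Norris-type input you mention belongs, in its qualitative form ``$\int_0^t y_s\,dB_s=0$ for all $t\Rightarrow y\equiv0$''. A short computation shows the resulting constraint is $(\partial_j\phi_i-\partial_i\phi_j)(X_t)V_\alpha^i=0$, and ellipticity again gives $d\phi(X_t)=0$ for \emph{every} $t\in[0,T]$. Since $X_r$ has a density for each fixed $r>0$ and $\{d\phi=0\}\cap(\mathrm{supp}\,\phi)^\circ$ is Lebesgue-null by hypothesis, a single rational time $r$ with $X_r(w)\in(\mathrm{supp}\,\phi)^\circ$ already yields the contradiction. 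No localisation to short time windows, no occupation-time estimate, and no second Malliavin derivative of $Y$ is needed; your item (iii) is the right ingredient, but you have applied it in the wrong direction --- one assumes $DY=0$ and deduces $d\phi(X_\cdot)=0$ along the path, not the converse.
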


The hypoelliptic case requires a stronger condition and more delicate
analysis. The general result is given by Theorem \ref{thm:GenMain}
below. Here we state the special version in the step-two hypoelliptic
case.
\begin{thm}
\label{thm:Step2Intro}Consider the case when $M=\mathbb{R}^{3}$
and $d=2$. Suppose that the vector fields ${\cal V}=\{V_{1},V_{2},[V_{1},V_{2}]\}$
linear span $T_{x}M$ at every point $x\in M$. Let $\phi$ be a $C_{p}^{\infty}$
one-form on $M$. Suppose that 
\[
d\big(\phi+d\phi(V_{1},V_{2})\omega^{3}\big)\neq0\ \ \ \text{a.e.\ on }{\rm supp\phi},
\]
where $\{\omega^{1},\omega^{2},\omega^{3}\}$ is the cotangent frame
field dual to ${\cal V}$. Then the conditional distribution of $\int_{0}^{T}\phi(dX_{t})$,
given that $X$ visits $({\rm supp\phi})^{\circ}$, admits a density
with respect to the Lebesgue measure.
\end{thm}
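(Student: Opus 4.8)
The plan is to use the Malliavin calculus for rough differential equations driven by fractional Brownian motion. First I would record that, since the $V_\alpha$ are $C_b^\infty$ and $\phi\in C_p^\infty$, the solution $X$ and the line integral $Y_T:=\int_0^T\phi(dX_t)$ lie in the relevant Malliavin--Sobolev spaces (in particular $Y_T\in\mathbb D^{1,2}$), with a Malliavin derivative represented by a continuous controlled process $s\mapsto D_sY_T$ on $[0,T]$. By the Bouleau--Hirsch criterion, the law of $Y_T$ restricted to $\{\|DY_T\|_{\mathcal H}>0\}$ is absolutely continuous; writing $A$ for the event that $X$ visits $(\mathrm{supp}\,\phi)^\circ$, it therefore suffices to prove that $\|DY_T\|_{\mathcal H}>0$ almost surely on $A$ (the assertion about the conditional law given $A$ is then the case $\mathbb P(A)>0$). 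Because $D_\cdot Y_T$ is continuous and a nonzero continuous function has positive $\mathcal H$-norm, the event $\{\|DY_T\|_{\mathcal H}=0\}$ coincides with $N:=\{D_sY_T=0\text{ for every }s\in[0,T]\}$, so the problem reduces to showing $\mathbb P(N\cap A)=0$.

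The algebraic point behind the hypothesis is the following. The pair $Z:=(X,Y)$ solves the (drift-free) rough differential equation on $M\times\mathbb R$ with vector fields $\widetilde V_\alpha:=\big(V_\alpha,\langle\phi,V_\alpha\rangle\big)$, $\alpha=1,2$. Using $V_1\langle\phi,V_2\rangle-V_2\langle\phi,V_1\rangle=d\phi(V_1,V_2)+\langle\phi,[V_1,V_2]\rangle$, one computes $[\widetilde V_1,\widetilde V_2]=\big([V_1,V_2],\langle\bar\phi,[V_1,V_2]\rangle\big)$ with $\bar\phi:=\phi+d\phi(V_1,V_2)\,\omega^3$; since $\bar\phi$ and $\phi$ agree on $V_1,V_2$, the three fields $\widetilde V_1,\widetilde V_2,[\widetilde V_1,\widetilde V_2]$ all lie in (and, because their projections $V_1,V_2,[V_1,V_2]$ span $TM$, span) the corank-one distribution $\ker\eta$, where $\eta:=dy-\bar\phi$ and $y$ is the $\mathbb R$-coordinate. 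For $P,Q$ tangent to $\ker\eta$ one has $\eta([P,Q])=-d\eta(P,Q)=d\bar\phi(P,Q)$, so the length-three bracket $[\widetilde V_\alpha,[\widetilde V_1,\widetilde V_2]]$ escapes $\ker\eta$ precisely when $d\bar\phi(V_\alpha,[V_1,V_2])\neq0$. The role of the correction $d\phi(V_1,V_2)\,\omega^3$ is exactly to kill the $\omega^1\wedge\omega^2$-component of $d\bar\phi$, i.e.\ $d\bar\phi(V_1,V_2)\equiv0$; hence $\widetilde V_1,\widetilde V_2$ satisfy H\"ormander's condition at $(x,y)$ as soon as $d\bar\phi(x)\neq0$, and conversely the failure of H\"ormander's condition at any $(x,y)$ forces $d\bar\phi(x)=0$.

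On the event $N$ I would then run a hypoelliptic non-degeneracy argument. Write $D_s^\alpha Y_T=\mu_s\widetilde V_\alpha(Z_s)$, where $\mu_s:=e^*\,J_{s\to T}$, $e^*$ is the covector dual to the $\mathbb R$-direction, and $J_{s\to T}$ is the (invertible) Jacobian along $Z$ of the time-$s$-to-$T$ flow of the joint equation; in particular $\mu_s\neq0$ for all $s$. On $N$ we have $\mu_s\widetilde V_\alpha(Z_s)=0$ for all $s,\alpha$, and since the joint equation has no drift, $d\big(\mu_sW(Z_s)\big)=\sum_\alpha\mu_s[\widetilde V_\alpha,W](Z_s)\,dB^\alpha_s$ for every smooth vector field $W$; applying the Norris-type lemma for Gaussian rough paths iteratively propagates $\mu_sW(Z_s)\equiv0$ to every $W$ in the Lie algebra generated by $\widetilde V_1,\widetilde V_2$. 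Since $\mu_s\neq0$, this means H\"ormander's condition for $\widetilde V_1,\widetilde V_2$ fails at $(X_s,Y_s)$ for every $s\in[0,T]$, whence $d\bar\phi(X_s)=0$ for all $s$ by the previous paragraph, so $X$ stays in the closed set $\{d\bar\phi=0\}$. But $\{d\bar\phi=0\}\cap(\mathrm{supp}\,\phi)^\circ$ is Lebesgue-null by hypothesis, and $X_s$ has a density for each $s>0$ (the H\"ormander theorem for the base equation, cf.\ \cite{CHLT15}), so by Fubini $X$ a.s.\ spends zero time there; as $(\mathrm{supp}\,\phi)^\circ$ is open and $X$ continuous, it follows that on $N$ the path $X$ a.s.\ never enters $(\mathrm{supp}\,\phi)^\circ$. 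Hence $\mathbb P(N\cap A)=0$, which finishes the proof.

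The main obstacle I anticipate is the analytic content of the third paragraph in the regime $1/4<H\leqslant1/2$: there the Cameron--Martin space is not a function space, and one must verify carefully that $D_\cdot Y_T$, the adjoint process $\mu$, and the controlled paths $s\mapsto\mu_sW(Z_s)$ are genuine controlled processes with the regularity and ($C_p^\infty$-induced) integrability needed to invoke — and to iterate — the rough-path Norris lemma, and to justify the reduction $\{\|DY_T\|_{\mathcal H}=0\}=N$. These are precisely the tools provided by the framework of \cite{CHLT15} (and by the general machinery developed earlier in the paper); the measure-theoretic reduction and the bracket identity are comparatively routine.
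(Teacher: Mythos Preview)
Your proposal is correct and takes a genuinely different route from the paper. The paper works directly with the line integral: starting from the explicit formula for $D_hF$ (Lemma~\ref{lem:HDerSLI}), it iterates the true-roughness argument at the level of the one-form $\phi$ to obtain the hierarchy $(\eta\Phi^{-1}+\phi)\cdot V_I+\psi_I=0$ (Lemma~\ref{lem:ConsAllDeg}), inverts this against the frame to produce the auxiliary one-form $\Xi=-d\phi(V_1,V_2)\omega^3$, and then differentiates once more to arrive at $i(V_\alpha)d(\phi-\Xi)=0$ along the path (Lemma~\ref{lem:HypoCrit} together with (\ref{eq:IntLie})), which in the step-two case is exactly $d\bar\phi=0$. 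You instead lift to the joint system $Z=(X,Y)$ on $M\times\mathbb R$ with vector fields $\widetilde V_\alpha=(V_\alpha,\langle\phi,V_\alpha\rangle)$ and run the \emph{standard} H\"ormander argument for $Z$; your key observation is that the three fields $\widetilde V_1,\widetilde V_2,[\widetilde V_1,\widetilde V_2]$ span precisely $\ker(dy-\bar\phi)$, so that failure of H\"ormander for the lifted system at $(x,y)$ forces $d\bar\phi(x)=0$. This is more conceptual---it explains the appearance of $\bar\phi$ geometrically as the contact-type form whose kernel is the distribution generated by $\widetilde V_1,\widetilde V_2$---and it reduces the problem to the off-the-shelf density theory of \cite{CHLT15}. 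The paper's direct computation, on the other hand, is what produces the general criterion (\ref{eq:GenMain}) and the construction scheme of Theorem~\ref{thm:NonDegHypo} in arbitrary step; in your framework one would still need to characterise H\"ormander for the lifted system in terms of $\phi$ at higher step, which amounts to the same bookkeeping the paper carries out via the $\psi_I$. Your caveat about the $C_p^\infty$ (rather than $C_b^\infty$) growth of $\widetilde V_\alpha$ and the analytic justification for $1/4<H\leqslant1/2$ is well placed; these are exactly the points the paper defers to the technical remark after Lemma~\ref{lem:HDerSLI} and to \cite{CHLT15,Ina14}, and the same references cover your setting.
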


In Theorem \ref{thm:NonDegHypo} below, we also derive an explicit
method of constructing one-forms that satisfy the general non-degeneracy
criterion given by Theorem \ref{thm:GenMain}. In the above step-two
hypoelliptic case, the method is summarised in the following result.
In this case, the class of one-forms that satisfy such a condition
is as generic as pairs of $C_{p}^{\infty}$-functions.
\begin{prop}
Under the setting of Theorem \ref{thm:Step2Intro}, consider a one-form
$\phi$ given by 
\[
\phi\triangleq c_{1}\omega^{1}+c_{2}\omega^{2}+(V_{1}c_{2}-V_{2}c_{1})\omega^{3}
\]
where $c_{1},c_{2}\in C_{p}^{\infty}(M)$. Suppose that 
\begin{equation}
d\phi\neq0\ \ \ \text{a.e. }\text{on }{\rm supp\phi.}\label{eq:S2ConsIntro}
\end{equation}
Then the conditional distribution of $\int_{0}^{T}\phi(dX_{t})$,
given that $X$ visits $({\rm supp\phi})^{\circ}$, admits a density
with respect to the Lebesgue measure.
\end{prop}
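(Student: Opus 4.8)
The plan is to verify that the one-form $\phi = c_1\omega^1 + c_2\omega^2 + (V_1c_2 - V_2c_1)\omega^3$ satisfies the hypothesis of Theorem~\ref{thm:Step2Intro}, and then invoke that theorem directly. Thus the entire work is reduced to the algebraic identity

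$$\phi + d\phi(V_1,V_2)\,\omega^3 = \phi + (V_1 c_2 - V_2 c_1)\,\omega^3 \qquad\text{(up to the lower-order term)},$$

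i.e. to showing that the correction term $d\phi(V_1,V_2)\,\omega^3$ introduced in Theorem~\ref{thm:Step2Intro} is, for this particular $\phi$, essentially a multiple of $\omega^3$ that combines with the existing $\omega^3$-component of $\phi$ in a controlled way, so that the condition "$d(\phi + d\phi(V_1,V_2)\omega^3)\neq 0$ a.e. on $\mathrm{supp}\,\phi$" becomes equivalent to the stated hypothesis $d\phi\neq 0$ a.e. on $\mathrm{supp}\,\phi$.

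First I would compute $d\phi(V_1,V_2)$ using the intrinsic formula $d\phi(V_1,V_2) = V_1(\phi(V_2)) - V_2(\phi(V_1)) - \phi([V_1,V_2])$. Since $\{\omega^1,\omega^2,\omega^3\}$ is the coframe dual to $\mathcal{V} = \{V_1,V_2,[V_1,V_2]\}$, we have $\phi(V_1) = c_1$, $\phi(V_2) = c_2$, and $\phi([V_1,V_2]) = V_1 c_2 - V_2 c_1$. Hence
$$d\phi(V_1,V_2) = V_1 c_2 - V_2 c_1 - (V_1 c_2 - V_2 c_1) = 0.$$
This is the key computation: the $\omega^3$-coefficient of $\phi$ has been chosen precisely so that $d\phi(V_1,V_2)$ vanishes identically. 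Consequently $\phi + d\phi(V_1,V_2)\,\omega^3 = \phi$, and the non-degeneracy condition of Theorem~\ref{thm:Step2Intro} reduces exactly to $d\phi\neq 0$ a.e. on $\mathrm{supp}\,\phi$, which is hypothesis~(\ref{eq:S2ConsIntro}). Applying Theorem~\ref{thm:Step2Intro} then yields the conclusion.

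I expect the only genuinely delicate point to be a bookkeeping one: making sure that $\phi$ as constructed indeed lies in the class $C_p^\infty$ (this follows since $c_1,c_2\in C_p^\infty(M)$, the vector fields $V_\alpha$ are $C_b^\infty$, and the coframe $\{\omega^i\}$ is smooth with controlled growth under Hörmander's condition, so that $V_1 c_2 - V_2 c_1 \in C_p^\infty$), and that the support condition is handled correctly — one should check that $\mathrm{supp}\,\phi$ is contained in $\mathrm{supp}\,c_1 \cup \mathrm{supp}\,c_2 \cup \mathrm{supp}(V_1c_2 - V_2c_1)$, but since the hypothesis $d\phi\neq 0$ a.e. on $\mathrm{supp}\,\phi$ is imposed directly on $\mathrm{supp}\,\phi$ itself, no further argument is needed there. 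Beyond this, the proof is immediate once the identity $d\phi(V_1,V_2) = 0$ is established; there is no real obstacle, as all the analytic difficulty has been absorbed into Theorem~\ref{thm:Step2Intro}.
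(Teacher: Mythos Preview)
Your proposal is correct and follows essentially the same approach as the paper. The paper's argument (in the discussion leading to Corollary~\ref{cor:Step2Explicit}) identifies precisely the same key fact, namely that the choice $c_3 = V_1 c_2 - V_2 c_1$ forces $d\phi(V_1,V_2)=0$, so that the step-two criterion $d(\phi + d\phi(V_1,V_2)\omega^3)\neq 0$ collapses to $d\phi\neq 0$; your direct computation via the intrinsic formula for $d\phi(V_1,V_2)$ is exactly this.
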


Our analysis can be extended to the case of iterated line integrals
\[
F\triangleq\int_{0<t_{1}<\cdots<t_{m}<T}\phi_{1}(dX_{t_{1}})\cdots\phi_{m}(dX_{t_{m}}).
\]
As we will see, if $\phi_{1},\cdots,\phi_{m}$ have disjoint supports,
our general condition given by (\ref{eq:GenMain}) imposed on each
$\phi_{i}$ continues to guarantee the conditional non-degeneracy
of $F.$ On the other hand, if these one-forms have a common compact
support, when $m\geqslant2$ it is indeed possible to have all the
$\phi_{i}$'s being exact while $F$ is non-degenerate. Recall what
we explained earlier that this is not possible when $m=1$. Our results
for iterated line integrals are discussed in Section \ref{subsec:ILI}
below.

Our study is motivated by the signature uniqueness problem in the
spirit of \cite{LQ12}. As an application, in Section \ref{sec:UoS}
we prove a signature uniqueness theorem for the SDE (\ref{eq:MainRDE})
in the elliptic or step-two hypoelliptic case, which asserts that
with probability one the solution path $t\mapsto X_{t}$ is uniquely
determined by its signature transform up to reparametrisation. Under
existing methodology, the key ingredient is the explicit construction
of compactly supported one-forms that satisfy our non-degeneracy conditions.
The main result for this part is stated in Theorem \ref{thm:ASSigUniq}
below.

Finally, we remark that our results hold for more general Gaussian
driving processes essentially without changing any part of the proofs.
The required Gaussian setting is precisely the one formulated in the
work of \cite{CHLT15} concerning the smoothness of density for Gaussian
rough differential equations. We formulate our results in the context
of fractional Brownian motion simply to avoid the non-rewarding effort
of restating all the assumptions proposed in \cite{CDL15}.

\vspace{2mm}\noindent \textbf{Organisation}. The present article
is organised in the following way. In Section 2, we recall basic notions
from rough path theory and some terminology from differential geometry.
In Section \ref{subsec:SingleLI}, we derive our quantitative criteria
for the non-degeneracy of single stochastic line integrals as well
as an explicit method of construction. We begin with the elliptic
case and then proceed to the hypoelliptic case. The analysis is made
more transparent in the step-two hypoelliptic case after the general
discussion. In Section \ref{subsec:ILI}, we extend our analysis to
the case of iterated line integrals. In Section \ref{sec:UoS}, we
discuss the application of our results to the signature uniqueness
problem for rough differential equations.

\section{Preliminary notions from rough path theory and differential geometry}

In this section, we recall some basic tools and discuss the basic
kind of pathwise analysis that will be performed frequently in the
sequel. We first give a notational comment which will be applied throughout
the rest of the article.

\vspace{2mm}\noindent \textbf{Notation}. Above all, we will adopt
Einstein's convention of summation, i.e. doubly repeated indices are
summed automatically. We will also use matrix notation exclusively.
For instance, a vector field $V=V^{i}\frac{\partial}{\partial x^{i}}$
on $\mathbb{R}^{n}$ is identified as an $n\times1$ column vector
function. $DV$ is the $n\times n$ matrix whose $(i,j)$-entry is
$\frac{\partial V^{i}}{\partial x^{j}}$. A one-form $\phi=\phi_{i}dx^{i}$
on $\mathbb{R}^{n}$ is identified as a $1\times n$ row vector function.
If $f\in C^{\infty}(\mathbb{R}^{n})$, $df$ is the one-form defined
by $df\triangleq\frac{\partial f}{\partial x^{i}}dx^{i}$. Given a
smooth function $f$ and vector field $V$, we write $Vf\triangleq df\cdot V=V^{i}\frac{\partial f}{\partial x^{i}}$.
The pairing between a one-form $\phi$ and a vector field $V$ is
obviously $\phi\cdot V$, while on the other hand we write $V\phi$
as the $1\times n$ row vector defined by $V\phi\triangleq(V\phi_{1},\cdots,V\phi_{n})$.
Note that $DV$ and $V\phi$ are local quantities that do not have
intrinsic geometric meaning.

\subsection{Pathwise differential calculus}

Let $\{X_{t}:t\geqslant0\}$ be the solution to the SDE (\ref{eq:MainRDE})
where $M=\mathbb{R}^{n}$ for now, $B_{t}$ is a $d$-dimensional
fBM with Hurst parameter $H>1/4$ and the vector fields $V_{1},\cdots,V_{d}\in C_{b}^{\infty}$.
Here $B_{t}$ is regarded as a geometric rough path and the SDE is
solved under the framework of rough path theory (cf. \cite{LQ02}).
Throughout the rest, we will assume that $B_{t}$ is realised on the
canonical path space. More specifically, the underlying probability
space is $({\cal W},{\cal B}({\cal W}),\mathbb{P})$ where ${\cal W}$
is the Banach space of $\mathbb{R}^{d}$-valued continuous paths starting
at the origin, ${\cal B}({\cal W})$ is the Borel $\sigma$-algebra
over ${\cal W}$ and $\mathbb{P}$ is the law of the fBM. The process
$B$ is taken to be the coordinate process on ${\cal W}$. Under this
set-up, solutions of differential equations driven by $B$ and stochastic
line integrals along $B$ are regarded as functionals over ${\cal W}$.

The rough path nature of (\ref{eq:MainRDE}) allows us to justify
and make use of pathwise differential calculus in the ordinary manner.
To illustrate this, we first recall the following definition of the
Cameron-Martin subspace (cf. \cite{FH14}).
\begin{defn}
The \textit{Cameron-Martin subspace} associated with fBM is the subspace
${\cal H}$ of paths $h\in{\cal W}$ that can be represented in the
form 
\[
h_{t}=\mathbb{E}[ZB_{t}],\ \ \ 0\leqslant t\leqslant T,
\]
where $Z$ is an element in the first Wiener chaos (i.e. the $L^{2}$-closure
of linear functions on ${\cal W}$ under $\mathbb{P}$). ${\cal H}$
is a Hilbert space with respect to the inner product 
\[
\langle h_{1},h_{2}\rangle_{{\cal H}}\triangleq\mathbb{E}[Z_{1}Z_{2}],
\]
where $Z_{i}$ is the chaos element associated with $h_{i}$ in its
definition ($i=1,2$).
\end{defn}

We use the following lemma to illustrate an example of the type of
pathwise calculation that will appear frequently later on. If $F:{\cal W}\rightarrow\mathbb{R}$
is a functional of $B$ and $h\in{\cal H}$ is a Cameron-Martin path,
we write 
\[
D_{h}F(w)\triangleq\frac{d}{d\varepsilon}\left|_{\varepsilon=0}\right.F(w+\varepsilon h)
\]
as the derivative of $F$ along direction $h$ at the location $w\in{\cal W}$.
The \textit{Malliavin derivative} of $F$ is the ${\cal H}$-valued
random variable defined by 
\[
DF\triangleq[h\mapsto D_{h}F]\in{\cal H}^{*}\cong{\cal H}.
\]

\begin{lem}
\label{lem:HDerSLI}Let $\phi=\phi_{i}dx^{i}$ be a $C_{p}^{\infty}$
one-form on $\mathbb{R}^{n}$. Consider the stochastic line integral
\[
F\triangleq\int_{0}^{T}\phi(dX_{t}).
\]
Then 
\begin{equation}
D_{h}F(w)=\int_{0}^{T}\big((\zeta_{T}(w)-\zeta_{t}(w))\cdot\Phi_{t}^{-1}(w)+\phi(X_{t}(w))\big)\cdot V_{\alpha}(X_{t}(w))dh_{t}^{\alpha}.\label{eq:HDerSLI}
\end{equation}
Here $\Phi_{t}(w)\triangleq\frac{\partial X_{t}(w)}{\partial x_{0}}$
denotes the Jacobian of the RDE (\ref{eq:MainRDE}) and 
\begin{equation}
\zeta_{t}(w)\triangleq\int_{0}^{t}d(\phi_{i}V_{\alpha}^{i})(X_{s}(w))\cdot\Phi_{s}(w)dw_{s}^{\alpha}.\label{eq:ZetaPath}
\end{equation}
\end{lem}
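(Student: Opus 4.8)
The plan is to perform the classical pathwise (Cameron--Martin) differentiation of the Itô--Lyons solution map and then reorganise the outcome by an integration-by-parts identity. Fix $w$ in the support of $\mathbb{P}$ and $h\in{\cal H}$. Since ${\cal H}$ has complementary Young regularity with respect to the rough path $w$ (this is the Gaussian setting of \cite{CHLT15}), the perturbed driver $w+\varepsilon h$ is again a geometric rough path, and by continuity/differentiability of the Itô--Lyons map the map $\varepsilon\mapsto X^{\varepsilon}$ (the solution of (\ref{eq:MainRDE}) driven by $w+\varepsilon h$) is differentiable. Writing $Y_{t}\triangleq D_{h}X_{t}=\frac{d}{d\varepsilon}|_{\varepsilon=0}X_{t}^{\varepsilon}$ and differentiating the equation, one gets the linearised RDE $dY_{t}=DV_{\alpha}(X_{t})Y_{t}\,dw_{t}^{\alpha}+V_{\alpha}(X_{t})\,dh_{t}^{\alpha}$ with $Y_{0}=0$.

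Next I would solve this linearised equation by variation of constants. The Jacobian $\Phi_{t}$ solves the homogeneous equation $d\Phi_{t}=DV_{\alpha}(X_{t})\Phi_{t}\,dw_{t}^{\alpha}$, $\Phi_{0}=I$, with $d(\Phi_{t}^{-1})=-\Phi_{t}^{-1}DV_{\alpha}(X_{t})\,dw_{t}^{\alpha}$; a direct check using the Leibniz rule (valid here since $h$ is Young-regular relative to $w$) shows $Y_{t}=\Phi_{t}\int_{0}^{t}\Phi_{s}^{-1}V_{\alpha}(X_{s})\,dh_{s}^{\alpha}$. Since $F=\int_{0}^{T}(\phi\cdot V_{\alpha})(X_{t})\,dw_{t}^{\alpha}$, perturbing both the integrand and the integrator gives $F(w+\varepsilon h)=\int_{0}^{T}(\phi\cdot V_{\alpha})(X_{t}^{\varepsilon})\,dw_{t}^{\alpha}+\varepsilon\int_{0}^{T}(\phi\cdot V_{\alpha})(X_{t}^{\varepsilon})\,dh_{t}^{\alpha}$, and differentiating at $\varepsilon=0$ yields $D_{h}F=\int_{0}^{T}d(\phi_{i}V_{\alpha}^{i})(X_{t})\cdot Y_{t}\,dw_{t}^{\alpha}+\int_{0}^{T}\phi(X_{t})\cdot V_{\alpha}(X_{t})\,dh_{t}^{\alpha}$.

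It then remains to substitute the formula for $Y_{t}$ and integrate by parts. The first integral becomes $\int_{0}^{T}d\zeta_{t}\cdot\big(\int_{0}^{t}\Phi_{s}^{-1}V_{\alpha}(X_{s})\,dh_{s}^{\alpha}\big)$ with $\zeta_{t}$ as in (\ref{eq:ZetaPath}); integrating by parts (both $\zeta_{\cdot}$ and $\int_{0}^{\cdot}\Phi_{s}^{-1}V_{\alpha}(X_{s})\,dh_{s}^{\alpha}$ vanish at $t=0$) and using $d\big(\int_{0}^{t}\Phi_{s}^{-1}V_{\alpha}(X_{s})\,dh_{s}^{\alpha}\big)=\Phi_{t}^{-1}V_{\alpha}(X_{t})\,dh_{t}^{\alpha}$ rewrites it as $\int_{0}^{T}(\zeta_{T}-\zeta_{t})\cdot\Phi_{t}^{-1}V_{\alpha}(X_{t})\,dh_{t}^{\alpha}$; adding the second integral produces exactly (\ref{eq:HDerSLI}). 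I expect the main obstacle to be not the algebra but the rigorous justification of these pathwise manipulations in the rough setting --- differentiating under the rough integral sign, the Leibniz/variation-of-constants identities, and the integration by parts for the mixed Young--rough iterated integral --- all of which are, however, standard in the Gaussian rough path/Malliavin framework of \cite{CHLT15, CDL15}. One should also record that $\Phi$, $\Phi^{-1}$, $\zeta$ and hence the integrand of (\ref{eq:HDerSLI}) have moments of all orders, so that $h\mapsto D_{h}F$ is continuous on ${\cal H}$ and $DF$ is a genuine ${\cal H}$-valued random variable.
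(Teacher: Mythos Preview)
Your proposal is correct and follows essentially the same route as the paper: derive the linearised RDE for $D_hX_t$, solve it by variation of constants via the Jacobian $\Phi_t$, obtain the intermediate expression $\int_0^T d(\phi_iV_\alpha^i)(X_t)\cdot D_hX_t\,dw_t^\alpha+\int_0^T(\phi\cdot V_\alpha)(X_t)\,dh_t^\alpha$, and then integrate by parts in the first term. The only cosmetic difference is that the paper differentiates $\int\phi(dX)$ by the product rule on $\phi(X_t)$ and $dX_t$ before substituting $dX_t=V_\alpha\,dw_t^\alpha$, whereas you substitute first and then differentiate; the resulting computations coincide.
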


\begin{proof}
To simplify notation we will omit the dependence on $w$. By the definition
of $D_{h}F(w),$ we have
\begin{align*}
D_{h}F(w) & =\frac{d}{d\varepsilon}\left|_{\varepsilon=0}\right.\int_{0}^{T}\phi_{i}(X_{t}(w+\varepsilon h))dX_{t}^{i}(w+\varepsilon h)\\
 & =\int_{0}^{T}\frac{\partial\phi_{i}}{\partial x^{j}}(X_{t})D_{h}X_{t}^{j}dX_{t}^{i}+\int_{0}^{1}\phi_{i}(X_{t})dD_{h}X_{t}^{i}.
\end{align*}
By differentiating the SDE (\ref{eq:MainRDE}) along the direction
$h$, it is seen that $D_{h}X_{t}$ satisfies the differential equation
\begin{equation}
dD_{h}X_{t}^{i}=\frac{\partial V_{\alpha}^{i}}{\partial x^{j}}(X_{t})D_{h}X_{t}^{j}dw_{t}^{\alpha}+V_{\alpha}^{i}(X_{t})dh_{t}^{\alpha}.\label{eq:RDEDhX}
\end{equation}
As a result, we have
\begin{align}
D_{h}F(w) & =\int_{0}^{T}\frac{\partial\phi_{i}}{\partial x^{j}}(X_{t})D_{h}X_{t}^{j}V_{\alpha}^{i}(X_{t})dw_{t}^{\alpha}\nonumber \\
 & \ \ \ +\int_{0}^{T}\phi_{i}(X_{t})\big(\frac{\partial V_{\alpha}^{i}}{\partial x^{j}}(X_{t})D_{h}X_{t}^{j}dw_{t}^{\alpha}+V_{\alpha}^{i}(X_{t})dh_{t}^{\alpha}\big)\nonumber \\
 & =\int_{0}^{T}\frac{\partial}{\partial x^{j}}(\phi_{i}V_{\alpha}^{i})(X_{t})D_{h}X_{t}^{j}dw_{t}^{\alpha}+\int_{0}^{1}\phi_{i}(X_{t})V_{\alpha}^{i}(X_{t})dh_{t}^{\alpha}.\label{eq:HDerSLIPf}
\end{align}
On the other hand, the Jabocian $\Phi_{t}$ satisfies the homogeneous
linear equation 
\[
d\Phi_{t}=DV_{\alpha}(X_{t})\Phi_{t}dw_{t}^{\alpha},\ \Phi_{0}={\rm Id}.
\]
By the variational principle, it is standard that 
\begin{equation}
D_{h}X_{t}=\Phi_{t}\int_{0}^{t}\Phi_{s}^{-1}V_{\alpha}(X_{s})dh_{s}^{\alpha}.\label{eq:MDerviaJac}
\end{equation}
By using the formula (\ref{eq:MDerviaJac}) and integration by parts,
the first integral in (\ref{eq:HDerSLIPf}) can be written as
\begin{align*}
 & \int_{0}^{T}\big(d\zeta_{t}\cdot\int_{0}^{t}\Phi_{s}^{-1}V_{\alpha}(X_{s})dh_{s}^{\alpha}\big)\\
 & =\zeta_{T}\cdot\int_{0}^{T}\Phi_{s}^{-1}V_{\alpha}(X_{s})dh_{s}^{\alpha}-\int_{0}^{T}\zeta_{t}\Phi_{t}^{-1}V_{\alpha}(X_{t})dh_{t}^{\alpha}\\
 & =\int_{0}^{T}(\zeta_{T}-\zeta_{t})\Phi_{t}^{-1}V_{\alpha}(X_{t})dh_{t}^{\alpha},
\end{align*}
where $\zeta_{t}$ is the integral path defined by (\ref{eq:ZetaPath}).
The equation (\ref{eq:HDerSLI}) thus follows.
\end{proof}
\vspace{2mm}\noindent \textbf{A technical remark}. In the above proof,
we have performed pathwise integration essentially using principles
of ordinary calculus. This type of calculations can all be made rigorous
under the framework of rough path theory (cf. \cite{CHLT15,Ina14,FH14}
in which such type of calculation was used frequently and justified
carefully). For instance, the integral on the right hand side of (\ref{eq:HDerSLI})
is understood in the sense of Young, due to a variational embedding
theorem for the Cameron Martin space ${\cal H}$ proved by Friz-Victoir
(cf. \cite{FH14}). Another example is that the path $\zeta_{t}$
can be understood in the sense of RDE, namely the last component of
the triple $\Xi_{t}\triangleq(X_{t},\Phi_{t},\zeta_{t})$ which is
defined through the RDE 
\[
\begin{cases}
dX_{t}=V_{\alpha}(X_{t})dB_{t}^{\alpha},\\
d\Phi_{t}=DV_{\alpha}(X_{t})\Phi_{t}dB_{t}^{\alpha},\\
d\zeta_{t}=d(\phi\cdot V_{\alpha})(X_{t})\cdot\Phi_{t}dB_{t}^{\alpha}.
\end{cases}
\]
The assumption of $\phi\in C_{p}^{\infty}$ ensures that the stochastic
line integral $F$ has moments of all orders (indeed smooth in the
sense of Malliavin), due to standard estimates of rough integrals
and the exponential integrability of the $p$-variation of $B$ ($p>1/H$).
This is seen in exactly the same way as in \cite{Ina14}. Throughout
the rest, we will perform pathwise differential calculus of similar
kind without further justification.

\vspace{2mm} The following theorem is a standard regularity result
in the Malliavin calculus which will be used in Section \ref{sec:SLI}.
Its proof can be found in \cite{Nua06}.
\begin{thm}
\label{thm:Malliavin1D}Let $F$ be a twice differentiable random
variable on ${\cal W}$ (in the sense of Malliavin) and $F,DF,D^{2}F\in L^{p}$
for some $p>1$. Then conditional on the event $\{DF\neq0\}$, the
distribution of $F$ is absolutely continuous with respect to the
Lebesgue measure on $\mathbb{R}$.
\end{thm}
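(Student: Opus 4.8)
\noindent\textit{Proof proposal.} The plan is to prove the equivalent assertion that the sub‑probability measure $A\mapsto\mathbb{P}(F\in A,\ DF\neq0)$ on $\mathbb{R}$ is absolutely continuous with respect to Lebesgue measure, since the conditional law given $\{DF\neq0\}$ is merely its renormalisation (and the statement is vacuous if $\mathbb{P}(DF\neq0)=0$). Because $\{DF\neq0\}=\{\|DF\|_{\mathcal{H}}>0\}=\bigcup_{\varepsilon>0}\{\|DF\|_{\mathcal{H}}^{2}\geqslant\varepsilon\}$ is an increasing union, monotone convergence reduces everything to showing that for each fixed $\varepsilon>0$ and each Lebesgue‑null set $N\subseteq\mathbb{R}$ one has $\mathbb{P}\big(F\in N,\ \|DF\|_{\mathcal{H}}^{2}\geqslant\varepsilon\big)=0$.

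Next I would localise. Fix $\varepsilon>0$, choose a smooth cut‑off $\psi:[0,\infty)\to[0,1]$ with $\psi\equiv1$ on $[\varepsilon,\infty)$ and $\psi\equiv0$ on $[0,\varepsilon/2]$, set $G\triangleq\psi(\|DF\|_{\mathcal{H}}^{2})$, and introduce the finite positive measure $\mu_{G}$ on $\mathbb{R}$ given by $\mu_{G}(A)\triangleq\mathbb{E}[\mathbf{1}_{A}(F)G]$. Since $\mathbf{1}_{\{\|DF\|_{\mathcal{H}}^{2}\geqslant\varepsilon\}}\leqslant G$, it suffices to prove $\mu_{G}\ll$ Lebesgue, and for this I would establish the a priori bound
\[
\big|\mathbb{E}[\varphi'(F)\,G]\big|\leqslant C_{\varepsilon}\,\|\varphi\|_{\infty},\qquad\varphi\in C_{c}^{\infty}(\mathbb{R}),
\]
for some finite $C_{\varepsilon}$. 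The left‑hand side equals $\big|\int_{\mathbb{R}}\varphi'\,d\mu_{G}\big|$, so the bound says the distributional derivative of $\mu_{G}$ extends to a bounded functional on $C_{0}(\mathbb{R})$; by the Riesz representation theorem it is a finite signed measure, and the standard one‑dimensional argument then yields a density (of bounded variation) for $\mu_{G}$. Granting the bound, $\mathbb{P}(F\in N,\|DF\|_{\mathcal{H}}^{2}\geqslant\varepsilon)\leqslant\mu_{G}(N)=0$, and letting $\varepsilon\downarrow0$ finishes the proof.

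The heart of the matter is the a priori bound, which I would obtain by Malliavin integration by parts. Let $f(t)\triangleq\psi(t)/t$, a bounded smooth function on $[0,\infty)$ with $t\,f'(t)$ also bounded (both by constants depending only on $\varepsilon$ and $\psi$, because $\psi$ vanishes near $0$), and set $u\triangleq f(\|DF\|_{\mathcal{H}}^{2})\,DF$. Then $u$ is a bounded $\mathcal{H}$‑valued random variable, and its Malliavin derivative
\[
Du=2f'(\|DF\|_{\mathcal{H}}^{2})\,\langle D^{2}F,DF\rangle_{\mathcal{H}}\otimes DF+f(\|DF\|_{\mathcal{H}}^{2})\,D^{2}F
\]
is dominated in $\mathcal{H}\otimes\mathcal{H}$‑norm by $C_{\varepsilon}\,\|D^{2}F\|_{\mathcal{H}\otimes\mathcal{H}}$ — this is the sole place where the hypothesis $D^{2}F\in L^{p}$ is used, and it gives $u\in\mathbb{D}^{1,p}(\mathcal{H})\subseteq\mathrm{Dom}\,\delta$ with $\delta(u)\in L^{p}$ by Meyer's inequality. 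On $\{DF=0\}$ both $u$ and $G$ vanish; off that set $\langle DF,u\rangle_{\mathcal{H}}=\psi(\|DF\|_{\mathcal{H}}^{2})=G$; hence $\langle D(\varphi(F)),u\rangle_{\mathcal{H}}=\varphi'(F)\langle DF,u\rangle_{\mathcal{H}}=\varphi'(F)\,G$ identically. The duality between $D$ and $\delta$ then yields $\mathbb{E}[\varphi'(F)G]=\mathbb{E}[\varphi(F)\,\delta(u)]$, whence $|\mathbb{E}[\varphi'(F)G]|\leqslant\mathbb{E}[|\delta(u)|]\,\|\varphi\|_{\infty}$, which is the required bound with $C_{\varepsilon}=\mathbb{E}[|\delta(u)|]$.

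The step I expect to require the most care is the construction of $u$ as an element of $\mathrm{Dom}\,\delta$ with $\delta(u)$ integrable, together with the legitimacy of the integration‑by‑parts identity under the modest integrability assumed ($F,DF,D^{2}F\in L^{p}$ only, possibly with $p<2$): the former is handled by the lower truncation at level $\varepsilon$, which turns the singular factor $1/\|DF\|_{\mathcal{H}}^{2}$ into a bounded weight, together with the hypothesis on $D^{2}F$; the latter is handled by a routine density argument, approximating $F$ by smooth cylindrical functionals and using that $\varphi(F)$ is bounded to pass to the limit. The remaining ingredients — the monotone‑convergence reduction, the Riesz representation step, and Meyer's $L^{p}$‑continuity of $\delta$ — are standard; the whole argument is that of \cite{Nua06}.
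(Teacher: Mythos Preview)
The paper does not supply its own proof of this statement; it simply cites \cite{Nua06}. Your proposal is the standard argument from that reference---localise to $\{\|DF\|_{\mathcal H}^{2}\geqslant\varepsilon\}$ via a smooth cut-off, integrate by parts against $u=f(\|DF\|_{\mathcal H}^{2})\,DF$ with $f(t)=\psi(t)/t$, and deduce absolute continuity from the resulting bound $|\mathbb{E}[\varphi'(F)G]|\leqslant C_{\varepsilon}\|\varphi\|_{\infty}$---so it is correct and exactly in line with what the paper intends.
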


\subsection{Some terminology from differential geometry}

The notion of RDEs, stochastic line integrals as well as our non-degeneracy
criteria in Section \ref{sec:SLI} are intrinsic properties, in the
sense that they are defined in terms of the underlying vector fields
and one-forms. In particular, they are independent of the choice of
local coordinates or embedding of the state manifold into an ambient
Euclidean space. It is thus beneficial to perform some of the analysis
in geometric terms. The benefit is particularly clear in the hypoelliptic
analysis developed in Section \ref{subsec:HypoCase}. In this section,
we recall some notation from differential geometry that will be used
later on (cf. Chern-Chen-Lam \cite{CCL00}).

Let $M$ be a differentiable manifold. We denote $\Omega^{k}(M)$
($0\leqslant k\leqslant n$) as the space of (smooth) $k$-forms on
$M$. Given a (smooth) vector field $X$, the \textit{interior product}
$i(X):\Omega^{k}(M)\rightarrow\Omega^{k-1}(M)$ is defined by 
\begin{equation}
(i(X)\omega)(Y_{1},\cdots,Y_{k-1})\triangleq\omega(X,Y_{1},\cdots,Y_{k-1}).\label{eq:IntProd}
\end{equation}
The \textit{Lie derivative} $L_{X}:\Omega^{k}(M)\rightarrow\Omega^{k}(M)$
is defined by 
\begin{equation}
(L_{X}\omega)(Y_{1},\cdots,Y_{k})\triangleq X(\omega(Y_{1},\cdots,Y_{k}))-\sum_{i=1}^{k}\omega(Y_{1},\cdots,Y_{i-1},[X,Y],Y_{i+1},\cdots,Y_{k}).\label{eq:LieDer}
\end{equation}
Here $Y_{1},\cdots,Y_{k-1}$ are arbitrary vector fields, a $k$-form
is viewed as an antisymmetric $k$-linear functional on vector fields
and $Xf$ is the directional derivative of $f$ along $X$. These
two operators are related through the so-called \textit{Cartan's identity}:
\begin{equation}
d\circ i(X)+i(X)\circ d=L_{X},\label{eq:Cartan}
\end{equation}
where $d:\Omega^{k}(M)\rightarrow\Omega^{k+1}(M)$ is the exterior
derivative operator. We also recall that the \textit{exterior product}
of two one-forms $\alpha,\beta$ is defined by 
\[
\alpha\wedge\beta(X,Y)=\alpha(X)\beta(Y)-\beta(X)\alpha(Y)
\]
and the exterior derivative of one-form $\alpha$ has the following
characterisation:
\begin{equation}
d\alpha(X,Y)=X(\alpha(Y))-Y(\alpha(X))-\alpha([X,Y]),\label{eq:2Form}
\end{equation}
where $X,Y$ are arbitrary vector fields. A one-form $\alpha$ is
\textit{closed} if $d\alpha=0$. It is \textit{exact} if $\alpha=df$
for some smooth function $f.$ Every exact form is closed, and on
$\mathbb{R}^{n}$ the converse is also true. Throughout the rest,
we will simply write $\alpha\cdot X$ for the pairing $\alpha(X)$
which is also consistent with matrix notation in the Euclidean case.

It is convenient to re-interpret Lemma \ref{lem:HDerSLI} in geometric
terms. For instance, the Jacobian $\Phi_{t}:T_{x_{0}}M\rightarrow T_{X_{t}}M$
is the linear isomorphism that pushes tangent vectors at $x_{0}$
forward along the solution path by the flow of diffeomorphisms associated
with the RDE (\ref{eq:MainRDE}). The Malliavin derivative $t\mapsto D_{h}X_{t}\in T_{X_{t}}M$
is a path on the tangent bundle (cf. (\ref{eq:MDerviaJac})). The
formula (\ref{eq:HDerSLI}) can be expressed as 
\begin{equation}
D_{h}F(w)=\int_{0}^{T}\big(\int_{t}^{T}\langle\Phi_{s}^{*}d(\phi\cdot V_{\beta})(X_{s}),\Phi_{t}^{-1}V_{\alpha}(X_{t})\rangle dw_{s}^{\beta}+(\phi\cdot V_{\alpha})(X_{t})\big)dh_{t}^{\alpha},\label{eq:IntrinsicHDerSLI}
\end{equation}
where $\langle\cdot,\cdot\rangle$ denotes the pairing between cotangent
and tangent vectors at the starting point $x_{0}$. Equation (\ref{eq:IntrinsicHDerSLI})
clearly has an intrinsic meaning.

Finally, we recall an equation for the pull-back of vector fields
by the Jacobian. This equation, which played an essential role in
the proof of H\"ormander's theorem for RDEs (cf. \cite{CF10,CHLT15}),
will also be crucial for our analysis. Its proof is a straight forward
application of the chain rule.
\begin{lem}
\label{lem:Pullback}Let $W$ be a smooth vector field on $M$. Then
the path $t\mapsto\Phi_{t}^{-1}W(X_{t})\in T_{x_{0}}M$ satisfies
the equation 
\[
\Phi_{t}^{-1}W(X_{t})=W(x_{0})+\int_{0}^{t}\Phi_{s}^{-1}[V_{\alpha},W](X_{s})dw_{s}^{\alpha}.
\]
\end{lem}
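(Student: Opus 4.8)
The plan is to obtain the stated integral equation by differentiating the path $t\mapsto\Phi_{t}^{-1}W(X_{t})$ directly, using the pathwise product and chain rules in exactly the informal manner described in the technical remark following Lemma \ref{lem:HDerSLI}; all of $X$, $\Phi$, $\Phi^{-1}$ and $W(X)$ are components of a single RDE driven by $w$, so ordinary calculus applies without correction terms.

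First I would derive the equation for the inverse Jacobian. Differentiating the identity $\Phi_{t}\Phi_{t}^{-1}={\rm Id}$ and substituting $d\Phi_{t}=DV_{\alpha}(X_{t})\Phi_{t}\,dw_{t}^{\alpha}$ gives
\[
d\big(\Phi_{t}^{-1}\big)=-\Phi_{t}^{-1}\big(d\Phi_{t}\big)\Phi_{t}^{-1}=-\Phi_{t}^{-1}DV_{\alpha}(X_{t})\,dw_{t}^{\alpha}.
\]
Next, the chain rule applied to $t\mapsto W(X_{t})$ together with the RDE (\ref{eq:MainRDE}) gives $d\big(W(X_{t})\big)=DW(X_{t})V_{\alpha}(X_{t})\,dw_{t}^{\alpha}$. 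Combining the two through the product rule,
\[
d\big(\Phi_{t}^{-1}W(X_{t})\big)=\Phi_{t}^{-1}\big(DW(X_{t})V_{\alpha}(X_{t})-DV_{\alpha}(X_{t})W(X_{t})\big)\,dw_{t}^{\alpha}.
\]
The final step is to recognise the bracketed expression: in the matrix notation of the paper the Lie bracket reads $[V_{\alpha},W]=DW\cdot V_{\alpha}-DV_{\alpha}\cdot W$, so the integrand equals $\Phi_{t}^{-1}[V_{\alpha},W](X_{t})\,dw_{t}^{\alpha}$. Integrating from $0$ to $t$ with the initial value $\Phi_{0}^{-1}W(X_{0})=W(x_{0})$ (since $\Phi_{0}={\rm Id}$) yields the assertion.

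There is essentially no obstacle beyond bookkeeping. The only points worth a remark are that the product and chain rules used above are legitimate in the rough-path sense (justified as in \cite{CHLT15,Ina14,FH14}), and that, although the intermediate quantities $DV_{\alpha}$, $DW$ and $\Phi_{t}^{-1}$ are not intrinsic, the resulting formula is coordinate-free: $\Phi_{t}^{-1}$ is the inverse of the differential of the flow, the chain rule for $W(X_{t})$ is intrinsic, and $[V_{\alpha},W]$ is the geometric Lie bracket, so both sides have the declared meaning as paths in $T_{x_{0}}M$.
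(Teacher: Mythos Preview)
Your argument is correct and is precisely the ``straightforward application of the chain rule'' that the paper alludes to; the paper does not spell out the computation, but the product-rule differentiation of $\Phi_t^{-1}W(X_t)$ together with the identification $[V_\alpha,W]=DW\cdot V_\alpha-DV_\alpha\cdot W$ is exactly what is intended.
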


\section{\label{sec:SLI}Non-degeneracy criteria for stochastic line integrals}

In this section, we establish quantitative criteria for the non-degeneracy
(i.e. existence of density) of stochastic/rough line integrals (extended
signatures) of the form

\[
\int_{0<t_{1}<\cdots<t_{m}<T}\phi_{1}(dX_{t_{1}})\cdots\phi_{m}(dX_{t_{m}}),
\]
where $X_{t}\in M$ is the solution to the RDE (\ref{eq:MainRDE})
and $\phi_{1},\cdots,\phi_{m}$ are $C_{p}^{\infty}$ one-forms. Our
results hold when $M$ is either $\mathbb{R}^{n}$ or a (compact)
manifold, but we will only work with the case when $M=\mathbb{R}^{n}$
and the vector fields $V_{\alpha}\in C_{b}^{\infty}.$ Extension of
the argument to the manifold case is routine by either working in
local charts or embedding $M$ into an ambient Euclidean space. To
illustrate the idea better, we first consider the case when $m=1$
and then extend the analysis to the case of iterated integrals.

\subsection{\label{subsec:SingleLI}Single line integrals}

We begin by considering a single line integral $F\triangleq\int_{0}^{T}\phi(dX_{t}).$
We divide the discussion into two cases: elliptic and hypoelliptic.
In the elliptic case, the result is particularly simple and neat,
while the hypoelliptic case requires a strong condition as well as
more delicate analysis.

\subsubsection{The elliptic case}

By \textit{ellipticity}, we assume that $n=d$ and the vector fields
$V_{1},\cdots,V_{d}$ in the RDE (\ref{eq:MainRDE}) linearly span
$\mathbb{R}^{d}$ at every point. In the introduction, we saw that
the line integral $F$ may fail to have a density if the one-form
$\phi$ is closed. In the elliptic case, it turns out that non-closedness
is essentially sufficient for the non-degeneracy of $F$. The main
result is stated as follows.
\begin{thm}
\label{thm:Ellip}Let $\phi$ be a $C_{p}^{\infty}$ one-form on $\mathbb{R}^{d}$.
Suppose that $d\phi\neq0$ for almost everywhere inside the support
of $\phi$. Let $E$ denote the event that ``$X_{t}$ enters the
interior of ${\rm supp}\phi$ at some time $t$''. Then conditional
on $E,$ the line integral $F$ has a density with respect to the
Lebesgue measure.
\end{thm}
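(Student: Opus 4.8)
The plan is to apply the Malliavin regularity criterion (Theorem \ref{thm:Malliavin1D}): it suffices to show that, conditional on the event $E$, we have $D_hF\neq0$ for at least one Cameron--Martin direction $h$, i.e. $DF\neq0$ a.s. on $E$. By Lemma \ref{lem:HDerSLI} (in its intrinsic form \eqref{eq:IntrinsicHDerSLI}), $D_hF(w)$ is a Young integral $\int_0^T g_t^\alpha(w)\,dh_t^\alpha$ against $h$, where
\[
g_t^\alpha(w)=\int_t^T\langle\Phi_s^*d(\phi\cdot V_\beta)(X_s),\Phi_t^{-1}V_\alpha(X_t)\rangle\,dw_s^\beta+(\phi\cdot V_\alpha)(X_t).
\]
Since the Cameron--Martin space $\mathcal H$ of fBM with $H>1/4$ embeds into the space of paths of finite $q$-variation for a suitable $q$ and is dense enough that the integral pairing detects any non-trivial continuous integrand, $DF=0$ forces $g_t^\alpha(w)\equiv 0$ in $t$ for every $\alpha$ (up to the null set coming from the variational embedding). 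So the whole proof reduces to: on $E$, the process $t\mapsto\big(g_t^1(w),\dots,g_t^d(w)\big)$ is not identically zero.

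The key computation is to differentiate $t\mapsto g_t^\alpha(w)$ and use the ellipticity. Write $\zeta_t$ as in \eqref{eq:ZetaPath}, so $g_t^\alpha=(\zeta_T-\zeta_t)\Phi_t^{-1}V_\alpha(X_t)+(\phi\cdot V_\alpha)(X_t)$. Suppose for contradiction $g_t^\alpha\equiv0$ for all $\alpha$ and all $t$ on a positive-probability sub-event of $E$. At $t=T$ this gives $(\phi\cdot V_\alpha)(X_T)=0$ for all $\alpha$; by ellipticity the $V_\alpha(X_T)$ span the tangent space, so $\phi(X_T)=0$. Next, using Lemma \ref{lem:Pullback} (the pull-back equation $\Phi_t^{-1}V_\alpha(X_t)=V_\alpha(x_0)+\int_0^t\Phi_s^{-1}[V_\beta,V_\alpha](X_s)\,dw_s^\beta$) and the defining RDE for $\zeta$, one can take the rough differential of the identity $g^\alpha_\cdot\equiv0$. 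The terms involving $d\zeta_t\cdot\Phi_t^{-1}V_\alpha(X_t)$ combine—via the identity $d\zeta_t=d(\phi\cdot V_\beta)(X_t)\Phi_t\,dw_t^\beta$ and the product rule—into an expression whose $dw^\beta$-coefficient is, after contracting with the pull-back, precisely $d\phi(V_\beta,V_\alpha)(X_t)$ plus terms already known to vanish; in other words, differentiating $g^\alpha\equiv 0$ and isolating the $dw^\beta_t$ part yields $d\phi(V_\beta,V_\alpha)(X_t)=0$ for all $\alpha,\beta$ and all $t$. (This is exactly where the Cartan-type algebra $d\phi(V_\beta,V_\alpha)=V_\beta(\phi\cdot V_\alpha)-V_\alpha(\phi\cdot V_\beta)-\phi\cdot[V_\beta,V_\alpha]$ enters.) By ellipticity, $\{V_\alpha\wedge V_\beta\}$ spans $\Lambda^2 T_x\mathbb R^d$, so $d\phi(X_t)=0$ for every $t$.

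Finally I combine this with the event $E$: on $E$ there is a time $\tau$ with $X_\tau\in(\mathrm{supp}\,\phi)^\circ$; then $t\mapsto X_t$ spends a set of positive Lebesgue measure (indeed an interval around $\tau$) inside $(\mathrm{supp}\,\phi)^\circ$, and along that interval $d\phi(X_t)=0$. But the solution map is non-degenerate—under ellipticity $X_t$ has a density, and more to the point the occupation measure of $X$ on $(\mathrm{supp}\,\phi)^\circ$ is a.s. absolutely continuous with respect to Lebesgue on an open set where it charges—so $d\phi(X_t)=0$ for $t$ in a set of positive measure forces $d\phi=0$ on a subset of $(\mathrm{supp}\,\phi)^\circ$ of positive Lebesgue measure, contradicting the hypothesis $d\phi\neq0$ a.e.\ on $\mathrm{supp}\,\phi$. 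Hence $DF\neq0$ a.s.\ on $E$, and Theorem \ref{thm:Malliavin1D} gives the conditional density. The main obstacle is the middle step: carefully extracting, from the rough-path identity $g^\alpha_\cdot\equiv0$, the successive algebraic consequences $\phi(X_T)=0$ and then $d\phi(X_t)\equiv0$, which requires matching coefficients of the driving signal $dw$ rigorously in the Young/rough sense (using the pull-back lemma to keep everything intrinsic) and controlling the null sets introduced by the Cameron--Martin variational embedding.
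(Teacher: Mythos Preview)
Your overall strategy matches the paper's: assume $DF(w)=0$, use Lemma~\ref{lem:HDerSLI} together with Lemma~\ref{lem:fBMProp}(i) to force the integrand $g_t^\alpha$ to vanish identically, then differentiate and use ellipticity to deduce $d\phi(X_t(w))=0$ for all $t$. Two points deserve comment.

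First, a minor one. When you ``match coefficients of the driving signal $dw$'', you are implicitly using a non-trivial fact: that $\int_0^\cdot y_s\,dB_s\equiv 0$ forces $y\equiv 0$. This is Lemma~\ref{lem:fBMProp}(ii) (true roughness of fBM) and should be cited explicitly; it is not automatic from rough path theory alone. Also, to replace the bracket term $(\zeta_T-\zeta_t)\Phi_t^{-1}[V_\beta,V_\alpha]$ by $-\phi\cdot[V_\beta,V_\alpha]$ you must already have inverted the relation $g^\alpha\equiv 0$ using ellipticity (i.e.\ deduced $(\zeta_T-\zeta_t)\Phi_t^{-1}=-\phi(X_t)$ as covectors). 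The paper does this inversion first and then differentiates the covector identity; your ``plus terms already known to vanish'' hides exactly this step.

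Second, and this is a genuine gap: your final contradiction via the occupation measure does not work. You claim that the occupation measure of $X$ is absolutely continuous with respect to Lebesgue on $(\mathrm{supp}\,\phi)^\circ$, so that $d\phi(X_t)=0$ on a $t$-interval forces $d\phi=0$ on a set of positive Lebesgue measure in $\mathbb{R}^d$. This is false in general: for $d\geqslant 2$ the image $\{X_t(w):t\in[0,T]\}$ of a single sample path typically has Lebesgue measure zero (e.g.\ the range of planar Brownian motion has Lebesgue measure zero), so the pathwise constraint $d\phi(X_t(w))=0$ carries no information about Lebesgue-a.e.\ behaviour of $d\phi$. The argument must be \emph{probabilistic in $w$}, not pathwise. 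The paper's fix is clean: set $\Lambda\triangleq\{x\in(\mathrm{supp}\,\phi)^\circ:d\phi(x)=0\}$, which is Lebesgue-null by hypothesis. Since $X_r$ has a density for each fixed $r>0$ (this is where ellipticity is used a second time), $\mathbb{P}(X_r\in\Lambda)=0$; taking a countable union over rationals $r$, the event $N_2\triangleq\bigcup_{r\in\mathbb{Q}\cap[0,T]}\{X_r\in\Lambda\}$ is $\mathbb{P}$-null. On $E\setminus N_2$ there is a rational $r$ with $X_r\in(\mathrm{supp}\,\phi)^\circ$, and if also $DF(w)=0$ one would have $X_r(w)\in\Lambda$, a contradiction. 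Replace your occupation-measure paragraph with this fixed-time/density argument and the proof goes through.
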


Our proof of Theorem \ref{thm:Ellip}, as well as its hypoelliptic
counterpart, relies crucially on the following two properties of fBM.
Its proof is contained in \cite{CHLT15}.
\begin{lem}
\label{lem:fBMProp}(i) Let $f=(f_{1},\cdots,f_{d}):[0,T]\rightarrow\mathbb{R}^{d}$
be a deterministic path such that $\int_{0}^{T}f_{t}dh_{t}$ is well-defined
in the sense of Young for all $h\in{\cal H}$. If $\int_{0}^{T}f_{\alpha}(t)dh_{t}^{\alpha}=0$
for all $h\in{\cal H}$, then $f\equiv0$.\\
(ii) The fBM is a.s. truly rough in the sense of \cite{FH14}. As
a result, with probability one we have 
\[
\int_{0}^{t}y_{s}dB_{s}=0\ \forall t\in[0,T]\implies y\equiv0,
\]
whenever $y$ is a rough path controlled by $B$ so that the rough
integral is well-defined.
\end{lem}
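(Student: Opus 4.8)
The statement splits into two independent assertions about fractional Brownian motion, and I would treat them separately. Part (i) is a \emph{dual} non-degeneracy of the Cameron--Martin space: no nonzero deterministic integrand can be annihilated by Young-pairing against every $h\in{\cal H}$. Part (ii) is the pathwise \emph{true roughness} of the driving signal, from which uniqueness of controlled integrands follows. Both are properties of fBM recorded in \cite{CHLT15}; below I outline self-contained arguments.

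For part (i) the plan is to first reduce to a scalar statement by feeding single-component chaos elements into the hypothesis. Taking $Z=B_{s}^{\beta}$ (a first-chaos element) produces the Cameron--Martin path $h_{t}^{\alpha}=\mathbb{E}[B_{s}^{\beta}B_{t}^{\alpha}]=\delta^{\alpha\beta}R(s,t)$, where $R(s,t)=\tfrac{1}{2}(s^{2H}+t^{2H}-|s-t|^{2H})$ is the fBM covariance. Substituting into $\int_{0}^{T}f_{\alpha}\,dh^{\alpha}=0$ yields, for every $\beta$ and every $s\in[0,T]$,
\[
g_{\beta}(s):=\int_{0}^{T}f_{\beta}(t)\,d_{t}R(s,t)=0.
\]
I would then recognise $g_{\beta}(s)$ as the covariance $\mathbb{E}\big[(\int_{0}^{T}f_{\beta}\,dB^{\beta})\,B_{s}^{\beta}\big]$ of the Wiener integral of $f_{\beta}$ with $B_{s}^{\beta}$. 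Since $g_{\beta}\equiv0$ and the components of $B$ are independent, the first-chaos element $\int_{0}^{T}f_{\beta}\,dB^{\beta}$ is orthogonal to every $B_{s}^{\gamma}$, hence to the whole first chaos; being itself an element of the first chaos it must vanish in $L^{2}$. Non-degeneracy of the fBM covariance form then forces $f_{\beta}\equiv0$. Rigorously, the identity for $g_{\beta}$ and the fact that $\int f_{\beta}\,dB^{\beta}$ is a genuine chaos element rest on the description of ${\cal H}$ as $K_{H}(L^{2})$ via the fBM kernel together with the transfer formula $\int_{0}^{T}f\,dh=\langle K_{H}^{*}f,\dot{g}\rangle_{L^{2}}$; injectivity of $K_{H}^{*}$ is the clean restatement of the conclusion. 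The stated Young-integrability of $\int f\,dh$ for all $h\in{\cal H}$ is precisely the regularity needed to make these pairings legitimate.

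For part (ii) I would invoke the notion of true roughness from \cite{FH14}: $B$ is $\theta$-H\"older rough if there is a constant $L>0$ with $\limsup_{t\to s}|\langle\varphi,B_{s,t}\rangle|/|t-s|^{\theta}\geq L$ for every $s$ and every unit covector $\varphi$. Because fBM is exactly $H$-regular and locally non-deterministic, it is a.s. $\theta$-H\"older rough for every $\theta>H$; this is the fact cited from \cite{CHLT15,FH14}. Given $\int_{0}^{t}y_{s}\,dB_{s}=0$ for all $t$, one has $\int_{s}^{t}y_{u}\,dB_{u}=0$ for all $s\leqslant t$, so the controlled-rough-path expansion gives $y_{s}B_{s,t}=-y_{s}'\mathbb{B}_{s,t}+O(|t-s|^{3\alpha})=O(|t-s|^{2\alpha})$, where $\alpha<H$ is the H\"older exponent of $B$. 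Fixing $\theta\in(H,2\alpha)$ --- possible since $\alpha$ may be taken arbitrarily close to $H$ and $2\alpha>H$ holds throughout $H>1/4$ --- and dividing by $|t-s|^{\theta}$ sends the right-hand side to $0$ as $t\to s$, contradicting $\limsup|\langle y_{s}/|y_{s}|,B_{s,t}\rangle|/|t-s|^{\theta}\geqslant L>0$ unless $y_{s}=0$. Hence $y_{s}=0$ at every $s$, i.e. $y\equiv0$. In the lower range of $H$ where the rough path must be lifted to third level, the same expansion with one further Taylor term applies verbatim.

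The main obstacle is part (i): in the regime $H<1/2$ the Cameron--Martin paths are themselves rough, so the integration-by-parts transfer formula and the precise domain of $K_{H}^{*}$ require care, and one must verify that the Young-integrability hypothesis places $f$ in a space on which the fBM covariance form is genuinely non-degenerate. Part (ii) is comparatively routine once true roughness is granted; its only delicate point is the bookkeeping of the exponents $\theta,\alpha$ across the range $H>1/4$, ensuring that $2\alpha>\theta>H$ remains available when higher-level expansions are needed.
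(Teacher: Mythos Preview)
The paper does not supply its own proof of this lemma: it simply records that ``its proof is contained in \cite{CHLT15}'' and adds in the subsequent remark that both properties are consequences of a nondeterminism-type condition verified for fBM in that reference. There is therefore no detailed argument in the paper to compare against.

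Your proposal goes further than the paper by actually sketching the underlying mechanisms, and the sketches are sound. For (i), reducing to a single coordinate via $h_t^{\alpha}=\delta^{\alpha\beta}R(s,t)$, identifying $\int_0^T f_\beta\,d_tR(s,\cdot)$ as the covariance of the Wiener integral with $B_s^{\beta}$, and then invoking injectivity of $K_H^{*}$ is exactly the standard route and is consistent with the Cameron--Martin analysis in \cite{CHLT15,FH14}. Your caveat about the $H<1/2$ regime (regularity of $h$, domain of $K_H^{*}$) is the right place to be careful, and this is precisely what the nondeterminism condition in \cite{CHLT15} packages up. For (ii), your argument is the textbook one from \cite{FH14}: true roughness at some exponent $\theta>H$ combined with the controlled-path remainder estimate $y_sB_{s,t}=O(|t-s|^{2\alpha})$ forces $y_s=0$. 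The exponent bookkeeping $H<\theta<2\alpha$ with $\alpha$ close to $H$ is fine; your remark that the level-3 case ($1/4<H\leqslant 1/3$) requires one more term in the expansion but no new idea is also correct.

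In short: the paper defers entirely to \cite{CHLT15}, while you have unpacked the cited arguments. What you wrote is a faithful outline of how those references establish the lemma, with no genuine gaps beyond the technical verifications you already flag.
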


\begin{rem}
In \cite{CHLT15}, these two properties are implied by a nondeterminism-type
condition which was used by the authors to establish the smoothness
of density for the RDE solution. It was proved in the same paper that
fBM satisfies their nondeterminism condition.
\end{rem}

\begin{proof}[Proof of Theorem \ref{thm:Ellip}]According to Theorem
\ref{thm:Malliavin1D}, the point is to show that $E\subseteq\{DF\neq0\}$
modulo some $\mathbb{P}$-null set $N$. First of all, let $N_{1}\subseteq{\cal W}$
be a null set such that $w$ admits a canonical rough path lifting
and is truly rough (so that Lemma \ref{lem:fBMProp} (ii) holds) for
all $w\in N_{1}^{c}.$

Now suppose that $w\in E\cap N_{1}^{c}$ is a path such that $DF(w)=0.$
According to Lemma \ref{lem:HDerSLI} and Lemma \ref{lem:fBMProp}
(i), we have 
\begin{equation}
\big((\zeta_{T}-\zeta_{t})\cdot\Phi_{t}^{-1}+\phi(X_{t})\big)\cdot V_{\alpha}(X_{t})=0\ \ \ \forall t\in[0,T],\alpha=1,\cdots d\label{eq:EllipPf1}
\end{equation}
at the driving path $w,$ where $\zeta_{t}$ is defined by (\ref{eq:ZetaPath})
and $\Phi_{t}$ is the Jacobian of the RDE. Since the vector fields
are assumed to be elliptic, the matrix $V\triangleq(V_{1},\cdots,V_{d})$
is invertible everywhere. After multiplying (\ref{eq:EllipPf1}) by
$V(X_{t})^{-1}\Phi_{t}$, we obtain that 
\[
\zeta_{T}-\zeta_{t}+\phi(X_{t})\cdot\Phi_{t}=0.
\]
Recall from the equations for $X_{t}$ and $\Phi_{t}$ that 
\begin{equation}
d\big(\phi(X_{t})\cdot\Phi_{t}\big)=\big(\frac{\partial\phi}{\partial x^{i}}V_{\alpha}^{i}\big)(X_{t})\Phi_{t}+(\phi\cdot DV_{\alpha})(X_{t})\cdot\Phi_{t}\big)dw_{t}^{\alpha}.\label{eq:EllipPf2}
\end{equation}
In view of the definition of $\zeta_{t}$ and (\ref{eq:EllipPf2}),
Lemma \ref{lem:fBMProp} (ii) implies that 
\[
\big(-d(\phi_{i}V_{\alpha}^{i})+\frac{\partial\phi}{\partial x^{i}}V_{\alpha}^{i}+\phi\cdot DV_{\alpha}\big)(X_{t})=0\ \ \ \forall t\in[0,T],\ \alpha=1,\cdots,d.
\]
By taking the $j$-th component of this equation, it is seen that
\begin{equation}
\big(\frac{\partial\phi_{j}}{\partial x^{i}}-\frac{\partial\phi_{i}}{\partial x^{j}}\big)(X_{t})V_{\alpha}^{i}(X_{t})=0\ \ \ \forall t,\alpha,j.\label{eq:EllipPf3}
\end{equation}
By ellipticity, the equation (\ref{eq:EllipPf3}) is equivalent to
the property that $(d\phi)(X_{t})=0$ for all $t$. Note that this
property holds at the particular path $w$. To summarise, we have
shown that 
\begin{equation}
w\in E\cap N_{1}^{c},\ DF(w)=0\implies(d\phi)(X_{t}(w))=0\ \forall t\in[0,T].\label{eq:EllipPf}
\end{equation}
By continuity, this is particularly true for $t\in\mathbb{Q}\cap[0,T]$.

On the other hand, by the definition of $E$ and continuity, there
is a rational time $r$ such that $X_{r}(w)\in({\rm supp}\phi)^{\circ}$.
In addition, we know from \cite{CF10} that the law of $X_{r}$ is
absolutely continuous with respect to the Lebesgue measure. Since
$\Lambda\triangleq\{x\in({\rm supp}\phi)^{\circ}:(d\phi)(x)=0\}$
is a Lebesgue null set by assumption, we have

\[
\mathbb{P}\big(X_{r}\in\Lambda\big)=0,\ \ \ \forall r\in\mathbb{Q}\cap[0,T].
\]
In view of (\ref{eq:EllipPf}), by further excluding the $\mathbb{P}$-null
set 
\[
N_{2}\triangleq\bigcup_{r\in\mathbb{Q}\cap[0,T]}\{X_{r}\in\Lambda\},
\]
we conclude that 
\[
w\in E\backslash(N_{1}\cup N_{2})\implies DF(w)\neq0.
\]
The result thus follows from Theorem \ref{thm:Malliavin1D}.

\end{proof}

Examples that satisfy the assumptions of Theorem \ref{thm:Ellip}
are generic and easy to construct. 
\begin{example}
\label{exa:EllipExam}Let $h(t)\in C_{c}^{\infty}(\mathbb{R})$ be
a function such that 
\[
h(t)>0,\ t\in(-1,1);\ h(t)=0,\ t\notin(-1,1),
\]
and $h'(t)$ is everywhere nonzero in $(-1,1)$ except at $t=0$.
Define the following one-form on $\mathbb{R}^{2}$:
\[
\phi=h(x)h(y)e^{h(y)^{2}}dx.
\]
Then $\phi$ is supported on $[-1,1]^{2}$ and 
\[
d\phi=-h(x)h'(y)(1+2h(y)^{2})e^{h(y)^{2}}dx\wedge dy.
\]
Inside its support, $d\phi=0$ precisely on the slice $y=0$ which
has zero Lebesgue measure.
\end{example}

\subsubsection{\label{subsec:HypoCase}The hypoelliptic case}

We now extend the previous analysis to the hypoelliptic case. We first
give the following key definition. Let $V_{1},\cdots,V_{d}$ be a
family of smooth vector fields on a differentiable manifold $M$.
\begin{defn}
\label{def:HorCond}We say that $V_{1},\cdots,V_{d}$ satisfy \textit{H\"ormander's condition}
if the following family of vector fields 
\[
V_{i},[V_{i},V_{j}],[V_{i},[V_{j},V_{k}]],[V_{i},[V_{j},[V_{k},V_{l}]]],\cdots\ \ \ (i,j,k,l\ \text{etc.}=1,\cdots,d)
\]
linearly span $T_{x}M$ at every $x\in M$.
\end{defn}

It is a well-known fact that under H\"ormander's condition, the solution
$X_{t}$ to the RDE (\ref{eq:MainRDE}) admits a smooth density function
with respect to the Lebesgue measure (cf. \cite{CF10,CHLT15}). In
the diffusion case, this result was first established in the seminal
work of fH\"ormander \cite{Hor67}.

We now consider a stochastic line integral $F=\int_{0}^{T}\phi(dX_{t})$,
where $X_{t}$ is the solution to the RDE (\ref{eq:MainRDE}) and
$V_{1},\cdots,V_{d}$ are $C_{b}^{\infty}$-vector fields on $M=\mathbb{R}^{n}$
that satisfy H\"ormander's condition. We shall obtain a quantitative
criterion for the non-degeneracy of $F$ and derive an explicit method
of constructing one-forms that satisfy such criterion.

\subsubsection*{A general criterion}

In order to derive a non-degeneracy criterion for $F$, as in the
elliptic case we start by assuming that $DF(w)=0$ at a given fBM
path $w$. We aim at obtaining a geometric constraint on $\phi$ which
holds at paths $w$ satisfying $DF(w)=0$ (in the elliptic case, the
constraint is closedness: $d\phi=0$). Our non-degeneracy criterion
will simply be that ``$\phi$ does not satisfy such a geometric constraint''
(in the elliptic case, $d\phi\neq0$ a.e. inside ${\rm supp}\phi$).

We begin by fixing the following notation. Given a word $I=(i_{1},\cdots,i_{k})$
over the letters $\{1,\cdots,d\}$, we set 
\[
V_{I}\triangleq[V_{i_{1}},[V_{i_{2}},\cdots,[V_{i_{k-1}},V_{i_{k}}]]].
\]
The set of finite words (respectively, of length $k$) is denoted
as ${\cal W}$ (respectively, ${\cal W}_{k}$). The following lemma
is crucial for us.
\begin{lem}
\label{lem:ConsAllDeg}We define $\{\psi_{I}:I\in{\cal W}\}$ inductively
in the following way: 
\[
\psi_{I}\triangleq0\ \ \ \text{for }I\in{\cal W}_{1},
\]
and 
\begin{equation}
\psi_{I}\triangleq d\phi(V_{i},V_{J})+V_{i}\psi_{J}\ \ \ \text{for }I=(i,J).\label{eq:PsiI}
\end{equation}
Suppose that $DF(w)=0$. Then at the path $w$, we have
\begin{equation}
(\eta\cdot\Phi^{-1}+\phi)\cdot V_{I}+\psi_{I}=0\ \ \ \forall I\in{\cal W},\label{eq:ConsAllDeg}
\end{equation}
where
\[
\eta_{t}(w)\triangleq\int_{t}^{T}d(\phi\cdot V_{\alpha})(X_{s}(w))\cdot\Phi_{s}(w)dw_{s}^{\alpha}.
\]
\end{lem}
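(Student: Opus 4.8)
The plan is to prove \eqref{eq:ConsAllDeg} by induction on the length $k$ of the word $I$, using the pathwise identity that follows from $DF(w)=0$ together with Lemma \ref{lem:Pullback} and the truly-rough property of fBM (Lemma \ref{lem:fBMProp} (ii)). First I would establish the base case $k=1$. By Lemma \ref{lem:HDerSLI} and Lemma \ref{lem:fBMProp} (i), the hypothesis $DF(w)=0$ forces the integrand in \eqref{eq:HDerSLI} to vanish identically, i.e. $\big((\zeta_T-\zeta_t)\cdot\Phi_t^{-1}+\phi(X_t)\big)\cdot V_\alpha(X_t)=0$ for all $t$ and all $\alpha$. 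I would note that $\eta_t = \zeta_T-\zeta_t$ (both equal $\int_t^T d(\phi\cdot V_\beta)(X_s)\Phi_s\,dw_s^\beta$), so this is exactly $(\eta\cdot\Phi^{-1}+\phi)\cdot V_i + \psi_i = 0$ with $\psi_i=0$, which is the claim for $I\in\mathcal W_1$.

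For the inductive step, suppose \eqref{eq:ConsAllDeg} holds for all words of length $k$, and let $I=(i,J)$ with $|J|=k$. The key idea is to differentiate the relation for $J$ along the driving path. Write the function of $t$ appearing in the degree-$J$ relation as $g^J_t \triangleq (\eta_t\cdot\Phi_t^{-1}+\phi(X_t))\cdot V_J(X_t) + \psi_J(X_t)$, which by the inductive hypothesis is identically zero at the path $w$. I would compute $dg^J_t$ using the three RDEs: $d\eta_t = -d(\phi\cdot V_\alpha)(X_t)\cdot\Phi_t\,dw_t^\alpha$, the equation $d\Phi_t^{-1} = -\Phi_t^{-1}DV_\alpha(X_t)\,dw_t^\alpha$, and Lemma \ref{lem:Pullback} applied to $W=V_J$, which gives $d\big(\Phi_t^{-1}V_J(X_t)\big) = \Phi_t^{-1}[V_\alpha,V_J](X_t)\,dw_t^\alpha$. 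The two occurrences of $d(\phi\cdot V_\alpha)(X_t)\cdot\Phi_t$ — one from $d\eta_t$ paired against $\Phi_t^{-1}V_J(X_t)$, one from differentiating $\phi(X_t)\cdot\Phi_t$ against $\Phi_t^{-1}V_J$ — should combine so that, after a short computation, the coefficient of $dw_t^\alpha$ in $dg^J_t$ becomes $\big(\eta_t\cdot\Phi_t^{-1}+\phi(X_t)\big)\cdot[V_\alpha,V_J](X_t) + d\phi(V_\alpha,V_J)(X_t) + (V_\alpha\psi_J)(X_t)$; recognising the last two terms as $\psi_{(\alpha,J)}(X_t)$ by the defining recursion \eqref{eq:PsiI}. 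Since $g^J\equiv 0$ along $w$, its rough differential is zero, and $w$ being truly rough (Lemma \ref{lem:fBMProp} (ii)) forces each coefficient to vanish: $(\eta\cdot\Phi^{-1}+\phi)\cdot V_{(\alpha,J)} + \psi_{(\alpha,J)} = 0$ for all $\alpha$, which is precisely \eqref{eq:ConsAllDeg} for $I=(\alpha,J)$, completing the induction.

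The main obstacle I anticipate is the bookkeeping in the differentiation step: one must carefully track how $d\phi$ arises from $\frac{\partial\phi}{\partial x^i}V_\alpha^i - \frac{\partial(\phi\cdot V_\alpha)}{\partial x^i}$ combined with the $\phi\cdot DV_\alpha$ and $[V_\alpha,V_J]$ contributions, checking that everything reorganises into the manifestly intrinsic expression $d\phi(V_\alpha,V_J)+V_\alpha\psi_J$ via the formula \eqref{eq:2Form} for the exterior derivative of a one-form. This is the same type of cancellation that underlies the elliptic computation \eqref{eq:EllipPf2}–\eqref{eq:EllipPf3}, and the cleanest route is to phrase the whole calculation intrinsically from the start using \eqref{eq:IntrinsicHDerSLI} and Lemma \ref{lem:Pullback} rather than in coordinates. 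A minor point to verify is that each $g^J_t$ is a legitimate rough path controlled by $B$ so that Lemma \ref{lem:fBMProp} (ii) applies — this follows from the $C_p^\infty$ assumptions exactly as in the technical remark after Lemma \ref{lem:HDerSLI}.
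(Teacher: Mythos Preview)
Your proposal is correct and follows essentially the same route as the paper: induction on $|I|$, the base case from Lemma~\ref{lem:HDerSLI} and Lemma~\ref{lem:fBMProp}(i), and the inductive step by differentiating the degree-$J$ relation, invoking Lemma~\ref{lem:Pullback} for $d(\Phi^{-1}V_J)$, and then Lemma~\ref{lem:fBMProp}(ii) to kill each $dw^{\alpha}$-coefficient. The paper organises the key algebraic cancellation via the identity $V_i(\phi\cdot V_J)-V_J(\phi\cdot V_i)=d\phi(V_i,V_J)+\phi\cdot[V_i,V_J]$ (which is exactly your anticipated use of \eqref{eq:2Form}); your description of ``two occurrences of $d(\phi\cdot V_\alpha)\Phi$'' is slightly loose, but once you write it out it collapses to the same computation.
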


\begin{proof}
We prove the claim by induction on the length of $I$. When $I\in{\cal W}_{1}$,
this is a restatement of (\ref{eq:EllipPf1}), whose proof clearly
does not rely on ellipticity. Suppose that (\ref{eq:ConsAllDeg})
is true for all words of length $\leqslant k$. By taking differential
with $I\in{\cal W}_{k}$, we find that
\[
d\eta\cdot(\Phi^{-1}V_{I})+\eta\cdot d(\Phi^{-1}\cdot V_{I})+V_{i}(\phi\cdot V_{I}+\psi_{I})dw_{t}^{i}=0.
\]
By the definition of $\eta$ and Lemma \ref{lem:Pullback}, we have
\[
\big(\eta\cdot\Phi^{-1}[V_{i},V_{I}]-V_{I}(\phi\cdot V_{i})+V_{i}(\phi\cdot V_{I})+V_{i}\psi_{I}\big)dw_{t}^{i}=0.
\]
It follows from Lemma \ref{lem:fBMProp} (ii) that
\[
\eta\cdot\Phi^{-1}[V_{i},V_{I}]-V_{I}(\phi\cdot V_{i})+V_{i}(\phi\cdot V_{I})+V_{i}\psi_{I}=0\ \ \ \forall i=1,\cdots,d.
\]
In addition, note that 
\begin{align*}
V_{i}(\phi\cdot V_{I})-V_{I}(\phi\cdot V_{i}) & =(V_{i}\phi)V_{I}-(V_{I}\phi)V_{i}+\phi\cdot(DV_{I}\cdot V_{i}-DV_{i}\cdot V_{I})\\
 & =d\phi(V_{i},V_{I})+\phi\cdot[V_{i},V_{I}].
\end{align*}
Therefore, we obtain that 
\[
(\eta\cdot\Phi^{-1}+\phi)[V_{i},V_{I}]+d\phi(V_{i},V_{I})+V_{i}\psi_{I}=0\ \ \ \forall i.
\]
According to the definition of $\psi_{I'}$, the above relation is
precisely the desired property for the word $I'=(i,I)$ ($i=1,\cdots,d$).
This completes the induction step.
\end{proof}
We shall make use of properly chosen local frame fields (i.e. family
of vector fields that form a basis of $T_{x}\mathbb{R}^{n}$ at every
point $x$) associated with H\"ormander's condition. As a result,
our condition on $\phi$ will be expressed locally in terms of these
frame fields. For each $x\in\mathbb{R}^{n},$ according to H\"ormander's condition
and continuity, there exists a neighbourhood $U$ of $x$ together
with subsets ${\cal I}_{1},\cdots,{\cal I}_{r}$ of words (${\cal I}_{k}\subseteq{\cal W}_{k}$),
such that 
\[
\{V_{I}:I\in{\cal I}_{1}\cup\cdots\cup{\cal I}_{r}\}
\]
form a local frame field of $\mathbb{R}^{n}$ on $U$. We may assume
that ${\rm supp}\phi$ is covered by such local ``charts''.

Now suppose that $X(w)$ passes through a local chart $U$ of ${\rm supp}\phi$
on which a local frame field 
\[
{\cal V}=\{V_{I}:I\in{\cal I}_{1}\cup\cdots\cup{\cal I}_{r}\}
\]
is chosen and fixed. Note that 
\[
|{\cal I}_{1}|+\cdots+|{\cal I}_{r}|=n.
\]
Let $W$ be the ${\rm Mat}(n,n)$-valued function on $U$ defined
by 
\[
W\triangleq(V_{I})_{I\in{\cal I}_{k},1\leqslant k\leqslant r}
\]
and set 
\[
\Theta\triangleq(\psi_{I})_{I\in{\cal I}_{k},1\leqslant k\leqslant r},
\]
where $\psi_{I}$ is defined by (\ref{eq:PsiI}). Under the assumption
$DF(w)=0,$ the relation (\ref{eq:ConsAllDeg}) can be written in
matrix form as
\[
(\eta\cdot\Phi^{-1}+\phi)\cdot W+\Theta=0
\]
Since $W$ is invertible, we have 
\begin{equation}
\eta+\phi\cdot\Phi=\Xi\cdot\Phi\label{eq:InvertedCons}
\end{equation}
where $\Xi\triangleq-\Theta\cdot W^{-1}.$ Note that (\ref{eq:InvertedCons})
holds at $w$ for all times in 
\[
L_{w}\triangleq\{t\in[0,1]:X_{t}(w)\in U\}.
\]
Let $\{\omega^{I}\}$ be the coframe field dual to ${\cal V}$. As
a one-form on $U,$ we have
\begin{equation}
\Xi=-\sum_{k=2}^{r}\sum_{I\in{\cal I}_{k}}\psi_{I}\omega^{I}\ \ \ \text{on \ensuremath{U},}\label{eq:XiCoF}
\end{equation}

\begin{lem}
\label{lem:GeoIden}Let $\omega=\omega_{i}dx^{i}$ be a one-form and
$V=V^{i}\partial_{i}$ be a vector field. Then the following two identities
hold true:

\vspace{2mm}\noindent (i) $-d(\omega\cdot V)+V\omega+\omega\cdot DV=i(V)d\omega$;\\
(ii) $V\omega+\omega\cdot DV=L_{V}\omega.$
\end{lem}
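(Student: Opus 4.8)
The plan is to verify both identities by a direct computation in the coordinate notation fixed in the Notation paragraph, using the characterisation (\ref{eq:2Form}) of $d\omega$ and Cartan's identity (\ref{eq:Cartan}). Throughout, recall that $\omega=\omega_i\,dx^i$ is a row vector, $V=V^i\partial_i$ a column vector, $DV$ has $(i,j)$-entry $\partial_j V^i$, and $(V\omega)_k=V^j\partial_j\omega_k$.

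For part (i), I would evaluate both sides against the coordinate vector field $\partial_k$ and compare $k$-th components. Using (\ref{eq:2Form}) with $X=V$, $Y=\partial_k$, together with the elementary bracket identity $[V,\partial_k]=-(\partial_k V^i)\partial_i$ (the second-order terms cancelling), one computes
\[
\big(i(V)d\omega\big)_k=d\omega(V,\partial_k)=V^j\partial_j\omega_k-(\partial_k\omega_i)V^i .
\]
On the other hand $\big(d(\omega\cdot V)\big)_k=\partial_k(\omega_iV^i)=(\partial_k\omega_i)V^i+\omega_i\partial_k V^i$, while $(V\omega)_k=V^j\partial_j\omega_k$ and $(\omega\cdot DV)_k=\omega_i\partial_k V^i$. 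Assembling these with the signs in (i), the two occurrences of $\omega_i\partial_k V^i$ cancel and the surviving terms are exactly $\big(i(V)d\omega\big)_k$. This establishes (i).

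For part (ii), I would apply Cartan's identity (\ref{eq:Cartan}) to the one-form $\omega$: since $i(V)\omega=\omega\cdot V$ is a function, $d\big(i(V)\omega\big)=d(\omega\cdot V)$, so $L_V\omega=d(\omega\cdot V)+i(V)d\omega$. Substituting the expression for $i(V)d\omega$ obtained in (i) and cancelling $d(\omega\cdot V)$ gives $L_V\omega=V\omega+\omega\cdot DV$. Alternatively, (ii) can be read off directly from the definition (\ref{eq:LieDer}): $(L_V\omega)(\partial_k)=V(\omega_k)-\omega([V,\partial_k])=V^j\partial_j\omega_k+\omega_i\partial_k V^i$, which is precisely $(V\omega+\omega\cdot DV)_k$.

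There is no real obstacle here; the only thing demanding care is the bookkeeping of the matrix conventions — in particular that $\omega$ contracts $DV$ on the first index so that $(\omega\cdot DV)_k=\omega_i\partial_k V^i$ — and the sign in $[V,\partial_k]=-(\partial_k V^i)\partial_i$, which is where the asymmetry between $V\omega$ and the ``missing'' derivative of $V$ enters.
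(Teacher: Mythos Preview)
Your proof is correct and essentially the same as the paper's, just with the order reversed: the paper verifies (ii) directly from the definition (\ref{eq:LieDer}) via $(L_V\omega)(\partial_i)=V\omega_i-\omega([V,\partial_i])$ and then deduces (i) from (ii) and Cartan's identity, whereas you verify (i) directly from (\ref{eq:2Form}) and then deduce (ii) from (i) and Cartan. Your ``alternatively'' proof of (ii) is in fact exactly the paper's computation, so there is no substantive difference in content.
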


\begin{proof}
By the definition (\ref{eq:LieDer}) of the Lie derivative, we have
\begin{align*}
(L_{V}\omega)(\partial_{i}) & =V\omega_{i}-\omega\cdot[V,\partial_{i}]=V^{j}\frac{\partial\omega_{i}}{\partial x^{j}}+\omega_{j}\frac{\partial V^{j}}{\partial x^{i}}\\
 & =(V\omega+\omega\cdot DV)\cdot\partial_{i}.
\end{align*}
This justifies the relation in (ii). The relation of (i) is a simple
consequence of (ii) and Cartan's identity (\ref{eq:Cartan}).
\end{proof}
\begin{lem}
\label{lem:HypoCrit}Suppose that $DF(w)=0$ and $X(w)$ passes through
the local chart $U$. Then at the path $w$, we have 
\begin{equation}
i(V_{\alpha})d\phi=L_{V_{\alpha}}\Xi\label{eq:HypoConsGen}
\end{equation}
for all $\alpha=1,\cdots,d$ and $t\in L_{w}$.
\end{lem}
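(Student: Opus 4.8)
The plan is to combine the invariance relation (\ref{eq:InvertedCons}), namely $\eta_t + \phi(X_t)\cdot\Phi_t = \Xi(X_t)\cdot\Phi_t$ valid for all $t\in L_w$, with the defining equation of $\eta_t$ and the pathwise calculus of Lemma \ref{lem:HDerSLI}. First I would differentiate both sides of (\ref{eq:InvertedCons}) along the driving path $w$. On the left-hand side, $d\eta_t = -d(\phi\cdot V_\alpha)(X_t)\cdot\Phi_t\,dw_t^\alpha$ by the definition of $\eta$, while $d(\phi(X_t)\cdot\Phi_t)$ is computed exactly as in (\ref{eq:EllipPf2}): using $dX_t = V_\alpha(X_t)dw_t^\alpha$ and $d\Phi_t = DV_\alpha(X_t)\Phi_t\,dw_t^\alpha$ one gets $\big((V_\alpha\phi)(X_t) + \phi(X_t)\cdot DV_\alpha(X_t)\big)\Phi_t\,dw_t^\alpha$. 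On the right-hand side, the same product rule applied to $\Xi(X_t)\cdot\Phi_t$ yields $\big((V_\alpha\Xi)(X_t) + \Xi(X_t)\cdot DV_\alpha(X_t)\big)\Phi_t\,dw_t^\alpha$.

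Next I would equate the integrands. Since $w$ is truly rough (Lemma \ref{lem:fBMProp}(ii)) and $\Phi_t$ is invertible, the $\text{Mat}(1,n)$-valued coefficients of each $dw_t^\alpha$ must agree for all $t\in L_w$ and all $\alpha$. Cancelling the common factor $\Phi_t$ this reads
\[
-d(\phi\cdot V_\alpha)(X_t) + (V_\alpha\phi)(X_t) + \phi(X_t)\cdot DV_\alpha(X_t) = (V_\alpha\Xi)(X_t) + \Xi(X_t)\cdot DV_\alpha(X_t).
\]
Here one must be slightly careful: the subtlety is that on the left we should really track the term coming from $\Xi$ differentiated as well, but since $\Xi$ is itself a one-form the $d(\Xi\cdot V_\alpha)$ contribution does not appear — only $V_\alpha\Xi + \Xi\cdot DV_\alpha$ does, because $\Xi(X_t)$ was not integrated against $d(\Xi\cdot V_\alpha)$ in the first place. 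This is the one place where I expect to have to be careful about bookkeeping.

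Finally, I would invoke Lemma \ref{lem:GeoIden}: part (i) identifies the left-hand side as $i(V_\alpha)d\phi$ evaluated at $X_t$, and part (ii) identifies the right-hand side as $(L_{V_\alpha}\Xi)(X_t)$. This gives $i(V_\alpha)d\phi = L_{V_\alpha}\Xi$ at $X_t(w)$ for every $\alpha = 1,\dots,d$ and every $t\in L_w$, which is exactly (\ref{eq:HypoConsGen}). The main obstacle is really just the first step — correctly and rigorously differentiating the identity (\ref{eq:InvertedCons}) as a rough/Young integral relation and justifying that the integrand of a null rough integral against a truly rough path must vanish — but this is precisely the type of pathwise manipulation already carried out in the proof of Lemma \ref{lem:HDerSLI} and in \cite{CHLT15}, so it can be cited rather than redone.
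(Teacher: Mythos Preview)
Your proposal is correct and follows essentially the same route as the paper's proof: differentiate the relation (\ref{eq:InvertedCons}), use true roughness (Lemma \ref{lem:fBMProp}(ii)) together with invertibility of $\Phi_t$ to identify the $dw^{\alpha}$-coefficients, and then apply Lemma \ref{lem:GeoIden} to recognise the two sides as $i(V_{\alpha})d\phi$ and $L_{V_{\alpha}}\Xi$. Your cautionary remark about the absence of a $d(\Xi\cdot V_{\alpha})$ term is well placed but ultimately unnecessary: the asymmetry arises simply because the $\eta$-term on the left contributes $-d(\phi\cdot V_{\alpha})\cdot\Phi\,dw^{\alpha}$ by its very definition, whereas nothing analogous appears on the right.
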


\begin{proof}
By taking differential of the relation (\ref{eq:InvertedCons}), we
obtain that
\[
\big(-d(\phi\cdot V_{\alpha})\cdot\Phi+(V_{\alpha}\cdot\phi)\cdot\Phi+\phi\cdot DV_{\alpha}\cdot\Phi\big)dw^{\alpha}=\big((V_{\alpha}\Xi)\cdot\Phi+\Xi\cdot DV_{\alpha}\cdot\Phi\big)dw^{\alpha}.
\]
After cancelling $\Phi$ on both sides, Lemma \ref{lem:fBMProp} (ii)
implies that 
\[
-d(\phi\cdot V_{\alpha})+V_{\alpha}\phi+\phi\cdot DV_{\alpha}=V_{\alpha}\Xi+\Xi\cdot DV_{\alpha}\ \ \ \forall\alpha=1,\cdots,d\ \text{and }t\in L_{w}.
\]
The result follows immediately from Lemma \ref{lem:GeoIden}.
\end{proof}
Equivalently, Lemma \ref{lem:HypoCrit} suggests that if the relation
(\ref{eq:HypoConsGen}) does not hold on $U$ and if $X(w)$ passes
through $U$, then $DF(w)\neq0$. As a consequence, along the same
lines of argument as in the elliptic case, we have proved the following
result, which is the main theorem in this section giving a quantitative
non-degeneracy criterion for the line integral $F$.
\begin{thm}
\label{thm:GenMain}Let $\phi$ be a $C_{p}^{\infty}$ one-form on
$\mathbb{R}^{n}$. Suppose that the support of $\phi$ is covered
by local charts $U$ on which suitable local frame fields ${\cal V}$
are chosen and fixed. For each $U$, define the local one-form $\Xi_{U}$
on $U$ by (\ref{eq:XiCoF}) with respect to the coframe dual to ${\cal V}.$
Suppose that on each chart $U,$ we have
\begin{equation}
i(V_{\alpha})d\phi-L_{V_{\alpha}}\Xi_{U}\neq0\ \ \ \text{a.e. on}\ U\cap{\rm supp\phi}\label{eq:GenMain}
\end{equation}
for some $\alpha=1,\cdots,d.$ Then conditional on the event that
``$X$ enters the support of $\phi$'', the stochastic line integral
$\int_{0}^{T}\phi(dX_{t})$ has a density with respect to the Lebesgue
measure.
\end{thm}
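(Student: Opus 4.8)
The plan is to follow the blueprint already established in the proof of Theorem \ref{thm:Ellip}, replacing the elliptic cancellation step by the local-frame argument developed in Lemmas \ref{lem:ConsAllDeg}--\ref{lem:HypoCrit}. By Theorem \ref{thm:Malliavin1D}, it suffices to show that, modulo a $\mathbb{P}$-null set, the event $E=\{X\text{ enters }({\rm supp}\phi)^{\circ}\}$ is contained in $\{DF\neq0\}$ (recall $F,DF,D^{2}F\in L^{p}$ for all $p>1$ by the technical remark, since $\phi\in C_{p}^{\infty}$). So first I would fix a null set $N_{1}$ outside which $w$ admits a canonical rough-path lift and is truly rough, so that Lemma \ref{lem:fBMProp} applies. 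I would then argue by contradiction: suppose $w\in E\cap N_{1}^{c}$ with $DF(w)=0$.

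The second step is the localisation. Cover ${\rm supp}\phi$ by countably many charts $U$ on which local frame fields ${\cal V}$ are chosen; by continuity, if $X(w)$ enters $({\rm supp}\phi)^{\circ}$ then it enters some chart $U$ and $L_{w}=\{t:X_{t}(w)\in U\}$ has nonempty interior, hence contains a rational time $r$ with $X_{r}(w)\in U\cap({\rm supp}\phi)^{\circ}$. On that chart, Lemma \ref{lem:HypoCrit} (whose hypotheses $DF(w)=0$ and ``$X(w)$ passes through $U$'' are exactly what we have) yields
\[
i(V_{\alpha})d\phi(X_{t}(w))=L_{V_{\alpha}}\Xi_{U}(X_{t}(w))\ \ \ \forall t\in L_{w},\ \alpha=1,\dots,d.
\]
By continuity this holds in particular at every rational $t\in L_{w}$, so at $t=r$ we get $X_{r}(w)\in\Lambda_{U}$, where $\Lambda_{U}\triangleq\{x\in U\cap{\rm supp}\phi:i(V_{\alpha})d\phi(x)=L_{V_{\alpha}}\Xi_{U}(x)\ \forall\alpha\}$. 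By the hypothesis \eqref{eq:GenMain}, for each chart $U$ the set $\Lambda_{U}$ is Lebesgue-null. Since the law of $X_{r}$ is absolutely continuous with respect to Lebesgue measure (H\"ormander's condition, cf. \cite{CF10,CHLT15}), $\mathbb{P}(X_{r}\in\Lambda_{U})=0$ for every rational $r$ and every chart $U$ in the (countable) cover. Excluding the null set $N_{2}\triangleq\bigcup_{U}\bigcup_{r\in\mathbb{Q}\cap[0,T]}\{X_{r}\in\Lambda_{U}\}$ then forces a contradiction, so $E\backslash(N_{1}\cup N_{2})\subseteq\{DF\neq0\}$, and Theorem \ref{thm:Malliavin1D} finishes the proof.

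The routine bookkeeping is the same as in Theorem \ref{thm:Ellip}; the only genuinely delicate point — and the one I would state carefully — is the passage ``$X(w)$ enters $U$ $\Rightarrow$ $X(w)$ enters $U\cap({\rm supp}\phi)^{\circ}$ at a rational time.'' This needs the open chart $U$ to be chosen so that entry into $({\rm supp}\phi)^{\circ}$ means entry into $U\cap({\rm supp}\phi)^{\circ}$; since the cover is of ${\rm supp}\phi$, any point of $({\rm supp}\phi)^{\circ}$ hit by $X(w)$ lies in some $U$, and continuity upgrades a single visit to a visit on a time interval of positive length, hence at a rational time. The one subtlety worth flagging is that $\Xi_{U}$, being built from the $\psi_{I}$ and the inverse frame matrix $W^{-1}$, depends only on the vector fields and $\phi$ on $U$ (not on $w$), so \eqref{eq:GenMain} is a genuine deterministic condition and $\Lambda_{U}$ is a fixed Lebesgue-measurable set; this is what makes the absolute-continuity argument go through verbatim.
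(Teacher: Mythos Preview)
Your proposal is correct and follows essentially the same approach as the paper: the paper's proof of Theorem \ref{thm:GenMain} is just the observation that Lemma \ref{lem:HypoCrit} forces the constraint $i(V_{\alpha})d\phi=L_{V_{\alpha}}\Xi_{U}$ along $X(w)$ whenever $DF(w)=0$, followed by the phrase ``along the same lines of argument as in the elliptic case,'' which is exactly the null-set and density argument you have spelled out. Your careful bookkeeping (countable cover, rational times, the fact that $\Xi_{U}$ is deterministic) simply makes explicit what the paper leaves implicit.
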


\begin{rem}
The condition (\ref{eq:GenMain}) is stronger than the non-closedness
condition $d\phi\neq0$ a.e. obtained in the elliptic case. Indeed,
it is obvious that 
\[
d\phi=0\implies\Xi_{U}=0\implies i(V_{\alpha})d\phi-L_{V_{\alpha}}\Xi_{U}=0.
\]
\end{rem}

\subsubsection*{An explicit method of construction}

The next basic question is whether there are rich examples of one-forms
that satisfy the non-degeneracy criteria derived in the previous sections.
In the elliptic case, the non-closedness condition is fairly easy
to achieve. In the hypoelliptic case, there is also a rich class of
one-forms (at least as generic as pairs of smooth functions) that
satisfy the condition (\ref{eq:GenMain}). In what follows, we discuss
a general and explicit method of constructing them.

We first recall some basic notation from sub-Riemannian geometry.
Suppose that $\{V_{1},\cdots,V_{d}\}$ are given smooth vector fields
on a differentiable manifold $M$ which satisfy H\"ormander's condition.
Define ${\cal D}_{1}$ to be the $C^{\infty}(M)$-module generated
by $\{V_{1},\cdots,V_{d}\}$. Equivalently, for each $x\in M$, ${\cal D}_{1}(x)$
is the subspace of $T_{x}(M)$ defined by 
\[
{\cal D}_{1}(x)={\rm Span}\{V_{1}(x),\cdots,V_{d}(x)\},\ \ \ x\in M.
\]
Inductively, define 
\[
{\cal D}_{k}\triangleq{\cal D}_{k-1}+[{\cal D}_{1},{\cal D}_{k-1}],\ \ \ k\geqslant2,
\]
where $[{\cal D}_{1},{\cal D}_{k-1}]$ denote the $C^{\infty}(M)$-module
generated by $\{[X,Y]:X\in{\cal D}_{1},Y\in{\cal D}_{k-1}\}$. Elements
in ${\cal D}_{k}$ are linear combinations of $\{V_{I}:|I|\leqslant k\}$
with smooth coefficients. According to H\"ormander's condition, at
every $x\in M$ there is a smallest integer $r(x)$ such that ${\cal D}_{r(x)}(x)=T_{x}M$.
Observe that 
\[
\{0\}=:{\cal D}_{0}(x)\subseteq{\cal D}_{1}(x)\subseteq{\cal D}_{2}(x)\subseteq\cdots\subseteq{\cal D}_{r(x)}(x).
\]
The list of integers 
\[
\dim{\cal D}_{1}(x)<\dim{\cal D}_{2}(x)<\cdots<\dim{\cal D}_{r(x)}(x)
\]
is known as the \textit{growth vector} of $\{V_{1},\cdots,V_{d}\}$
at $x$. A point $x$ is a \textit{regular point }if the growth vector
is constant near $x$. The set of regular points is open and dense
in $M$.

The following simple algebraic lemma allows us to choose preferable
local frame fields to work with.
\begin{lem}
\label{lem:GoodChart}Let $x_{0}\in M$ be a regular point. There
exists a neighbourhood $U$ of $x_{0}$ and a collection of words
$({\cal I}_{1},\cdots,{\cal I}_{r})$ ($r\triangleq r(x_{0})$, ${\cal I}_{k}\subseteq{\cal W}_{k}$),
such that the following two properties hold true for each $k=1,\cdots,r$:

\vspace{2mm}\noindent (i) ${\cal I}_{k}\subseteq{\cal I}_{1}\times{\cal I}_{k-1}$;\\
(ii) $\{V_{I}:I\in{\cal I}_{1}\cup\cdots\cup{\cal I}_{k}\}$ is a
local frame field of ${\cal D}_{k}$ on $U$.
\end{lem}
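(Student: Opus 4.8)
The plan is to build the index sets $({\cal I}_1,\dots,{\cal I}_r)$ one layer at a time by a greedy selection, exploiting the fact that at a regular point the dimensions $\dim{\cal D}_k(x)$ are locally constant and the brackets $V_I$ with $|I|\le k$ generate ${\cal D}_k$ as a $C^\infty$-module. First I would set $d_k\triangleq\dim{\cal D}_k(x_0)$ for $k=0,\dots,r$, so that $d_0=0<d_1<\cdots<d_r=n$. Since $x_0$ is regular, there is a neighbourhood on which each ${\cal D}_k$ is a subbundle of rank $d_k$. The construction proceeds inductively: for $k=1$ take ${\cal I}_1\subseteq{\cal W}_1=\{1,\dots,d\}$ to be a subset of size $d_1$ such that $\{V_i:i\in{\cal I}_1\}$ are linearly independent at $x_0$; by continuity they stay a frame for ${\cal D}_1$ on a (possibly smaller) neighbourhood, which establishes property (ii) for $k=1$, and property (i) is vacuous there.

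For the inductive step, suppose ${\cal I}_1,\dots,{\cal I}_{k-1}$ have been chosen on a neighbourhood $U_{k-1}$ so that $\{V_I:I\in{\cal I}_1\cup\cdots\cup{\cal I}_{k-1}\}$ frames ${\cal D}_{k-1}$. By definition ${\cal D}_k={\cal D}_{k-1}+[{\cal D}_1,{\cal D}_{k-1}]$, and since ${\cal D}_{k-1}$ is spanned as a module by the already-chosen frame while ${\cal D}_1$ is spanned by $\{V_i:i\in{\cal I}_1\}$, the module ${\cal D}_k$ is generated by the old frame together with the brackets $\{[V_i,V_J]:i\in{\cal I}_1,\ J\in{\cal I}_1\cup\cdots\cup{\cal I}_{k-1}\}$; note $[V_i,V_J]=V_{(i,J)}$ when $J$ has length $k-1$, and for shorter $J$ the bracket already lies in ${\cal D}_{k-1}$. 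Hence, modulo ${\cal D}_{k-1}(x_0)$, the vectors $\{V_{(i,J)}(x_0):i\in{\cal I}_1,\ J\in{\cal I}_{k-1}\}$ span the quotient ${\cal D}_k(x_0)/{\cal D}_{k-1}(x_0)$, which has dimension $d_k-d_{k-1}$. I would then pick ${\cal I}_k\subseteq{\cal I}_1\times{\cal I}_{k-1}$ to be a subset of exactly $d_k-d_{k-1}$ words $(i,J)$ whose images form a basis of that quotient; adjoining these to the frame for ${\cal D}_{k-1}$ gives $d_k$ vectors that are linearly independent at $x_0$ and span ${\cal D}_k(x_0)$. Property (i) holds by construction, and by continuity (plus constancy of $\dim{\cal D}_k$ near $x_0$, which is where regularity is used) there is a neighbourhood $U_k\subseteq U_{k-1}$ on which these vectors remain a local frame for ${\cal D}_k$. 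Taking $U\triangleq U_r$ and intersecting the finitely many neighbourhoods yields the claim.

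The only genuinely delicate point is the passage from "spanning at $x_0$" to "spanning on a neighbourhood": linear independence of smooth vector fields at a point is an open condition and propagates automatically, but one must also know that the span does not jump up in dimension nearby, which is exactly the hypothesis that $x_0$ is regular (the growth vector, hence every $\dim{\cal D}_k$, is locally constant). I expect this to be the main obstacle only in the sense of bookkeeping — there is nothing deep here, but care is needed to ensure that at each stage the chosen $V_I$ genuinely lie in the submodule generated by brackets of length $\le k$ with the prescribed nested index structure, so that property (i) is preserved through all $r$ steps. Everything else is elementary linear algebra together with the openness of the non-vanishing of a determinant.
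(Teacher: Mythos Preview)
Your proposal is correct and follows essentially the same inductive argument as the paper: choose ${\cal I}_1$ as a basis of ${\cal D}_1(x_0)$, then at each step use the Leibniz rule to see that ${\cal D}_k(x_0)$ is spanned modulo ${\cal D}_{k-1}(x_0)$ by the brackets $V_{(i,J)}(x_0)$ with $i\in{\cal I}_1$, $J\in{\cal I}_{k-1}$, pick a completing subset ${\cal I}_k\subseteq{\cal I}_1\times{\cal I}_{k-1}$, and invoke regularity plus continuity to propagate to a neighbourhood. The paper writes out the Leibniz expansion of $[W,Z]$ explicitly where you phrase it via the quotient ${\cal D}_k(x_0)/{\cal D}_{k-1}(x_0)$, but the content is identical.
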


\begin{proof}
We construct ${\cal I}_{k}$ by induction. First of all, let $U_{0}$
be a neighbourhood of $x_{0}$ on which the growth vector is constant.
Choose ${\cal I}_{1}$ so that $\{V_{i}(x_{0}):i\in{\cal I}_{1}\}$
form a basis of ${\cal D}_{1}(x_{0}).$ By continuity, there exists
$U_{1}\subseteq U_{0}$ such that $\{V_{i}(x):i\in{\cal I}_{1}\}$
are linearly independent for each $x\in U_{1}.$ Since $\dim{\cal D}_{1}$
is constant on $U_{1}$, we see that $\{V_{i}:i\in{\cal I}_{1}\}$
is a local frame field of ${\cal D}_{1}$ on $U_{1}.$

Now suppose that a neighbourhood $U_{k}$ and ${\cal I}^{(k)}={\cal I}_{1}\cup\cdots\cup{\cal I}_{k}$
have been obtained to satisfy the required properties. We claim that
\begin{equation}
{\cal D}_{k+1}(x_{0})={\rm Span}\big\{ V_{I}(x_{0}),[V_{i},V_{J}](x_{0}):i\in{\cal I}_{1},I\in{\cal I}^{(k)},J\in{\cal I}_{k}\big\}.\label{eq:GoodNbh}
\end{equation}
Indeed, let $W\in{\cal D}_{1}(U_{k})$ and $Z\in{\cal D}_{k}(U_{k})$.
By the induction hypothesis, we can write 
\[
W=\sum_{i\in{\cal I}_{1}}f_{i}V_{i},\ Z=\sum_{J\in{\cal I}_{1}\cup\cdots\cup{\cal I}_{k}}g_{J}V_{J}
\]
where $f_{i},g_{I}\in C^{\infty}(U_{k})$. It follows that 
\[
[W,Z]=\sum_{i\in{\cal I}_{1}}\sum_{J\in{\cal I}_{1}\cup\cdots\cup{\cal I}_{k}}\big((f_{i}V_{i}g_{J})V_{J}-(g_{J}V_{J}f_{i})V_{i}+f_{i}g_{J}[V_{i},V_{J}]\big).
\]
For $J\in{\cal I}^{(k-1)},$ since $[V_{i},V_{J}]\in{\cal D}_{k}$
is a $C^{\infty}(M)$-linear combination of $V_{I}$ ($I\in{\cal I}^{(k)}$)
on $U_{k}$, the claim (\ref{eq:GoodNbh}) follows immediately. Note
that $\{V_{I}(x_{0})\}:I\in{\cal I}^{(k)}\}$ are already linearly
independent. As a result, we can choose a collection ${\cal I}_{k+1}$
of $(i,J)$ with $i\in{\cal I}_{1},J\in{\cal I}_{k}$ such that 
\[
\{V_{I}(x_{0}),[V_{i},V_{J}](x_{0}):I\in{\cal I}^{(k)},(i,J)\in{\cal I}_{k+1}\}
\]
form a basis of ${\cal D}_{k+1}(x_{0}).$ By continuity and the constant
dimensionality of ${\cal D}_{k+1}$ on $U_{k}$, we see that $\{V_{I}:I\in{\cal I}^{(k+1)}\}$
is a local frame field of ${\cal D}_{k+1}$ on some $U_{k+1}\subseteq U_{k}.$
From the construction, it is also clear that ${\cal I}_{k+1}\subseteq{\cal I}_{1}\times{\cal I}_{k}$.
\end{proof}
\begin{rem}
We know from Property (i) that ${\cal I}_{k}\neq\emptyset$ for all
$k$.
\end{rem}

We now derive a general method of constructing one-forms that satisfy
Theorem \ref{thm:GenMain}. Let $(U;{\cal V}=\{V_{I}:I\in{\cal I}_{1}\cup\cdots\cup{\cal I}_{r}\})$
be a chosen local frame field that satisfies the properties in Lemma
\ref{lem:GoodChart}. For $I=(i,J),$ we denote $\llangle d\phi,V_{I}\rrangle\triangleq d\phi(V_{i},V_{J}).$
From the definition of $\psi_{I}$ (cf. (\ref{eq:PsiI})) and Property
(i) of Lemma \ref{lem:GoodChart}, it is not hard to see that
\[
\llangle d\phi,V_{I}\rrangle=0\ \forall I\in{\cal I}_{2}\cup\cdots\cup{\cal I}_{r}\implies\Xi=0,
\]
where $\Xi$ is the one-form defined by (\ref{eq:XiCoF}) with respect
to the local frame field ${\cal V}$. In addition, since $\Xi\cdot V_{\alpha}=0$
for all $\alpha\in{\cal I}_{1}$, we have 
\begin{equation}
i(V_{\alpha})d\phi-L_{V_{\alpha}}\Xi=i(V_{\alpha})d(\phi-\Xi).\label{eq:IntLie}
\end{equation}
As a result, a sufficient condition for (\ref{eq:GenMain}) to hold
on $U$ is that:

\vspace{2mm}\noindent (A) $\llangle d\phi,V_{I}\rrangle=0$ for all
$I\in{\cal I}_{2}\cup\cdots\cup{\cal I}_{r};$\\
(B) $i(V_{\alpha})d\phi\neq0$ a.e. on $U$ for some $\alpha\in{\cal I}_{1}$.

\vspace{2mm} We shall reduce the above two conditions to a more explicit
set of relations in terms of coefficients. To this end, let $\{\omega^{I}:I\in{\cal I}^{(r)}\triangleq{\cal I}_{1}\cup\cdots\cup{\cal I}_{r}\}$
be the coframe dual to ${\cal V}$ and express $\phi$ on $U$ as
\[
\phi=\sum_{I\in{\cal I}^{(r)}}c_{I}\omega^{I},
\]
where $c_{I}\in C^{\infty}(U)$. Let us fix a total ordering $\prec$
on ${\cal I}^{(r)}$ such that $I\prec J$ if $|I|<|J|$. For $I,J,K\in{\cal I}^{(r)},$
we set
\begin{align*}
\Lambda_{JK}^{I} & \triangleq d\omega^{I}(V_{J},V_{K})=V_{J}(\omega^{I}(V_{K}))-V_{K}(\omega^{I}(V_{J}))-\omega^{I}([V_{J},V_{K}])=-\omega^{I}([V_{J},V_{K}]).
\end{align*}
It follows that 
\begin{align*}
d\phi & =\sum_{I}\big(dc_{I}\wedge\omega^{I}+\sum_{J\prec K}c_{I}\Lambda_{JK}^{I}d\omega^{J}\wedge d\omega^{K}\big)\\
 & =\sum_{I}\big(\sum_{J}V_{J}c_{I}\omega^{J}\wedge\omega^{I}-\sum_{J\prec K}c_{I}\omega^{I}([V_{J},V_{K}])d\omega^{J}\wedge d\omega^{K}\big)\\
 & =\sum_{I\prec J}(V_{I}c_{J}-V_{J}c_{I}-\sum_{K}c_{K}\omega^{K}([V_{I},V_{J}]))\omega^{I}\wedge\omega^{J}.
\end{align*}

Let $c_{i}$ ($i\in{\cal I}_{1}$) be an arbitrary family of smooth
functions on $U$. Given $I=(i,j)\in{\cal I}_{2},$ since $i,j\in{\cal I}_{1}$,
we have
\[
\llangle d\phi,V_{I}\rrangle=\pm(V_{i}c_{j}-V_{j}c_{i}-\sum_{K}c_{K}\omega^{K}(V_{I})))=\pm(V_{i}c_{j}-V_{j}c_{i}-c_{I}\big).
\]
As a result, by setting
\[
c_{I}\triangleq V_{i}c_{j}-V_{j}c_{i},\ \ \ I=(i,j)\in{\cal I}_{2},
\]
we conclude that $\llangle d\phi,V_{I}\rrangle=0$ for all $I\in{\cal I}_{2}$.
Inductively on $k$, for $I=(i,J)\in{\cal I}_{k}$ we set 
\[
c_{I}\triangleq V_{i}c_{J}-V_{J}c_{i}
\]
where $c_{J}$ has already been defined since $J\in{\cal I}_{k-1}.$
It then follows that 
\[
\llangle d\phi,V_{I}\rrangle=0\ \ \ \forall I\in{\cal I}_{2}\cup\cdots\cup{\cal I}_{r}
\]
on $U$. In particular, the aforementioned Condition (A) holds. For
Condition (B), note that 
\begin{equation}
i(V_{\alpha})d\phi=\sum_{J:J\neq\alpha}(V_{\alpha}c_{J}-V_{J}c_{\alpha}-\sum_{K}c_{K}\omega^{K}([V_{\alpha},V_{J}]))\omega^{J}\label{eq:Innerdphi}
\end{equation}
for each $\alpha\in{\cal I}_{1}$. As a result, Condition (B) boils
down to requiring that at least one of the $\omega^{J}$-coefficients
in (\ref{eq:Innerdphi}) is a.e. nonzero on $U$.

To summarise, we have obtained the following result which provides
an explicit method of constructing one-forms that satisfy the criterion
(\ref{eq:GenMain}).
\begin{thm}
\label{thm:NonDegHypo}Let $c_{i}\in C_{c}^{\infty}(U)$ ($i\in{\cal I}_{1}$)
be given and define $c_{I}$ ($I\in{\cal I}_{k}$) inductively by
\[
c_{I}\triangleq V_{i}c_{J}-V_{J}c_{i},\ \ \ I=(i,J)\in{\cal I}_{k}.
\]
Suppose that for some $\alpha\in{\cal I}_{1}$ and $J\in{\cal I}_{r}$,
we have
\begin{equation}
V_{\alpha}c_{J}-V_{J}c_{\alpha}-\sum_{K}c_{K}\omega^{K}([V_{\alpha},V_{J}])\neq0\ \ \ \text{a.e. on }U.\label{eq:ExpCond}
\end{equation}
Then conditional on the event that ``$X$ enters the support of $\phi$'',
the stochastic line integral $\int_{0}^{T}\phi(dX_{t})$ has a density
with respect to the Lebesgue measure.
\end{thm}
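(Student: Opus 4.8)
The plan is to verify that the one-form $\phi \triangleq \sum_{I \in {\cal I}^{(r)}} c_I \omega^I$ built from the prescribed coefficients satisfies Conditions (A) and (B) that were shown (in the discussion preceding the statement) to imply the hypothesis (\ref{eq:GenMain}) of Theorem \ref{thm:GenMain}. Once (\ref{eq:GenMain}) holds, Theorem \ref{thm:GenMain} gives the conditional density directly, so the entire content of the proof is the bookkeeping that reduces (\ref{eq:ExpCond}) to (A)+(B). I would carry this out in three steps.

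\textbf{Step 1: Condition (A) holds by the inductive construction.} Using the coefficient expansion
\[
d\phi = \sum_{I \prec J}\Big(V_I c_J - V_J c_I - \sum_{K} c_K \omega^K([V_I,V_J])\Big)\,\omega^I \wedge \omega^J
\]
derived above, I would evaluate $\llangle d\phi, V_L\rrangle = d\phi(V_i, V_J)$ for $L = (i,J) \in {\cal I}_k$. Because the chosen chart satisfies Property (i) of Lemma \ref{lem:GoodChart} (so $i \in {\cal I}_1$ and $J \in {\cal I}_{k-1}$, both among the frame words), pairing the two-form against the two frame vectors $V_i, V_J$ picks out a single term up to sign, namely $\pm\big(V_i c_J - V_J c_i - c_L\big)$, since $\omega^K([V_i,V_J]) = \omega^K(V_L) = \delta^K_L$ when $L$ is itself a frame word of the appropriate length. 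The defining recursion $c_L \triangleq V_i c_J - V_J c_i$ then makes this vanish. An induction on $k$ from $2$ to $r$ (the base case $k=2$ being literally the definition $c_{(i,j)} \triangleq V_i c_j - V_j c_i$) yields $\llangle d\phi, V_L \rrangle = 0$ for all $L \in {\cal I}_2 \cup \cdots \cup {\cal I}_r$, which is Condition (A), hence $\Xi_U = 0$ by the implication recorded before (\ref{eq:IntLie}).

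\textbf{Step 2: Condition (B) is exactly (\ref{eq:ExpCond}).} Since $\Xi_U = 0$, identity (\ref{eq:IntLie}) gives $i(V_\alpha)d\phi - L_{V_\alpha}\Xi_U = i(V_\alpha)d\phi$, and by (\ref{eq:Innerdphi}),
\[
i(V_\alpha) d\phi = \sum_{J : J \neq \alpha}\Big(V_\alpha c_J - V_J c_\alpha - \sum_K c_K \omega^K([V_\alpha,V_J])\Big)\omega^J .
\]
This one-form is a.e. nonzero on $U$ as soon as one of its $\omega^J$-coefficients is, so the hypothesis (\ref{eq:ExpCond}) for some $\alpha \in {\cal I}_1$ and (say) $J \in {\cal I}_r$ is precisely Condition (B). Therefore $i(V_\alpha)d\phi - L_{V_\alpha}\Xi_U \neq 0$ a.e. on $U \cap {\rm supp}\,\phi$ for that $\alpha$, which is (\ref{eq:GenMain}).

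\textbf{Step 3: Conclude.} With (\ref{eq:GenMain}) verified on each chart $U$ covering ${\rm supp}\,\phi$ (the $c_i$ being compactly supported in $U$, and $\phi$ being $C_p^\infty$ since the frame and coframe fields are smooth on $U$), Theorem \ref{thm:GenMain} applies verbatim and delivers the claimed conditional density. \emph{The main obstacle} is purely notational rather than conceptual: one must be careful that pairing $d\phi$ against a pair of frame vector fields $V_i, V_J$ — with $L = (i,J)$ again a frame word thanks to Lemma \ref{lem:GoodChart}(i) — isolates the single coefficient indexed by the pair $(i,J)$ with the correct sign, and that the term $\sum_K c_K\omega^K([V_i,V_J])$ collapses to $c_L$; keeping the total ordering $\prec$ and the antisymmetry of $\omega^I \wedge \omega^J$ straight is the only place an error could creep in, but no genuinely new estimate or idea beyond what is already in Theorem \ref{thm:GenMain} is required.
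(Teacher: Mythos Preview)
Your proposal is correct and follows essentially the same route as the paper: the theorem is stated as a summary of the preceding discussion, whose content is precisely your Steps 1--2 (the inductive definition of $c_I$ forces $\llangle d\phi,V_I\rrangle=0$ so $\Xi_U=0$, and then (\ref{eq:ExpCond}) is one $\omega^J$-coefficient of $i(V_\alpha)d\phi$), after which Theorem \ref{thm:GenMain} is invoked. Your identification of the only delicate point---that $[V_i,V_J]=V_L$ is again a frame vector so $\sum_K c_K\omega^K([V_i,V_J])$ collapses to $c_L$---matches the paper's computation exactly.
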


\begin{rem}
The left hand side of (\ref{eq:ExpCond}) is an expression involving
up to the $r$-th derivatives of $c_{i}$ ($i\in{\cal I}_{1}$). The
property (\ref{eq:ExpCond}) is essentially generic for functions
$c_{i}\in C_{c}^{\infty}(U)$ ($i\in{\cal I}_{1}$).
\end{rem}

\subsubsection*{The step-two case and the Heisenberg group}

Let us consider the simplest hypoelliptic situation, i.e. when $d=2,$
$\dim M=3$ and the vector fields 
\[
{\cal V}=\{V_{1},V_{2},V_{3}\triangleq[V_{1},V_{2}]\}
\]
form a basis of $T_{x}M$ at every point $x\in M$(i.e. a global frame
field over $M$). In this case, Theorems \ref{thm:GenMain} and \ref{thm:NonDegHypo}
are simplified substantially. Let $\{\omega^{1},\omega^{2},\omega^{3}\}$
be the coframe of ${\cal V}.$ The definition (\ref{eq:XiCoF}) of
the one-form $\Xi$ reads
\[
\Xi=-d\phi(V_{1},V_{2})\omega^{3}.
\]
According to the identity (\ref{eq:IntLie}) and the anti-symmetry
of $d(\phi-\Xi)$ as a bilinear form on vector fields, the condition
(\ref{eq:GenMain}) in Theorem \ref{thm:GenMain} is equivalent to
that 
\[
d(\phi+d\phi(V_{1},V_{2})\omega^{3})\neq0\ \ \ \text{a.e.}\ \text{on supp}\phi.
\]
In addition, Conditions (A) and (B) in the last section simply reads
\[
d\phi(V_{1},V_{2})=0\ \text{and }d\phi\neq0\ \text{a.e.}\ \text{on supp\ensuremath{\phi}.}
\]
In terms of coefficients of $\phi$ with respect to $\{\omega^{1},\omega^{2},\omega^{3}\}$,
we have the following direct corollary of Theorem \ref{thm:NonDegHypo}.
\begin{cor}
\label{cor:Step2Explicit}Consider a one-form
\begin{equation}
\phi=c_{1}\omega^{1}+c_{2}\omega^{2}+(V_{1}c_{2}-V_{2}c_{1})\omega^{3},\label{eq:S2Form}
\end{equation}
where $c_{1},c_{2}\in C_{p}^{\infty}(M)$ . Suppose that $d\phi\neq0$
a.e. inside the support of $\phi$. Then conditional on the event
that ``$X$ enters ${\rm supp\phi}$'', the stochastic line integral
$\int_{0}^{T}\phi(dX_{t})$ has a density with respect to the Lebesgue
measure.
\end{cor}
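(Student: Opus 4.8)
The plan is to read off Corollary \ref{cor:Step2Explicit} directly from Theorem \ref{thm:NonDegHypo} (equivalently from Theorem \ref{thm:GenMain}) specialised to the step-two frame. Here $d=2$, $\dim M=3$ and $\mathcal{V}=\{V_{1},V_{2},V_{3}=[V_{1},V_{2}]\}$ is a global frame, so one may take the single chart $U=M$ and, in the notation of Lemma \ref{lem:GoodChart}, $\mathcal{I}_{1}=\{1,2\}$, $\mathcal{I}_{2}=\{(1,2)\}$, $r=2$, with dual coframe $\{\omega^{1},\omega^{2},\omega^{3}\}$. Since $c_{1},c_{2}\in C_{p}^{\infty}(M)$ and the $V_{\alpha}$ are $C_{b}^{\infty}$, the frame coefficients of $\phi$ in (\ref{eq:S2Form}) are $c_{1}$, $c_{2}$, $V_{1}c_{2}-V_{2}c_{1}$, so $\phi\in C_{p}^{\infty}$ and Theorems \ref{thm:GenMain} and \ref{thm:NonDegHypo} are applicable.

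The first step is to recognise (\ref{eq:S2Form}) as the one-form produced by the recursion of Theorem \ref{thm:NonDegHypo}: applied to the unique word $I=(1,2)\in\mathcal{I}_{2}$ (so $i=1$, $J=2\in\mathcal{I}_{1}$), that recursion yields exactly $c_{(1,2)}=V_{1}c_{2}-V_{2}c_{1}$. Feeding this into the coefficient formula for $d\phi$ obtained just before Theorem \ref{thm:NonDegHypo}, the $\omega^{1}\wedge\omega^{2}$-coefficient of $d\phi$ equals $V_{1}c_{2}-V_{2}c_{1}-c_{(1,2)}=0$; equivalently Condition (A) holds in the form $d\phi(V_{1},V_{2})=0$, so $\Xi=-d\phi(V_{1},V_{2})\omega^{3}=0$ and, by (\ref{eq:IntLie}), the criterion (\ref{eq:GenMain}) reduces to the requirement that $i(V_{1})d\phi$ and $i(V_{2})d\phi$ do not vanish simultaneously, a.e.\ on ${\rm supp}\phi$.

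The second step is to match this with the stated hypothesis. Since $d\phi(V_{1},V_{2})=0$ and $\{V_{1},V_{2},V_{3}\}$ is a frame, we have $d\phi=d\phi(V_{1},V_{3})\,\omega^{1}\wedge\omega^{3}+d\phi(V_{2},V_{3})\,\omega^{2}\wedge\omega^{3}$ and $i(V_{\alpha})d\phi=d\phi(V_{\alpha},V_{3})\,\omega^{3}$ for $\alpha=1,2$, whence $\{d\phi=0\}=\{i(V_{1})d\phi=0\}\cap\{i(V_{2})d\phi=0\}$; by (\ref{eq:Innerdphi}) the $\omega^{3}$-coefficient of $i(V_{\alpha})d\phi$ is exactly the left side of (\ref{eq:ExpCond}) with $J=(1,2)$. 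Hence ``$d\phi\neq0$ a.e.\ inside ${\rm supp}\phi$'' is precisely Condition (B), and Theorem \ref{thm:NonDegHypo} (equivalently Theorem \ref{thm:GenMain}) yields the conditional absolute continuity of $\int_{0}^{T}\phi(dX_{t})$, completing the proof.

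I expect no genuine obstacle, as the statement is a corollary; the only care needed is bookkeeping: checking that the recursively defined coefficient $c_{(1,2)}$ coincides with the one written in (\ref{eq:S2Form}) — so that Condition (A) is automatic and $\Xi$ vanishes — and observing that, once $\Xi=0$, the single scalar condition $d\phi\neq0$ a.e.\ already captures the full criterion (\ref{eq:GenMain}) because $d\phi$ is then determined on the frame $\mathcal{V}$ by its two components $i(V_{1})d\phi$ and $i(V_{2})d\phi$.
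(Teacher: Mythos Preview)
Your argument is correct and follows essentially the same route as the paper: the discussion immediately preceding Corollary~\ref{cor:Step2Explicit} records exactly the two observations you use, namely that the choice $c_{3}=V_{1}c_{2}-V_{2}c_{1}$ forces $d\phi(V_{1},V_{2})=0$ (so $\Xi=0$ and Condition~(A) holds), and that via (\ref{eq:IntLie}) the criterion (\ref{eq:GenMain}) then collapses to $d\phi\neq0$ a.e.\ on ${\rm supp}\phi$. The only remark worth adding is that in passing from ``$i(V_{1})d\phi$ and $i(V_{2})d\phi$ do not vanish simultaneously a.e.'' to the hypothesis (\ref{eq:GenMain}) ``for some $\alpha$'' you are implicitly invoking the proof (Lemma~\ref{lem:HypoCrit}) rather than the literal statement of Theorem~\ref{thm:GenMain}; the paper does the same when it declares the two conditions ``equivalent'', so this is not a gap.
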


We conclude with an explicit example: the \textit{Heisenberg group}.
More precisely, we consider $M=\mathbb{R}^{3}$, where the vector
fields $V_{1},V_{2}$ are given by

\[
V_{1}=\partial_{x}-y\partial_{z},\ V_{2}=\partial_{y}+x\partial_{z}
\]
respectively. In this case, the solution to the RDE (\ref{eq:MainRDE})
is explicitly given by the original fBM $B$ coupled with the associated
\textit{L\'evy area process} 
\[
X_{t}=\big(B_{t}^{x},B_{t}^{y},\int_{0}^{t}B_{s}^{x}dB_{s}^{y}-\int_{0}^{t}B_{s}^{y}dB_{s}^{x}\big)_{0\leqslant t\leqslant T}.
\]
By explicit calculation, it is easily seen that $[V_{1},V_{2}]=2\partial_{z}$.
In particular, ${\cal V}\triangleq\{V_{1},V_{2},[V_{1},V_{2}]\}$
is a global frame field. Its coframe is found to be 
\[
\omega^{1}=dx,\ \omega^{2}=dy,\ \omega^{3}=\frac{y}{2}dx-\frac{x}{2}dy+\frac{1}{2}dz.
\]
Let $\phi=c_{i}\omega^{i}$ where $c_{i}\in C_{p}^{\infty}(\mathbb{R}^{3})$.
Under Cartesian coordinates, we have
\begin{equation}
\phi=\big(c_{1}+\frac{yc_{3}}{2}\big)dx+\big(c_{2}-\frac{xc_{3}}{2}\big)dy+\frac{1}{2}c_{3}dz.\label{eq:PhiHeisenberg}
\end{equation}
Let us further assume that $c_{1},c_{2}$ depend only on the $x,y$
coordinates. Define 
\[
c_{3}\triangleq-V_{2}c_{1}+V_{1}c_{1}=-\partial_{y}c_{1}+\partial_{x}c_{2},
\]
so that $d\phi(V_{1},V_{2})=0$ as seen before. Note that $c_{3}$
also depends only on $x,y$. We obtain from (\ref{eq:PhiHeisenberg})
that 
\begin{equation}
\begin{cases}
d\phi(\partial_{x},\partial_{z})=\frac{1}{2}\partial_{x}c_{3}=\frac{1}{2}\big(-\partial_{xy}^{2}c_{1}+\partial_{xx}^{2}c_{2}\big),\\
d\phi(\partial_{y},\partial_{z})=\frac{1}{2}\partial_{y}c_{3}=\frac{1}{2}\big(-\partial_{yy}^{2}c_{1}+\partial_{xy}^{2}c_{2}\big).
\end{cases}\label{eq:Heisenberg}
\end{equation}

As a consequence, as long as the functions $(c_{1},c_{2})$ are chosen
such that 
\begin{equation}
\big(-\partial_{xy}^{2}c_{1}+\partial_{xx}^{2}c_{2}\big)\cdot\big(-\partial_{yy}^{2}c_{1}+\partial_{xy}^{2}c_{2}\big)\neq0\ \ \ \text{a.e. in {\rm supp\ensuremath{\phi}}},\label{eq:HeisenbergCond}
\end{equation}
the non-degeneracy of the line integral $\int_{0}^{T}\phi(dX_{t})$
holds. Since there are no a priori constraints on $c_{1},c_{2},$
the property (\ref{eq:HeisenbergCond}) is apparently generic. 

\subsection{\label{subsec:ILI}Iterated line integrals}

We now turn to the stuy of an extended signature 
\[
F=\int_{0<t_{1}<\cdots<t_{m}<T}\phi_{1}(dX_{t_{1}})\cdots\phi_{m}(dX_{t_{m}})\ \ \ (m\geqslant2).
\]
We consider two typical situations: (i) the supports of the one-forms
$\phi_{1},\cdots,\phi_{m}$ are mutually disjoint, or (ii) they all
have common support. As we will see, in the first case the conditions
provided by Theorem \ref{thm:GenMain} (imposed on each $\phi_{i}$)
continue to ensure the non-degeneracy of $F$. In the second case,
we demonstrate that it is possible to have all $\phi_{i}$'s being
exact while $F$ is non-degenerate, which is surprising in contrast
to the case of $m=1$.

We first prepare a lemma that will be used in both cases. It is a
natural extension of (\ref{eq:EllipPf1}).
\begin{lem}
For $k=1,\cdots,m,$ we set 
\begin{align}
G_{t}^{k} & \triangleq\int_{0<t_{1}<\cdots<t_{k-1}<t}\phi_{1}(dX_{t_{1}})\cdots\phi_{k-1}(dX_{t_{k-1}}),\label{eq:Gk}\\
H_{t}^{k} & \triangleq\int_{t<t_{k+1}<\cdots<t_{m}<T}\phi_{k+1}(dX_{t_{k+1}})\cdots\phi_{m}(dX_{t_{m}}),\label{eq:Hk}
\end{align}
where $G_{t}^{1}=H_{t}^{m}\triangleq1.$ Suppose that $DF(w)=0$.
Then at the path $w$ we have 
\begin{equation}
\sum_{k=1}^{m}\big(\int_{t}^{T}G_{s}^{k}H_{s}^{k}d\zeta_{s}^{k}\cdot\Phi_{t}^{-1}+G_{t}^{k}H_{t}^{k}\phi_{k}(X_{t})\big)\cdot V_{\alpha}(X_{t})=0\label{eq:LengthOneIIG}
\end{equation}
for all $\alpha=1,\cdots,d$ and $t\in[0,T]$, where 
\begin{equation}
\zeta_{t}^{k}\triangleq\int_{0}^{t}d(\phi_{k}\cdot V_{\alpha})(X_{s})\Phi_{s}dw_{s}^{\alpha}.\label{eq:Zetak}
\end{equation}
\end{lem}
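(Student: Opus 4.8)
The plan is to mimic the proof of Lemma~\ref{lem:HDerSLI}, but now applied to the iterated integral $F$, carefully tracking the product structure. First I would write $F$ as an iterated Young/rough integral in the driving path $w$ and differentiate along a Cameron--Martin direction $h$. Since $F$ depends on $w$ both through the integrand one-forms evaluated at $X_{t_i}(w)$ and through the increments $dX^{i}_{t_i}(w)$, the derivative $D_hF$ splits, by the Leibniz rule applied to each of the $m$ slots, into a sum over $k=1,\dots,m$ of terms of the form
\[
\int_{0<t_1<\cdots<t_m<T}\phi_1(dX_{t_1})\cdots \big(\text{perturbed }k\text{-th factor}\big)\cdots\phi_m(dX_{t_m}).
\]
For the $k$-th slot, exactly as in (\ref{eq:HDerSLIPf}), the perturbation produces the expression $\tfrac{\partial}{\partial x^j}(\phi_{k,i}V^i_\alpha)(X_{t_k})D_hX^{j}_{t_k}\,dw^\alpha_{t_k}+\phi_{k,i}(X_{t_k})V^i_\alpha(X_{t_k})\,dh^\alpha_{t_k}$; integrating out the $t_j$ with $j\ne k$ simply replaces the other factors by $G^k_{t_k}$ (the ``past'' part, from (\ref{eq:Gk})) and $H^k_{t_k}$ (the ``future'' part, from (\ref{eq:Hk})).

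Next I would substitute the variational formula (\ref{eq:MDerviaJac}), $D_hX_t=\Phi_t\int_0^t\Phi_s^{-1}V_\alpha(X_s)dh^\alpha_s$, into the $dw$-type contribution of each slot, exactly as in the proof of Lemma~\ref{lem:HDerSLI}. Writing $\zeta^k$ as in (\ref{eq:Zetak}), the $dw$-contribution from slot $k$ becomes $\int_0^T G^k_{t}H^k_{t}\,d\zeta^k_{t}\cdot\big(\int_0^{t}\Phi_s^{-1}V_\alpha(X_s)dh^\alpha_s\big)$, and an integration by parts in $t$ converts this into $\int_0^T\big(\int_t^T G^k_sH^k_s\,d\zeta^k_s\big)\cdot\Phi_t^{-1}V_\alpha(X_t)\,dh^\alpha_t$. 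Collecting this with the $dh$-contribution $\int_0^T G^k_tH^k_t\,\phi_k(X_t)\cdot V_\alpha(X_t)\,dh^\alpha_t$ and summing over $k$, we obtain $D_hF(w)=\int_0^T\big[\sum_{k=1}^m\big(\int_t^T G^k_sH^k_s\,d\zeta^k_s\cdot\Phi_t^{-1}+G^k_tH^k_t\phi_k(X_t)\big)\cdot V_\alpha(X_t)\big]dh^\alpha_t$. Setting $D_hF(w)=0$ for all $h\in\mathcal H$ and invoking Lemma~\ref{lem:fBMProp}(i) then yields (\ref{eq:LengthOneIIG}).

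The step I expect to require the most care is the bookkeeping of the Leibniz expansion over the $m$ slots and verifying that, after integrating out all variables except $t_k$, the remaining factors assemble precisely into $G^k_{t_k}H^k_{t_k}$ with the stated conventions $G^1\equiv H^m\equiv1$; in particular one must check that the orientation of the simplex is respected so that ``past'' gives $G^k$ and ``future'' gives $H^k$. The analytic justifications — that all the iterated integrals against $dh$ make sense in the Young sense via the Friz--Victoir embedding of $\mathcal H$, that $\zeta^k$ is well-defined as an RDE component, and that one may differentiate under the integral sign — are routine and identical to those flagged in the technical remark following Lemma~\ref{lem:HDerSLI}, so I would simply reference that discussion rather than repeat it.
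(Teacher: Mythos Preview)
Your proposal is correct and follows essentially the same route as the paper's proof: apply the Leibniz rule across the $m$ slots, reduce each slot to the single-integral computation of Lemma~\ref{lem:HDerSLI} with the remaining factors collapsing to $G_t^kH_t^k$, integrate by parts to put the $dw$-contribution in the form $\int_t^T G_s^kH_s^k\,d\zeta_s^k$, and finish with Lemma~\ref{lem:fBMProp}(i). The paper records the same steps with the shorthand $\eta_t\triangleq\int_0^t\Phi_s^{-1}V_\alpha(X_s)\,dh_s^\alpha$ and splits the result as $A_1+A_2$; your write-up is a faithful expansion of that.
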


\begin{proof}
As in the proof of Lemma \ref{lem:HDerSLI}, given any $h\in{\cal H}$
we have
\begin{align*}
D_{h}F(w) & =\sum_{k=1}^{m}\int_{0<t_{1}<\cdots<t_{k}<\cdots<t_{m}<T}\phi_{1}(dX_{t_{1}})\cdots D_{h}\phi_{k}(dX_{t_{k}})\cdots\phi_{m}(dX_{t_{m}})\\
 & =\sum_{k=1}^{m}\big(\int_{0<\cdots<t_{k}<\cdots<T}\cdots(d\zeta_{t_{k}}^{k}\cdot\eta_{t_{k}})\cdots\\
 & \ \ \ +\int_{0<\cdots<t_{k}<\cdots<T}\cdots(\phi_{k}\cdot V_{\alpha})(X_{t_{k}})dh_{t_{k}}^{\alpha}\cdots\big)\\
 & =\sum_{k=1}^{m}\big(\int_{0}^{T}G_{t}^{k}H_{t}^{k}(d\zeta_{t}^{k}\cdot\eta_{t})+\int_{0}^{T}G_{t}^{k}H_{t}^{k}(\phi_{k}\cdot V_{\alpha})(X_{t})dh_{t}^{\alpha}\big)\\
 & =:A_{1}+A_{2},
\end{align*}
where $\eta_{t}\triangleq\int_{0}^{t}\Phi_{s}^{-1}V_{\alpha}(X_{s})dh_{s}^{\alpha}.$
The same integration by parts argument as in the proof of Lemma \ref{lem:HDerSLI}
yields that
\[
A_{1}=\sum_{k=1}^{m}\int_{0}^{T}\int_{t}^{T}G_{s}^{k}H_{s}^{k}d\zeta_{s}^{k}\cdot\Phi_{t}^{-1}V_{\alpha}(X_{t})dh_{t}^{\alpha}.
\]
As a consequence, we have 
\[
D_{h}F(w)=\sum_{k=1}^{m}\int_{0}^{T}\big(\int_{t}^{T}G_{s}^{k}H_{s}^{k}d\zeta_{s}^{k}\cdot\Phi_{t}^{-1}V_{\alpha}(X_{t})+G_{t}^{k}H_{t}^{k}(\phi_{k}\cdot V_{\alpha})(X_{t})\big)dh_{t}^{\alpha}.
\]
Since $D_{h}F(w)=0$ for all $h\in{\cal H},$ the result thus follows
from Lemma \ref{lem:fBMProp} (i).
\end{proof}

\subsubsection{The case of disjoint supports}

Let $\phi_{1},\cdots,\phi_{m}$ be smooth one-forms such that ${\rm supp}\phi_{i}\cap{\rm supp}\phi_{j}=\emptyset$
for all $i\neq j$. Define $E$ to be the event that ``there exist
times $t_{1}<\cdots<t_{m}$ such that $X_{t_{i}}\in({\rm supp}\phi_{i})^{\circ}$
for all $i$''. From the definition of $F,$ it is not hard to see
in a deterministic way that the line integral $F$ is identically
zero on $E^{c}$. Our main result in this case is stated as follows.
\begin{thm}
\label{thm:ExtSigDisSup}Suppose that each $\phi_{i}$ satisfies the
conditions in Theorem \ref{thm:GenMain}. Then conditional on the
event $E$, the extended signature $F$ has a density with respect
to the Lebesgue measure.
\end{thm}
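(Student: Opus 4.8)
The plan is to follow the same scheme as in the single line-integral case, reducing the problem to showing that $E \subseteq \{DF \neq 0\}$ modulo a $\mathbb{P}$-null set and then invoking Theorem \ref{thm:Malliavin1D}. So suppose $w \in E$ is a fBM path (away from the standard null set where the rough path lift and true roughness fail) at which $DF(w) = 0$. By the preceding lemma we have the constraint \eqref{eq:LengthOneIIG} at $w$. The crucial observation is that the supports are disjoint: if $t$ lies in a time interval where $X_t(w)$ sits in $({\rm supp}\,\phi_k)^{\circ}$, then for that $t$ all terms in the sum \eqref{eq:LengthOneIIG} with index $j \neq k$ vanish, because $\phi_j(X_t) = 0$ and, more importantly, $\int_t^T G_s^j H_s^j\, d\zeta_s^j$ only accumulates from $\zeta_s^j = \int_0^s d(\phi_j \cdot V_\alpha)(X_u)\Phi_u\, dw_u^\alpha$, which is locally constant in a neighbourhood of such $t$ since $d(\phi_j\cdot V_\alpha) = 0$ off ${\rm supp}\,\phi_j$. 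Hence on that time interval the constraint collapses to
\[
\big(\textstyle\int_t^T G_s^k H_s^k\, d\zeta_s^k \cdot \Phi_t^{-1} + G_t^k H_t^k \phi_k(X_t)\big)\cdot V_\alpha(X_t) = 0, \quad \forall \alpha.
\]

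Now I would factor out the scalar prefactors. On the event $E$, for each $k$ there is (by continuity) a rational time $r_k$ at which $X_{r_k}(w) \in ({\rm supp}\,\phi_k)^{\circ}$, and one can arrange $r_1 < \cdots < r_m$; on a suitable sub-interval around $r_k$ the scalars $G_t^k$ and $H_t^k$ are themselves rough integrals against $B$ of controlled integrands, and $G_{r_k}^k H_{r_k}^k \neq 0$ on a large event — indeed $G^k$ and $H^k$ are themselves iterated line integrals over one-forms with \emph{disjoint} supports, so the same argument applied inductively (or the observation that $G^k, H^k$ are continuous and generically nonzero, which needs its own small argument) shows we may further restrict to an event where $G_t^k H_t^k$ is bounded away from $0$ near $t = r_k$. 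Dividing through and writing $\hat\zeta^k_t \triangleq \int_0^t d(\phi_k \cdot V_\alpha)(X_s)\Phi_s\, dw_s^\alpha / (\text{localised scalar})$ — more cleanly, differentiating the relation in $t$ and using Lemma \ref{lem:fBMProp} (ii) together with Lemma \ref{lem:Pullback} exactly as in the proof of Lemma \ref{lem:ConsAllDeg} and Lemma \ref{lem:HypoCrit} — I recover, restricted to the times $X_t(w)$ lies in a fixed local chart $U$ of ${\rm supp}\,\phi_k$, the same geometric identity $i(V_\alpha)d\phi_k = L_{V_\alpha}\Xi_U$ for all $\alpha$. Since $\phi_k$ satisfies the hypotheses of Theorem \ref{thm:GenMain}, the set where this identity holds is Lebesgue-null in $U \cap {\rm supp}\,\phi_k$; excluding the further $\mathbb{P}$-null set $\bigcup_r \{X_r \in \Lambda_k\}$ over rational $r$ (using absolute continuity of the marginal law of $X_r$ from \cite{CF10}) produces the contradiction, whence $DF(w) \neq 0$ on $E$ off a null set.

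The main obstacle is the bookkeeping around the scalar prefactors $G^k_t H^k_t$: one has to verify (a) that they are indeed locally constant or smooth enough in $t$ that the differentiation-and-true-roughness argument goes through after dividing, and (b) that one can pass to an event of full conditional probability on which $G^k_{r_k}H^k_{r_k} \neq 0$ for the relevant $k$. For (a) the point is that $d\zeta^k$ is supported (in time) on the excursions of $X$ into ${\rm supp}\,\phi_k$, so near $r_k$ the integral $\int_t^T G_s^k H_s^k\, d\zeta_s^k$ is a genuine rough integral controlled by $B$ and the argument of Lemma \ref{lem:HypoCrit} applies verbatim once the nonzero scalar is divided out. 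For (b), note $G^k$ and $H^k$ are extended signatures of strictly shorter length whose one-forms still have pairwise disjoint supports, so an induction on $m$ handles the nondegeneracy of these factors; alternatively one argues directly that $\{G^k_{r_k} = 0\}$ or $\{H^k_{r_k} = 0\}$ is either null or can be absorbed because on its complement within $E$ we still win. I would organize the write-up as: (1) restate the reduction to $E \subseteq \{DF \neq 0\}$; (2) localize using disjointness to kill cross terms; (3) run the chart-by-chart geometric argument of Lemma \ref{lem:HypoCrit} on each $\phi_k$; (4) conclude via the Lebesgue-null set and rational-times argument as in Theorem \ref{thm:Ellip}.
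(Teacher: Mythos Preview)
Your overall architecture (localize near ${\rm supp}\,\phi_k$, reduce to the geometric identity $i(V_\alpha)d\phi_k=L_{V_\alpha}\Xi_k$, then induct on $m$ to kill the events $\{G^k=0\}$, $\{H^k=0\}$) matches the paper's. However, the localization step as you state it contains a genuine error. You claim that for $t$ with $X_t(w)\in({\rm supp}\,\phi_k)^\circ$ the $j\neq k$ summands in \eqref{eq:LengthOneIIG} \emph{vanish}, because $\phi_j(X_t)=0$ and $\int_t^T G_s^jH_s^j\,d\zeta_s^j$ is ``locally constant''. Locally constant in $t$ is correct, but that does not make the term zero: the integral $\int_t^T G_s^jH_s^j\,d\zeta_s^j$ picks up contributions from every later excursion of $X$ into ${\rm supp}\,\phi_j$ and is typically a nonzero covector. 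So your ``collapsed'' single-$k$ relation is false as written, and consequently the subsequent ``divide by $G_t^kH_t^k$'' step is not available, since the surviving $\rho$-part carries no such prefactor.

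The paper repairs exactly this point. It keeps the full covector $\rho_t\triangleq\sum_j\int_t^T G_s^jH_s^j\,d\zeta_s^j$ intact and first upgrades \eqref{eq:LengthOneIIG} to all words $I$ (the analogue of Lemma~\ref{lem:ConsAllDeg}, namely \eqref{eq:ConsAllDegIIG}); the disjoint-support hypothesis enters there to kill the cross terms $\phi_k\wedge\phi_{k+1}$ and $\psi_{k+1,I}\phi_k-\psi_{k,I}\phi_{k+1}$ that arise from differentiating $G_t^kH_t^k$. Only after inverting with the local frame to obtain $\rho+G_v^kH_v^k\phi_k\cdot\Phi=G_v^kH_v^k\Xi_k\cdot\Phi$ near $u$ does one differentiate in $t$: then $d\rho_t$ reduces to the single $k$-th summand (the others being locally constant) and $d(G_v^kH_v^k)=0$ by disjointness, yielding $G_v^kH_v^k\bigl(i(V_\alpha)d\phi_k-L_{V_\alpha}\Xi_k\bigr)=0$. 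In short, disjointness is used at the level of \emph{differentials}, not values; once you make that correction your outline becomes the paper's proof. Note also that the induction on $m$ is not optional bookkeeping for point (b) but the backbone of the argument: it is precisely what turns $\{G_r^k=0\}\cap E_r'$ and $\{H_r^k=0\}\cap E_r''$ into $\mathbb{P}$-null sets.
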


The following lemma, which is an extension of Lemma \ref{lem:ConsAllDeg},
is needed for our proof of Theorem \ref{thm:ExtSigDisSup}.
\begin{lem}
For each $k=1,\cdots,m$, we define $\{\psi_{k,I}:I\in{\cal W}\}$
by $\psi_{k,I}\triangleq0$ if $I\in{\cal W}_{1}$ and 
\[
\psi_{k,I}\triangleq d\phi_{k}(V_{i},V_{J})+V_{i}\psi_{k,J}
\]
for $I=(i,J)$. Suppose that $DF(w)=0$. Then at the path $w$, we
have
\begin{equation}
(\rho_{t}\cdot\Phi_{t}^{-1}+\sum_{k=1}^{m}G_{t}^{k}H_{t}^{k}\phi_{k})\cdot V_{I}+\sum_{k=1}^{m}G_{t}^{k}H_{t}^{k}\psi_{k,I}=0\ \ \ \forall I\in{\cal W},t\in[0,T],\label{eq:ConsAllDegIIG}
\end{equation}
where $\rho_{t}\triangleq\sum_{k=1}^{m}\int_{t}^{T}G_{s}^{k}H_{s}^{k}d\zeta_{s}^{k}$
and $G_{t}^{k},H_{t}^{k},\zeta_{t}^{k}$ are defined by (\ref{eq:Gk},
\ref{eq:Hk}, \ref{eq:Zetak}) respectively.
\end{lem}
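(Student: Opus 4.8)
The proof of this lemma is an induction on the length of the word $I$, following exactly the pattern of Lemma \ref{lem:ConsAllDeg} but carrying the extra weight functions $G^k_t H^k_t$ through each step. The plan is the following.

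\textbf{Base case.} When $I \in {\cal W}_1$, the relation \eqref{eq:ConsAllDegIIG} is precisely \eqref{eq:LengthOneIIG} rewritten: one takes the identity of the previous lemma, reads $\int_t^T G^k_s H^k_s\, d\zeta^k_s$ as the $t$-slice of the path $\rho$, and notes that $\psi_{k,I} = 0$ for $I \in {\cal W}_1$. So there is nothing to prove in degree one.

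\textbf{Induction step.} Assume \eqref{eq:ConsAllDegIIG} holds for all words of length $\le k$, fix $I \in {\cal W}_k$, and differentiate the scalar relation in $t$ along the driving rough path $w$. I expect three kinds of terms. First, $d\rho_t = -\sum_k G^k_t H^k_t\, d\zeta^k_t$ times $\Phi_t^{-1} V_I$; combined with the definition of $\zeta^k$ this produces $-\sum_k G^k_t H^k_t\, V_I(\phi_k \cdot V_\alpha)(X_t)\,dw^\alpha_t$ after pairing. Second, $\rho_t \cdot d(\Phi_t^{-1} V_I)$, which by Lemma \ref{lem:Pullback} equals $\rho_t \cdot \Phi_t^{-1}[V_\alpha, V_I]\,dw^\alpha_t$. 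Third, the differential of $\sum_k G^k_t H^k_t(\phi_k \cdot V_I + \psi_{k,I})$: here one must differentiate the products $G^k_t H^k_t$ as well. This is the one genuinely new feature compared to Lemma \ref{lem:ConsAllDeg}. Using $dG^k_t = G^k_t \phi_{k-1}(dX_t)$ for $k\ge 2$ (and $dG^1 = 0$) and $dH^k_t = -H^k_t \phi_{k+1}(dX_t)$ for $k \le m-1$ (and $dH^m = 0$), one sees that the cross terms arising from differentiating the weights telescope: the $dG^k$ contribution at index $k$ cancels against the $dH^{k-1}$ contribution at index $k-1$, because both are $G^k_t H^k_t \phi_{k-1}(dX_t)$ with opposite signs once reindexed (and $\phi_{k-1}(dX_t) = (\phi_{k-1}\cdot V_\alpha)(X_t)\,dw^\alpha_t$). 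Hence after this cancellation the weight functions $G^k_t H^k_t$ simply pass through the differentiation as if they were constants, and what remains is, term by term in $k$, exactly the computation in the proof of Lemma \ref{lem:ConsAllDeg}.

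\textbf{Conclusion of the step.} After collecting terms, the differentiated identity reads
\[
\Big(\rho_t \cdot \Phi_t^{-1}[V_\alpha, V_I] + \sum_{k=1}^m G^k_t H^k_t\big(V_i(\phi_k\cdot V_I) - V_I(\phi_k\cdot V_i) + V_i \psi_{k,I}\big)\Big)dw^\alpha_t = 0.
\]
By Lemma \ref{lem:fBMProp}(ii) the bracketed coefficient vanishes for every $\alpha$ and every $t$, and applying the algebraic identity $V_\alpha(\phi_k\cdot V_I) - V_I(\phi_k\cdot V_\alpha) = d\phi_k(V_\alpha, V_I) + \phi_k\cdot[V_\alpha, V_I]$ (proved inside Lemma \ref{lem:ConsAllDeg}) converts this into
\[
\big(\rho_t \cdot \Phi_t^{-1} + \textstyle\sum_k G^k_t H^k_t \phi_k\big)\cdot[V_\alpha, V_I] + \sum_k G^k_t H^k_t\big(d\phi_k(V_\alpha, V_I) + V_\alpha \psi_{k,I}\big) = 0,
\]
which is \eqref{eq:ConsAllDegIIG} for the word $(\alpha, I) \in {\cal W}_{k+1}$ in view of the recursion defining $\psi_{k,I}$. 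This completes the induction.

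\textbf{Main obstacle.} The only nontrivial point, and the place to be careful, is the bookkeeping of the weight functions: verifying that the extra terms generated by $dG^k$ and $dH^k$ cancel in pairs across consecutive indices. Everything else is a verbatim transcription of the single-form argument. I would therefore write the telescoping cancellation out explicitly and then simply cite the proof of Lemma \ref{lem:ConsAllDeg} for the remaining algebra rather than repeating it.
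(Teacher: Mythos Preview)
Your induction skeleton is right, but the ``telescoping'' claim for the weight terms is incorrect, and this is precisely where the disjoint-support hypothesis enters (which your argument never uses). First, your differentials are misstated: $dG^{k}_{t}=G^{k-1}_{t}\,\phi_{k-1}(dX_{t})$ and $dH^{k}_{t}=-H^{k+1}_{t}\,\phi_{k+1}(dX_{t})$, not $G^{k}_{t}$ and $H^{k}_{t}$. More importantly, the two pieces you want to cancel are
\[
G^{k-1}_{t}H^{k}_{t}\,(\phi_{k-1}\!\cdot V_{\alpha})\,(\phi_{k}\!\cdot V_{I}+\psi_{k,I})
\quad\text{and}\quad
-\,G^{k-1}_{t}H^{k}_{t}\,(\phi_{k}\!\cdot V_{\alpha})\,(\phi_{k-1}\!\cdot V_{I}+\psi_{k-1,I}),
\]
coming from $dG^{k}\cdot H^{k}$ at index $k$ and $G^{k-1}\cdot dH^{k-1}$ at index $k-1$. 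These share the prefactor $G^{k-1}_{t}H^{k}_{t}$ but have different one-form pairings; they do not cancel by reindexing. Summing over $k$ one obtains instead
\[
\sum_{k}G^{k}_{t}H^{k+1}_{t}\Big(\phi_{k}\wedge\phi_{k+1}(V_{\alpha},V_{I})+(\psi_{k+1,I}\,\phi_{k}-\psi_{k,I}\,\phi_{k+1})\!\cdot V_{\alpha}\Big),
\]
exactly as in the paper's equation \eqref{eq:ConstAllDegIIGPf}. This residual vanishes not by telescoping but because ${\rm supp}\,\phi_{k}\cap{\rm supp}\,\phi_{k+1}=\emptyset$: at any point $X_{t}$ at most one of $\phi_{k},\phi_{k+1}$ (and hence of $\psi_{k,I},\psi_{k+1,I}$, which are built from derivatives of $\phi_{k}$) is nonzero, so every product above is identically zero.

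Once you replace the telescoping claim by this disjoint-support argument, the rest of your plan is exactly the paper's proof: after killing the cross terms, the identity $V_{\alpha}(\phi_{k}\cdot V_{I})-V_{I}(\phi_{k}\cdot V_{\alpha})=d\phi_{k}(V_{\alpha},V_{I})+\phi_{k}\cdot[V_{\alpha},V_{I}]$ and Lemma~\ref{lem:fBMProp}(ii) give \eqref{eq:ConsAllDegIIG} for $(\alpha,I)$.
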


\begin{proof}
We prove the claim by induction on the length of the word $I$. When
$I\in{\cal W}_{1}$, the claim reduces to the equation (\ref{eq:LengthOneIIG}).
Suppose that (\ref{eq:ConsAllDegIIG}) is true for all words of length
$\leqslant k$. By differentiating (\ref{eq:ConsAllDegIIG}) with
$I\in{\cal W}_{k}$, we have 
\begin{align*}
d\rho\cdot(\Phi^{-1}V_{I})+\rho\cdot d(\Phi^{-1}V_{I})+\sum_{k}d(G_{t}^{k}H_{t}^{k})\cdot(\phi_{k}\cdot V_{I}+\psi_{k,I})\\
+\sum_{k}G_{t}^{k}H_{t}^{k}V_{i}(\phi_{k}\cdot V_{I}+\psi_{k,I})dw^{i} & =0.
\end{align*}
Recall that 
\[
d\rho_{t}=-\sum_{k}G_{t}^{k}H_{t}^{k}d(\phi_{k}\cdot V_{i})\cdot\Phi_{t}dw_{t}^{i},\ d(\Phi_{t}^{-1}V_{I})=\Phi_{t}^{-1}\cdot[V_{i},V_{I}]dw_{t}^{i}.
\]
As a result, we have
\begin{align}
\rho\cdot\Phi^{-1}[V_{i},V_{I}] & +\sum_{k}G_{t}^{k}H_{t}^{k}\big(-V_{I}(\phi_{k}\cdot V_{i})+V_{i}(\phi_{k}\cdot V_{I})+V_{i}\psi_{k,I}\big)\nonumber \\
+\sum_{k}G_{t}^{k}H_{t}^{k+1} & \phi_{k}\wedge\phi_{k+1}(V_{i},V_{I})+\sum_{k}G_{t}^{k}H_{t}^{k+1}(\psi_{k+1,I}\phi_{k}-\psi_{k,I}\phi_{k+1})\cdot V_{i}=0\label{eq:ConstAllDegIIGPf}
\end{align}
for all $i.$ Since ${\rm supp}\phi_{k}\cap{\rm supp}\phi_{k+1}=\emptyset$,
it is readily seen that
\[
\phi_{k}\wedge\phi_{k+1}=0,\ \psi_{k+1,I}\phi_{k}-\psi_{k,I}\phi_{k+1}=0.
\]
In addition, note that 
\[
V_{i}(\phi_{k}\cdot V_{I})-V_{I}(\phi_{k}\cdot V_{i})=d\phi_{k}(V_{i},V_{I})+\phi_{k}\cdot[V_{i},V_{I}].
\]
The equation (\ref{eq:ConstAllDegIIGPf}) thus reduces to
\[
\big(\rho\cdot\Phi^{-1}+\sum_{k}G_{t}^{k}H_{t}^{k}\phi_{k}\big)\cdot[V_{i},V_{I}]+\sum_{k}G_{t}^{k}H_{t}^{k}\big(d\phi_{k}(V_{i},V_{I})+V_{i}(\psi_{k,I})\big)=0.
\]
By the definition of $\{\psi_{k,I}:I\in{\cal W}\}$, the last expression
is equivalent to that 
\[
\big(\rho\cdot\Phi^{-1}+\sum_{k}G_{t}^{k}H_{t}^{k}\phi_{k}\big)\cdot V_{I'}+\sum_{k}G_{t}^{k}H_{t}^{k}\psi_{k,I'}=0
\]
where $I'=(i,I)$. Since $I\in{\cal W}_{k}$ and $i\in\{1,\cdots,d\}$
are arbitrary, we conclude that (\ref{eq:ConsAllDegIIG}) is true
for words of length $k+1$.
\end{proof}
We now prove Theorem \ref{thm:ExtSigDisSup} by induction on the degree
of $F$.

\begin{proof}[Proof of Theorem \ref{thm:ExtSigDisSup}] Consider the
following slightly more general claim:

\vspace{2mm}\noindent (${\bf P}_{m}$) Let $\phi_{1},\cdots,\phi_{m}$
be smooth one-forms with disjoint support and each of them satisfies
the conditions in Theorem \ref{thm:GenMain}. For each pair of $s<t\in[0,T]$,
let $E_{s,t}$ be the event that ``$X$ visits $({\rm supp}\phi_{1})^{\circ},\cdots,({\rm supp}\phi_{m})^{\circ}$
in order over $[s,t]$''. Then 
\[
\left.\int_{s<t_{1}<\cdots<t_{m}<t}\phi_{1}(dX_{t_{1}})\cdots\phi_{m}(dX_{t_{m}})\right|_{E_{s,t}}
\]
admits a density with respect the Lebesgue measure.

\vspace{2mm}\noindent We are going to prove (${\bf P}_{m}$) by induction
on $m$. The case when $m=1$ is just Theorem \ref{thm:GenMain}.
Suppose that the claim is true for iterated integrals of degree less
than $m$ and consider an $m$-th order integral
\[
F=\int_{s<t_{1}<\cdots<t_{m}<t}\phi_{1}(dX_{t_{1}})\cdots\phi_{m}(dX_{t_{m}}).
\]
We wish to show that 
\begin{equation}
w\in E_{s,t}\cap N^{c}\implies DF(w)\neq0,\label{eq:EventRelIIGPf}
\end{equation}
where $N$ is a suitable $\mathbb{P}$-null set to be excluded.

Suppose that $w\in E_{s,t}$ and $DF(w)=0$. Let $k$ be fixed and
consider a time $u$ such that $X_{u}\in({\rm supp}\phi_{k})^{\circ}$
and $X|_{[s,u]}$ (respectively, $X|_{[u,t]}$) visits the supports
of $\phi_{1},\cdots,\phi_{k-1}$ (respectively, of $\phi_{k+1},\cdots,\phi_{m}$).
Such a time $u$ exists as $w\in E_{s,t}$. By the assumption of the
theorem, we may take a chart $U$ near $X_{u}$ on which a local frame
field $\{V_{I}:I\in{\cal I}_{1}\cup\cdots\cup{\cal I}_{r}\}$ is defined
and
\begin{equation}
i(V_{\alpha})d\phi_{k}-L_{V_{\alpha}}\Xi_{k}\neq0\ \ \ \text{a.e. on}\ U\label{eq:NonDegConsIIGPf}
\end{equation}
for some $\alpha$, where under the notation of Section \ref{subsec:HypoCase}
we set 
\[
\Xi_{k}\triangleq\Theta_{k}W^{-1},\ \Theta_{k}\triangleq(\psi_{k,I})_{I\in{\cal I}_{l},1\leqslant l\leqslant r},W\triangleq(V_{I})_{I\in{\cal I}_{l},1\leqslant l\leqslant r}\ \ \ \text{on \ensuremath{U}}.
\]
In a small time neighbourhood $v\in(u-\varepsilon,u+\varepsilon)$,
the equation (\ref{eq:ConsAllDegIIG}) yields 
\[
(\rho\cdot\Phi^{-1}+G_{v}^{k}H_{v}^{k}\phi_{k})\cdot W+G_{v}^{k}H_{v}^{k}\cdot\Theta_{k}=0\iff\rho+G_{v}^{k}H_{v}^{k}\phi_{k}\cdot\Phi=G_{v}^{k}H_{v}^{k}\Xi_{k}\cdot\Phi.
\]
Note that the above relation holds at $k$ (not summing over $k$!)
near $X_{u}.$ By differentiating both sides with respect to $w_{t}^{\alpha},$
we obtain that 
\begin{align}
 & G_{v}^{k}H_{v}^{k}\big(-d(\phi_{k}\cdot V_{\alpha})+V_{\alpha}\phi_{k}+\phi_{k}\cdot DV_{\alpha}\big)+d(G_{v}^{k}H_{v}^{k})\phi_{k}\nonumber \\
 & =G_{v}^{k}H_{v}^{k}\cdot\big(V_{\alpha}\Xi_{k}+\Xi_{k}\cdot DV_{\alpha}\big)+d(G_{v}^{k}H_{v}^{k})\Xi_{k}\label{eq:IIGDSPf}
\end{align}
for all $\alpha$ and $v\in(u-\varepsilon,u+\varepsilon).$

Next, we observe that 
\[
d(G_{v}^{k}H_{v}^{k})=G_{v}^{k-1}H_{v}^{k}\phi_{k-1}(dX_{v})-G_{v}^{k}H_{v}^{k+1}\phi_{k+1}(dX_{v})=0,
\]
since $X_{v}\in{\rm supp}\phi_{k}$ for $v$ close to $u$. As a result,
the equation (\ref{eq:IIGDSPf}) reduces to
\[
G_{v}^{k}H_{v}^{k}\big(-d(\phi_{k}\cdot V_{\alpha})+V_{\alpha}\phi_{k}+\phi_{k}\cdot DV_{\alpha}\big)=G_{v}^{k}H_{v}^{k}\cdot\big(V_{\alpha}\Xi_{k}+\Xi_{k}\cdot DV_{\alpha}\big).
\]
By using the relations
\begin{align*}
-d(\phi_{k}\cdot V_{\alpha})+V_{\alpha}\phi_{k}+\phi_{k}\cdot DV_{\alpha} & =i(V_{\alpha})d\phi_{k},\\
V_{\alpha}\Xi_{k}+\Xi_{k}\cdot DV_{\alpha} & =L_{V_{\alpha}}\Xi_{k},
\end{align*}
we obtain that
\begin{equation}
G_{v}^{k}H_{v}^{k}\big(i(V_{\alpha})d\phi_{k}-L_{V_{\alpha}}\Xi_{k}\big)=0\label{eq:IterPf}
\end{equation}
for all $\alpha$ and $v\in(u-\varepsilon,u+\varepsilon)$. According
to the assumption (\ref{eq:NonDegConsIIGPf}), we conclude that either
$X_{v}$ lives on some Lebesgue null set $C\subseteq U$, or $G_{v}^{k}H_{v}^{k}=0.$

For each $v,$ we set 
\begin{align*}
E_{v}' & \triangleq\{\exists t_{1}<\cdots<t_{k-1}\in(0,v):X_{t_{i}}\in({\rm supp}\phi_{i})^{\circ}\},\\
E_{v}'' & \triangleq\{\exists t_{k+1}<\cdots<t_{m}\in(0,v):X_{t_{i}}\in({\rm supp}\phi_{i})^{\circ}\}
\end{align*}
respectively. To summarise, by continuity we have obtained from (\ref{eq:IterPf})
that 
\begin{align*}
 & w\in E_{s,t}\cap\{DF=0\}\\
 & \implies w\in N\triangleq\bigcup_{r\in\mathbb{Q}\cap(s,t)}\big(\{X_{r}\in C\}\cup(\{G_{r}^{k}=0\}\cap E_{r}')\cup(\{H_{r}^{k}=0\}\cap E_{r}'')\big).
\end{align*}
Since $X_{r}$ has a density, we know that $\{X_{r}\in C\}$ is a
$\mathbb{P}$-null set. In addition, since $G_{r}^{k}$ and $H_{r}^{k}$
are iterated line integrals with degree less than $m$, by the induction
hypothesis both of $G_{r}^{k}|_{E_{r}'}$ and $H_{r}^{k}|_{E_{r}''}$
have densities. In particular,
\[
\{G_{r}^{k}=0\}\cap E_{r}')\cup(\{H_{r}^{k}=0\}\cap E_{r}'')
\]
is also a $\mathbb{P}$-null set. As a result, $\mathbb{P}(N)=0$
and the desired relation (\ref{eq:EventRelIIGPf}) follows. In other
words, we conclude that $DF\neq0$ a.s. on $E_{s,t}$, which implies
the existence of conditional density by Theorem \ref{thm:Malliavin1D}.
This completes the induction step for the claim (${\bf P}_{m}$).

\end{proof}

\subsubsection{The case of common support}

Next, we assume that the supports of $\phi_{1},\cdots,\phi_{m}$ have
a common intersection $S$. Our aim here is to demonstrate a surprising
fact that the extended signature $F$ can still be non-degenerate
even when all the $\phi_{i}$'s are exact and compactly supported
(i.e. $\phi_{i}=df_{i}$ where $f_{i}\in C_{c}^{\infty}(S)$). As
we mentioned in the introduction, this is not possible when $m=1$
(cf. Remark \ref{rem:Deg1Exact} as well). Our result in this case
is stated as follows. We only consider the elliptic situation.
\begin{prop}
Consider an elliptic RDE (\ref{eq:MainRDE}) where $X_{0}=x_{0}\in\mathbb{R}^{d}$.
Let $f_{1},\cdots,f_{m}$ be compactly supported smooth functions.
Suppose that the two-forms 
\[
df_{1}\wedge df_{2},\cdots,df_{m-1}\wedge df_{m}
\]
are linearly independent at $x_{0}$. Then the extended signature
\[
F\triangleq\int_{0<t_{1}<\cdots<t_{m}<T}(df_{1})(dX_{t_{1}})\cdots(df_{m})(dX_{t_{m}})
\]
has a density with respect to the Lebesgue measure.
\end{prop}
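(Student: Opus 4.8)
The plan is to prove, by induction on $m$ with step two, that $DF\neq 0$ almost surely, and then conclude by Theorem \ref{thm:Malliavin1D}. No conditioning event is needed here: since $df_1\wedge df_2,\dots,df_{m-1}\wedge df_m$ are linearly independent at $x_0$, each $df_j(x_0)\neq 0$, so $x_0$ lies in the interior of every $\mathrm{supp}\,f_j$ and $X$ automatically visits all the supports in order near time $0$. Throughout I fix a $\mathbb{P}$-null set $N$ outside of which $w$ admits a canonical rough path lift and is truly rough (Lemma \ref{lem:fBMProp}(ii)), and show that $\{DF=0\}\cap N^{c}$ is $\mathbb{P}$-null in each case.

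The first — and main — step is to extract a clean geometric identity along the solution path from the assumption $DF(w)=0$. Equation \ref{eq:LengthOneIIG} holds whenever $DF(w)=0$ (the lemma producing it assumes nothing about the supports). Since the RDE is elliptic, the matrix $(V_1,\dots,V_d)(X_t)$ is invertible, so after multiplying \ref{eq:LengthOneIIG} by its inverse and then by $\Phi_t$ I obtain
\[
\rho_t+\sum_{k=1}^{m} G_t^{k}H_t^{k}\,\phi_k(X_t)\,\Phi_t=0\qquad\text{for all }t\in[0,T],
\]
where $\phi_k\triangleq df_k$ and $\rho_t\triangleq\sum_{k}\int_t^T G_s^{k}H_s^{k}\,d\zeta_s^{k}$. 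I then differentiate this identity in $t$ by pathwise calculus. Using Lemma \ref{lem:GeoIden} to recognise $-d(\phi_k\cdot V_\alpha)+V_\alpha\phi_k+\phi_k\cdot DV_\alpha=i(V_\alpha)\,d\phi_k$, which vanishes because each $\phi_k$ is \emph{exact}, and then applying Lemma \ref{lem:fBMProp}(ii), all such terms drop and one is left with $\sum_k d(G_t^{k}H_t^{k})\,\phi_k(X_t)\,\Phi_t=0$. Expanding $d(G_t^{k}H_t^{k})=\big(G_t^{k-1}H_t^{k}(\phi_{k-1}\cdot V_\alpha)-G_t^{k}H_t^{k+1}(\phi_{k+1}\cdot V_\alpha)\big)(X_t)\,dw_t^{\alpha}$ exactly as in the proof of Theorem \ref{thm:ExtSigDisSup}, re-indexing, cancelling $\Phi_t$ and using ellipticity a second time (a $2$-form annihilated by every $i(V_\alpha)$ is zero), I arrive at the identity, valid at the path $w$:
\[
\sum_{k=1}^{m-1} G_t^{k}H_t^{k+1}\,(df_k\wedge df_{k+1})(X_t)=0\qquad\text{for all }t\in[0,T].
\]
Call this $(\star)$.

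Now I run the induction. The cases $m=2$ and $m=3$ serve as base cases, since the would-be case $m=1$ genuinely fails. For $m=2$, $(\star)$ says $(df_1\wedge df_2)(X_t)=0$ for all $t$, which at $t=0$ contradicts $(df_1\wedge df_2)(x_0)\neq 0$; hence $\{DF=0\}\cap N^{c}=\emptyset$. For $m\geq 3$, evaluating $(\star)$ at $t=0$ (where $G_0^{k}=\delta_{k,1}$) gives $H_0^{2}\,(df_1\wedge df_2)(x_0)=0$, hence $H_0^{2}=0$. If $m\geq 4$, then $H_0^{2}=\int_{0<t_3<\cdots<t_m<T}df_3(dX_{t_3})\cdots df_m(dX_{t_m})$ is again an $(m-2)$-fold iterated line integral of the same kind, and $\{df_3\wedge df_4,\dots,df_{m-1}\wedge df_m\}$ is linearly independent at $x_0$ as a subfamily of a linearly independent family; by the induction hypothesis $H_0^{2}$ has a density, so $\mathbb{P}(H_0^{2}=0)=0$ and therefore $\{DF=0\}\subseteq N\cup\{H_0^{2}=0\}$ is $\mathbb{P}$-null. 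The remaining base case $m=3$ is the delicate point: here $(\star)$ reads $H_t^{2}(df_1\wedge df_2)(X_t)+G_t^{2}(df_2\wedge df_3)(X_t)=0$, and I differentiate it once more in $t$, apply Lemma \ref{lem:fBMProp}(ii), and evaluate at $t=0$, where $H_0^{2}=G_0^{2}=0$ kill the terms containing derivatives of the $2$-forms, leaving
\[
(df_1\cdot V_\alpha)(x_0)\,(df_2\wedge df_3)(x_0)=(df_3\cdot V_\alpha)(x_0)\,(df_1\wedge df_2)(x_0)\qquad\text{for all }\alpha.
\]
Since $(df_1\wedge df_2)(x_0)\neq 0$ forces $df_1(x_0)\neq 0$, ellipticity provides an $\alpha$ with $(df_1\cdot V_\alpha)(x_0)\neq 0$; dividing by this scalar shows $(df_2\wedge df_3)(x_0)$ is a scalar multiple of $(df_1\wedge df_2)(x_0)$, contradicting linear independence. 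Thus $\{DF=0\}\cap N^{c}=\emptyset$ again. In every case $\mathbb{P}(DF\neq 0)=1$, and Theorem \ref{thm:Malliavin1D} yields the density of $F$.

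The hard part is the derivation of $(\star)$: one must differentiate the identity coming from $DF(w)=0$ while keeping precise track of the $d\phi_k$- and Lie-derivative contributions so that exactness of the one-forms can be exploited, and apply true roughness and ellipticity correctly at each stage; the combinatorics of $d(G_t^{k}H_t^{k})$ and the re-indexing that collapses the expression into a telescoping sum of $i(V_\alpha)(df_k\wedge df_{k+1})$'s is the same bookkeeping already carried out for disjoint supports, now with the extra middle factor $\phi_k(X_t)$. The only other subtlety is the base case $m=3$, which cannot be reduced to a smaller instance of the proposition (the $m=1$ statement is false) and so requires differentiating $(\star)$ one extra time.
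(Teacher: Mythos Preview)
Your proof is correct. The derivation of the key identity $(\star)$ coincides exactly with the paper's, and your treatment of the case $m=2$ is identical as well.

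From there, however, you and the paper diverge in two interesting ways. For the induction step $m\geq 4$, the paper uses linear independence of the two-forms \emph{near} $x_0$ to conclude that all coefficients $G_t^{k}H_t^{k+1}$ vanish for small $t$, then isolates $G_t^{m-1}$ (since $H_t^{m}\equiv 1$) and invokes the induction hypothesis on the $(m-2)$-fold integral $G_r^{m-1}$ over $[0,r]$ for countably many rational $r$. You instead simply evaluate $(\star)$ at $t=0$, where $G_0^{k}=\delta_{k,1}$ collapses the sum to the single term $H_0^{2}(df_1\wedge df_2)(x_0)$, and then apply the induction hypothesis to the $(m-2)$-fold integral $H_0^{2}$ over the full interval $[0,T]$. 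This is cleaner: it avoids the continuity argument and the countable union over rationals, and it only uses $(df_1\wedge df_2)(x_0)\neq 0$ at this stage. For the base case $m=3$, the paper again uses local linear independence to force $G_t^{2}=f_1(X_t)-f_1(x_0)=0$ for small $t$, and then invokes the density of $X_r$ together with a codimension-one submanifold argument. Your second differentiation of $(\star)$ at $t=0$ yields a direct algebraic contradiction with the linear independence hypothesis and in fact shows the stronger statement $\{DF=0\}\cap N^{c}=\emptyset$, with no appeal to the density of $X_r$. Both routes are valid; yours trades a measure-theoretic step for one more application of true roughness.
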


\begin{proof}
Write $\phi_{k}\triangleq df_{k}.$ Let $w$ be an fBM path such that
$DF(w)=0$. According to the equation (\ref{eq:LengthOneIIG}) and
ellipticity, we have 
\[
\sum_{k}\int_{t}^{1}G_{s}^{k}H_{s}^{k}d\zeta_{s}^{k}+\sum_{k}G_{t}^{k}H_{t}^{k}\phi_{k}\cdot\Phi_{t}=0\ \ \ \forall t\in[0,T].
\]
By taking differentiation with respect to $w_{t}^{\alpha}$, we find
that 
\begin{align*}
\sum_{k}G_{t}^{k}H_{t}^{k}\big(-d(\phi_{k}\cdot V_{\alpha})+V_{\alpha}(\phi_{k}) & +\phi_{k}\cdot DV_{\alpha}\big)\\
+\sum_{k}G_{t}^{k}H_{t}^{k+1}\big((\phi_{k}\cdot V_{\alpha})\phi_{k+1} & -(\phi_{k+1}\cdot V_{\alpha})\phi_{k}=0,
\end{align*}
which is equivalent to that 
\[
i(V_{\alpha})\sum_{k}\big(G_{t}^{k}H_{t}^{k}d\phi_{k}+G_{t}^{k}H_{t}^{k+1}\phi_{k}\wedge\phi_{k+1}\big)=0
\]
for all $\alpha=1,\cdots,d$ and $t\in[0,T].$ Again by ellipticity
and the fact that $d\phi_{k}=d^{2}f_{k}=0$, we have 
\begin{equation}
\sum_{k}G_{t}^{k}H_{t}^{k+1}\phi_{k}\wedge\phi_{k+1}=0\label{eq:ExactCons}
\end{equation}
for all $t\in[0,T]$ at the path $w$.

We first consider the case when $m=2$. In this case, the relation
(\ref{eq:ExactCons}) simply reads
\[
(\phi_{1}\wedge\phi_{2})(X_{t}(w))=0\ \ \ \forall t\in[0,T].
\]
By taking $t=0$, we reach a contradiction as $\phi_{1}\wedge\phi_{2}(x_{0})\neq0$
by the assumption. Next, we consider the case when $m=3.$ In this
case, the relation (\ref{eq:ExactCons}) becomes 
\[
H_{t}^{2}\phi_{1}\wedge\phi_{2}+G_{t}^{2}\phi_{2}\wedge\phi_{3}=0.
\]
By the linear independence assumption and continuity, when $t$ is
small we have $H_{t}^{2}=G_{t}^{2}=0.$ In particular, 
\begin{equation}
G_{t}^{2}=\int_{0}^{t}\phi_{1}(dX_{s})=f_{1}(X_{t})-f_{1}(x_{0})=0\ \ \ \forall t\ \text{small.}\label{eq:ExactSubM}
\end{equation}
On the other hand, since $df_{1}(x_{0})\neq0$ (otherwise the linear
independence assumption cannot hold), there exists a neighbourhood
$U$ of $x_{0}$ such that 
\[
P\triangleq\{x\in U:f_{1}(x)=f_{1}(x_{0})\}
\]
is an $(n-1)$-dimensional submanifold in $U$. In particular, the
event 
\[
N\triangleq\bigcup_{r\in\mathbb{Q}_{+}}\{X_{r}\in P\}
\]
is a $\mathbb{P}$-null set. Note that the property (\ref{eq:ExactSubM})
implies that $N$ happens. Consequently, in both cases $m=2,3$, we
see that $DF(w)\neq0$ a.s. The existence of density thus follows.

Now suppose that the claim is true for iterated integrals of degree
$m-2$ where $m\geqslant4.$ . For the degree $m$ case, by taking
$k=m-1$ in (\ref{eq:ExactCons}) we have 
\[
DF(w)=0\implies G_{t}^{m-1}=\int_{0<t_{1}<\cdots<t_{m-2}<t}\phi_{1}(dX_{s})\cdots\phi_{m-2}(dX_{s})=0
\]
when $t$ is small. In particular, 
\[
\{DF=0\}\subseteq\bigcup_{r\in\mathbb{Q}_{+}}\{G_{r}^{m-1}=0\},
\]
which is a $\mathbb{P}$-null set since $G_{r}^{m-1}$ has a density
by the induction hypothesis. Therefore, $DF\neq0$ a.s. and the claim
holds for the degree-$m$ case. The result thus follows by induction.
\end{proof}
\begin{rem}
\label{rem:Deg1Exact}In contrast, when $m=1,$ the stochastic line
integral of a compactly supported exact form will never have a density.
Indeed, let $f$ be a compactly supported smooth function. Then
\[
F\triangleq\int_{0}^{T}(df)(dX_{t})=f(X_{T})-f(x_{0}).
\]
According to \cite[Theorem 1.5]{GOT21}, the density of $X_{T}$ is
everywhere strictly positive. It follows that 
\[
\mathbb{P}(X_{T}\in({\rm supp}f)^{c})>0.
\]
In particular, there is a positive probability that $F=-f(x_{0})$.
As a result, $F$ cannot have a density. Nonetheless, if we allow
${\rm supp}f=\mathbb{R}^{n}$ it is clearly possible that $F$ has
a density. For instance, take $f(x)=|x|^{2}$ with $X_{t}$ being
a Brownian motion.
\end{rem}

\section{\label{sec:UoS}An application: signature uniqueness for RDEs}

In this section, we discuss an application of Theorem \ref{thm:NonDegHypo}
to the probabilistic signature uniqueness problem. We first give the
definition of the signature transform of a rough path (cf. \cite{LCL07}).
Let $T((\mathbb{R}^{n}))\triangleq\stackrel[m=0]{\infty}{\prod}(\mathbb{R}^{n})^{\otimes m}$
denote the algebra of formal tensor series over $\mathbb{R}^{n}$
where $(\mathbb{R}^{n})^{\otimes0}\triangleq\mathbb{R}$.
\begin{defn}
Let ${\bf X}=({\bf X}_{t})_{0\leqslant t\leqslant T}$ be a rough
path over $\mathbb{R}^{n}$. The \textit{signature} of $\mathbf{X}$
is the formal tensor series defined by 
\begin{equation}
S({\bf X})\triangleq\big(1,\int_{0}^{T}d{\bf X}_{t},\cdots,\int_{0<t_{1}<\cdots<t_{m}<T}d{\bf X}_{t_{1}}\otimes\cdots\otimes d{\bf X}_{t_{m}},\cdots\big)\in T((\mathbb{R}^{n})).\label{eq:SigDef}
\end{equation}
\end{defn}

\begin{rem}
If ${\bf X}$ is a continuous path in $\mathbb{R}^{n}$ with bounded
variation, the iterated integrals in (\ref{eq:SigDef}) are all defined
in the classical sense of Lebesgue-Stieltjes. In the rough path case,
the well-definedness of $S({\bf X})$ follows from a basic extension
theorem of Lyons (cf. \cite{LQ02}).
\end{rem}

After extracting coordinates, the signature $S({\bf X})$ consists
of a countable family of numbers associated with the path ${\bf X}$.
It can be viewed as the pathwise / deterministic analogue of moments
of a random variable. There are two basic reasons of considering the
signature transform:

\vspace{2mm}\noindent (i) {[}\textit{The signature uniqueness theorem}{]}
Every (geometric) rough path is uniquely determined by its signature
up to tree-like pieces (cf. \cite{HL10,BGLY16}). Here a tree-like
piece is a portion along which the path travels out and reverses back
to cancel itself. \\
(ii) The signature $S({\bf X})$ has nice algebraic and analytic properties
that are concealed at the level of paths (cf. \cite{LCL07,Reu93}).

\vspace{2mm} In the probabilistic setting, the signature uniqueness
theorem may take a stronger form as we do not expect tree-like pieces
to appear for a suitably non-degenerate stochastic process. Below
is the main result in this section which extends earlier probabilistic
works \cite{LQ12,GQ16,BG15}. To reduce technicalities, we only consider
the elliptic or step-two hypoelliptic case.
\begin{thm}
\label{thm:ASSigUniq}Consider an $n$-dimensional RDE (\ref{eq:MainRDE})
driven by a $d$-dimensional fractional Brownian motion. Suppose that
the vector fields $\{V_{1},\cdots,V_{d}\}$ are $C_{b}^{\infty}$
and we are in one of the following two situations:

\vspace{2mm}\noindent (i) $n=d$ and the vector fields are elliptic;\\
(ii) $n=3,$ $d=2$ and the vector fields satisfy H\"ormander's condition.

\vspace{2mm}\noindent Then with probability one, every sample path
of the solution process $X=\{X_{t}:0\leqslant t\leqslant T\}$ is
uniquely determined by its signature up to reparametrisation.
\end{thm}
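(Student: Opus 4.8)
The plan is to follow the general strategy of Le Jan--Qian \cite{LQ12}: recover the full sample path from the signature by detecting, via suitably chosen iterated line integrals, the discrete "itinerary" of the process through an increasingly fine spatial decomposition. Concretely, I would fix a sequence of dyadic partitions of the relevant region of $M$ (either $\mathbb{R}^{n}$ or a chart atlas of the compact manifold) into small cubes $\{Q_{j}^{(N)}\}$ of side $2^{-N}$, and for each cube construct a compactly supported $C_{c}^{\infty}$ one-form $\phi_{Q}$ supported in a slight enlargement of $Q$. The construction of $\phi_{Q}$ is exactly where the non-degeneracy machinery of Section \ref{subsec:SingleLI} enters: in the elliptic case I invoke Theorem \ref{thm:Ellip} (any non-closed $\phi$ works, so Example \ref{exa:EllipExam} gives a template to translate and rescale), and in the step-two hypoelliptic case I invoke Corollary \ref{cor:Step2Explicit} / the proposition following Theorem \ref{thm:Step2Intro}, choosing $c_{1},c_{2}\in C_{c}^{\infty}$ supported near $Q$ with $d\phi\neq0$ a.e. on $\mathrm{supp}\,\phi$. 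The point of the density criterion is the corollary that the line integral $\int_{0}^{T}\phi_{Q}(dX_{t})$ is nonzero almost surely \emph{on the event that $X$ visits $Q^{\circ}$}, hence the vanishing of this integral detects (almost surely) non-visitation.

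Next I would upgrade from single line integrals to iterated ones so as to recover the temporal \emph{order} in which cubes are visited. Given a finite word $Q_{j_{1}},\dots,Q_{j_{m}}$ of (not necessarily distinct) cubes, form $F=\int_{0<t_{1}<\cdots<t_{m}<T}\phi_{Q_{j_{1}}}(dX_{t_{1}})\cdots\phi_{Q_{j_{m}}}(dX_{t_{m}})$. When the cubes are pairwise disjoint this is covered directly by Theorem \ref{thm:ExtSigDisSup}: $F$ has a conditional density given the event $E$ that $X$ visits the $Q_{j_{i}}^{\circ}$ in that order, so again $F\neq0$ a.s. on $E$, and $F=0$ a.s. off $E$ (deterministically, as noted before Theorem \ref{thm:ExtSigDisSup}). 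Crucially, each such $F$ is a finite linear combination of genuine signature coordinates of $X$ (expand each $\phi_{Q_{j_{i}}}=\phi_{Q_{j_{i}},k}\,dx^{k}$; the iterated line integral is a fixed polynomial, with coefficients the smooth functions $\phi_{Q,k}$ evaluated along the path, of the tensor entries of $S(\mathbf{X})$ — one needs the standard fact that iterated integrals against smooth one-forms are recoverable from the signature, cf. the shuffle/integration-by-parts identities in \cite{LCL07}). Taking a countable exhausting family of such words over all dyadic scales $N$, and excluding a single $\mathbb{P}$-null set $\mathcal{N}$ (the countable union of the exceptional null sets from Theorems \ref{thm:Ellip}, \ref{thm:ExtSigDisSup} and from the a.s.\ positivity/absolute continuity of the marginals $X_{r}$, $r\in\mathbb{Q}$), I obtain: for $w\notin\mathcal{N}$, the signature $S(\mathbf{X}(w))$ determines, for every $N$, the ordered list (with multiplicity, after a standard argument handling repeated visits using nested cubes) of dyadic cubes visited by $t\mapsto X_{t}(w)$.

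Finally I would pass to the limit $N\to\infty$ to reconstruct the path up to reparametrisation. From the ordered itineraries at all scales one recovers the sequence of points $X_{t}$ visited, ordered consistently in time, i.e. the image of the path together with its linear ordering — equivalently the path up to a non-decreasing (possibly non-strictly-increasing) time change; this is precisely "uniqueness up to reparametrisation." To rule out tree-like cancellations (so that the reconstruction is faithful and no genuine backtracking is lost), I would combine the quantitative signature uniqueness theorem \cite{HL10,BGLY16} — which already gives uniqueness up to tree-like equivalence — with the fact that a non-degenerate diffusion/RDE solution has a.s.\ no tree-like pieces: this follows because any tree-like sub-arc would force a line integral $\int \phi_{Q}(dX)$ against an appropriate non-closed $\phi_{Q}$ to vanish on an event of positive probability, contradicting the conditional-density corollary; alternatively one quotes \cite{BG15,GQ16} for the tree-reducedness of elliptic/hypoelliptic RDE paths.

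\textbf{Main obstacle.} The delicate point is the bookkeeping in the limit $N\to\infty$ and in handling \emph{repeated} visits to the same (or nested) cubes: disjointness of supports is what makes Theorem \ref{thm:ExtSigDisSup} apply cleanly, but a path typically re-enters a neighbourhood many times, so one must arrange the decomposition (e.g. with buffer zones and a finer nested family) so that every relevant excursion is captured by an iterated integral against one-forms with \emph{disjoint} supports, while still guaranteeing that the exceptional null sets remain countably many. Making this combinatorial-geometric construction precise — and verifying that the resulting countable family of iterated line integrals is genuinely extractable from $S(\mathbf{X})$ and suffices to pin down the ordered image — is the technical heart of the argument; the probabilistic input (existence of conditional densities) is entirely supplied by the theorems of Section \ref{subsec:SingleLI} and \ref{subsec:ILI}.
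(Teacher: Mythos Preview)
Your overall strategy is the paper's strategy, and you correctly identify that one can cite \cite{BG15} to black-box the Le Jan--Qian reconstruction. However, you have the emphasis inverted: the ``combinatorial-geometric construction'' you flag as the main obstacle is \emph{not} where the work lies here --- \cite{BG15} already reduces the whole theorem to verifying the single Condition~(ND) (for every cube $H$, produce a smooth one-form $\phi$ supported in $H$ with $\int_0^T\phi(dX_t)\neq0$ a.s.\ on $\{X\text{ enters }H\}$). You should simply cite that reduction and move on; no new bookkeeping about repeated visits or nested cubes is needed.

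The genuine gap is in the step-two hypoelliptic case, which you dispatch in one clause: ``choosing $c_1,c_2\in C_c^\infty$ supported near $Q$ with $d\phi\neq0$ a.e.\ on ${\rm supp}\,\phi$.'' This is exactly the content of the paper's proof and it is not automatic. For $\phi$ of the constrained form \eqref{eq:S2Form} one has $d\phi(V_1,V_2)=0$ identically, so ``$d\phi\neq0$ a.e.'' forces $d\phi(V_i,[V_1,V_2])\neq0$ a.e.\ for some $i$; with $c_2=0$ this reads
\[
-V_2^2c_1-\langle\omega^1,[V_2,[V_1,V_2]]\rangle\, c_1+\langle\omega^3,[V_2,[V_1,V_2]]\rangle\, V_2c_1\neq0\quad\text{a.e.\ on }{\rm supp}\,c_1,
\]
a second-order linear constraint on a \emph{compactly supported} $c_1$ in an \emph{arbitrary} small cube, with coefficients depending on the (unspecified) vector fields. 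The paper carries this out by passing to local coordinates with $V_2=\partial_x$, taking $c_1(x,y,z)=e^{-\lambda/(1-x^2)}\eta(y,z)$, and then proving (Lemma~\ref{lem:Sard}, via Sard's theorem applied to the two roots of the resulting quadratic in $\lambda$) that for some $\lambda>0$ the zero set of the above expression is Lebesgue null. Without this explicit construction (or an equivalent argument), Condition~(ND) is not verified and the proof does not close in case~(ii).
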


\begin{rem}
We expect the result to be true for the general hypoelliptic case
of arbitrary order, although the construction of relevant one-forms
(cf. Condition (ND) below) may be technically more involved in the
general case.
\end{rem}

Such a probabilistic uniqueness theorem was first established by Le
Jan and Qian \cite{LQ12} for the Brownian motion case. The result
was later extended to the cases of hypoelliptic diffusions in \cite{GQ16}
and Gaussian processes in \cite{BG15}. These works were largely based
on a technique developed in \cite{LQ12}, which was formalised in
\cite{BG15} down to the verification of three key conditions in the
context of a general stochastic process $X$. The first two conditions
are: (i) $X$ can be lifted as a rough path in a canonical way and
(ii) $X_{t}$ has a density for each $t>0$. These two conditions
are naturally satisfied for hypoelliptic RDEs. The last condition
is stated as follows.

\vspace{2mm}\noindent \textbf{Non-degeneracy Condition} (ND). For
any cube $H$ in $\mathbb{R}^{n}$, there exists a smooth one-form
$\phi$ supported in $H,$ such that conditional on the event that
``$X$ enters $H$ at some time'', the stochastic line integral
$\int_{0}^{T}\phi(dX_{t})$ is a.s. non-zero.

\vspace{2mm} It was proved in \cite{BG15} that the above three conditions
imply the signature uniqueness theorem for a general stochastic process
$X.$ As a result, in order to prove the aforementioned Theorem \ref{thm:ASSigUniq},
it remains to verify Condition (ND) under the given assumptions. Before
doing so, for the sake of completeness, we briefly recapture the main
strategy of \cite{LQ12} and explain at a conceptual level how Condition
(ND) leads to the signature uniqueness property.

\vspace{2mm} \noindent \textit{Step one}. Decompose the state space
$\mathbb{R}^{n}$ into disjoint cubes of order $\varepsilon$ with
narrow gaps $\delta$ ($\delta<<\varepsilon$). Label the cubes by
a set $L$ ($L=\mathbb{Z}^{n}$ in \cite{LQ12}).\\
\textit{Step two}. For each cube $H_{z}$ ($z\in L$), construct a
one-form $\phi_{z}$ supported in $H_{z}$ according to Condition
(ND). For each word $w=(z_{1},\cdots,z_{m})$ over $L$, one can define
the associated extended signature 
\[
[\phi_{z_{1}},\cdots,\phi_{z_{m}}]_{0,T}\triangleq\int_{0<t_{1}<\cdots<t_{m}<T}\phi_{z_{1}}(dX_{t_{1}})\cdots\phi_{z_{m}}(dX_{t_{m}})
\]
along the path $X.$ As a consequence of an algebraic property of
the signature, these extended signatures are all uniquely determined
by the signature of $X$. \\
\textit{Step three}. Due to Condition (ND), there exists a unique
word $w$ of maximal length, with respect to which the extended signature
is non-zero. This word precisely corresponds to the discrete route
of the path $X$ in the given space discretisation. As a result, the
signature of $X$ uniquely determines its discrete route.\\
\textit{Step four}. As we refine the discretisation (i.e. sending
$\varepsilon,\delta\rightarrow0$), the discrete route converges to
the original sample path $X$ in a suitable sense. Therefore, the
signature uniquely determines the trajectory of $X$.

\vspace{2mm} The rest of this section is devoted to the proof of
Theorem \ref{thm:ASSigUniq}.

\subsection*{Proof of Theorem \ref{thm:ASSigUniq}: Verification of Condition
(ND)}

In the elliptic case, we can use Example \ref{exa:EllipExam} to explicitly
construct one-forms satisfying Condition (ND). According to Theorem
\ref{thm:Ellip}, conditional on $X$ entering the cube $H$, the
associated line integral $\int_{0}^{T}\phi(dX_{t})$ (for $\phi$
given by Example \ref{exa:EllipExam}) has a density. This clearly
implies that its value is a.s. non-zero.

We now consider the step-two hypoelliptic case. Suppose that $n=3,$
$d=2$ and ${\cal V}=\{V_{1},V_{2},[V_{1},V_{2}]\}$ form a global
frame field of $\mathbb{R}^{3}$. We use the method of Corollary \ref{cor:Step2Explicit}
to construct suitable one-forms. Recall from (\ref{eq:S2Form}) that
such one-forms are given by 
\[
\phi=c_{1}\omega^{1}+c_{2}\omega^{2}+(V_{1}c_{2}-V_{2}c_{1})\omega^{3},
\]
where $\{\omega^{i}\}$ is the coframe of ${\cal V}$ and $c_{1},c_{2}$
are arbitrary smooth functions supported in the cube $H$. We want
to choose $\phi$ with ${\rm supp}\phi=H$ and $d\phi\neq0$ a.e.
in $H$. Note that $d\phi(V_{1},V_{2})=0.$ Hence we have to look
at $d\phi(V_{i},[V_{1},V_{2}]).$ Straightforward calculation yields
\[
d\phi(V_{i},[V_{1},V_{2}])=V_{i}(V_{1}c_{2}-V_{2}c_{1})-[V_{1},V_{2}]c_{i}-\langle\phi,[V_{i},[V_{1},V_{2}]]\rangle,\ \ \ i=1,2.
\]
We will set $c_{2}=0$, so that 
\begin{equation}
d\phi(V_{2},[V_{1},V_{2}])=-V_{2}(V_{2}c_{1})-\langle\omega^{1},[V_{2},[V_{1},V_{2}]]\rangle\cdot c_{1}+\langle\omega^{3},[V_{2},[V_{1},V_{2}]]\rangle\cdot V_{2}c_{1}.\label{eq:UniqPf}
\end{equation}
In other words, we want to construct $c_{1}$ with ${\rm supp}c_{1}=H$,
such that the above expression is a.e. non-zero in $H$.

According to \cite[Chap. 1, Theorem 4.3]{CCL00}, a non-degenerate
vector field locally generates coordinate curves. Since we will eventually
refine the space discretisation, we may assume without loss of generality
that $H$ is contained in a coordinate chart $[U;x,y,z]$ of $\mathbb{R}^{3}$
where $V_{2}=\partial_{x}.$ To simplify notation, we further assume
that $H$ is the unit cube
\[
H=\{(x,y,z):\max\{|x|,|y|,|z|\}<1\}
\]
under the above coordinate system. We define
\[
c_{1}(x,y,z;\lambda)\triangleq h_{\lambda}(x)\eta(y,z),
\]
where $\lambda>0$ is a parameter to be chosen later on, 
\[
h_{\lambda}(x)\triangleq\begin{cases}
e^{-\frac{\lambda}{1-x^{2}}}, & |x|<1;\\
0, & |x|\geqslant1,
\end{cases}
\]
and $\eta(y,z)$ is a given smooth function supported on $\bar{H}_{y,z}\triangleq\{(y,z):\max\{|y|,|z|\}\leqslant1\}$
which is strictly positive in the interior. Under such choice of $c_{1},$
the equation (\ref{eq:UniqPf}) can be concisely written as 
\[
-d\phi(V_{2},[V_{1},V_{2}])=\big(h''_{\lambda}(x)+f(x,y,z)h_{\lambda}'(x)+g(x,y,z)h_{\lambda}(x)\big)\eta(y,z),
\]
where $f,g$ are known $C^{\infty}$-functions. Our proof will be
concluded from the following lemma.
\begin{lem}
\label{lem:Sard}There exists $\lambda>0,$ such that
\[
N_{\lambda}\triangleq\{(x,y,z)\in H:h''_{\lambda}(x)+f(x,y,z)h_{\lambda}'(x)+g(x,y,z)h_{\lambda}(x)=0\}
\]
is a Lebesgue null set.
\end{lem}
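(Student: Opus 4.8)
The plan is to treat the parameter $\lambda$ as a free dial and show that for some value the ``zero set'' $N_\lambda$ is forced to have measure zero. First I would restrict attention to the slab $0<|x|<1$, since on $|x|\geqslant 1$ the function $h_\lambda$ and all its derivatives vanish identically, so the defining equation of $N_\lambda$ is satisfied everywhere there --- but that part of $H$ is a set of measure zero once we note $\eta$ is supported in $\bar H_{y,z}$... actually no: the issue is that on $\{|x|=1\}$ the whole bracket vanishes, but $\{|x|=1\}\cap H$ is already Lebesgue-null in $\mathbb R^3$, so it contributes nothing. Hence I may work on the open slab $|x|<1$, where $h_\lambda(x)=e^{-\lambda/(1-x^2)}>0$. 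Dividing the bracket by $h_\lambda(x)$ (legitimate since it is nonvanishing there) reduces the defining relation to
\[
\frac{h_\lambda''(x)}{h_\lambda(x)}+f(x,y,z)\,\frac{h_\lambda'(x)}{h_\lambda(x)}+g(x,y,z)=0 .
\]
A direct computation gives $h_\lambda'(x)/h_\lambda(x)=-2\lambda x/(1-x^2)^2$ and $h_\lambda''(x)/h_\lambda(x)=\bigl(2\lambda x/(1-x^2)^2\bigr)^2+\partial_x\bigl(-2\lambda x/(1-x^2)^2\bigr)$, so the ratio $h_\lambda''/h_\lambda$ is a polynomial of degree $2$ in $\lambda$ with coefficients that are rational functions of $x$ (smooth on $|x|<1$), while $h_\lambda'/h_\lambda$ is linear in $\lambda$. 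Thus the whole expression, viewed as a function on the slab, is a quadratic polynomial $P(x,y,z;\lambda)=A(x)\lambda^2+B(x,y,z)\lambda+C(x,y,z)$ whose leading coefficient $A(x)=\bigl(2x/(1-x^2)^2\bigr)^2$ is strictly positive off $x=0$.

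The key step is then a Fubini/parameter argument. Consider the set
\[
Z\triangleq\{(x,y,z,\lambda): 0<|x|<1,\ (y,z)\in H_{y,z},\ \lambda>0,\ P(x,y,z;\lambda)=0\}.
\]
For each fixed $(x,y,z)$ with $x\neq 0$, the slice $Z_{(x,y,z)}=\{\lambda>0:P=0\}$ has at most two points (a nonzero quadratic in $\lambda$), hence is Lebesgue-null in $\mathbb R$; by Fubini $Z$ is null in the four-dimensional product, and therefore
\[
\int_0^\infty \bigl|\,\{(x,y,z): P(x,y,z;\lambda)=0\}\,\bigr|\,d\lambda
= \bigl|\,Z\,\bigr| = 0 .
\]
Consequently the set of $\lambda>0$ for which the three-dimensional slice $N_\lambda$ fails to be null is itself a Lebesgue-null subset of $(0,\infty)$; in particular there exist (indeed, almost every) $\lambda>0$ with $|N_\lambda|=0$. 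Picking any such $\lambda$ proves the lemma.

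The main obstacle --- the point that needs genuine care rather than routine verification --- is confirming that the coefficient functions $f$ and $g$ arising from $-d\phi(V_2,[V_1,V_2])$ really are independent of $\lambda$, i.e. that all the $\lambda$-dependence of the bracket is carried by $h_\lambda$ and its derivatives and enters polynomially. This is where one must unwind the explicit formula (\ref{eq:UniqPf}) with $c_1(x,y,z;\lambda)=h_\lambda(x)\eta(y,z)$ and $V_2=\partial_x$: the terms $V_2(V_2 c_1)=h_\lambda''(x)\eta$, the coefficient of $c_1$, and the coefficient of $V_2c_1=h_\lambda'(x)\eta$ all have $(x,y,z)$-coefficients built only from $V_1,V_2$ and the coframe, with no $\lambda$ in sight, and $\eta>0$ on the interior so it may be divided out. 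Once that structural fact is in hand, the rest is exactly the elementary quadratic-in-$\lambda$ plus Fubini argument sketched above; no appeal to Sard's theorem in its differential-topological form is actually required, despite the lemma's label.
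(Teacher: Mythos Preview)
Your argument is correct and takes a genuinely different route from the paper's. Both proofs start the same way: after dividing by $h_\lambda(x)>0$ on $\{|x|<1\}$, the defining relation of $N_\lambda$ becomes a quadratic $\Phi_\lambda(x,y,z)=0$ in $\lambda$ whose leading coefficient vanishes only at $x=0$. From that point the paper solves for $\lambda$ via the quadratic formula, obtaining smooth ``root functions'' $\psi_\pm$ on $\{\Delta>0,\,x\neq0\}$ (together with a third function $q=-p/(8x^2)$ handling the double-root locus $\Delta=0$), and then applies Sard's theorem to pick $\lambda$ as a common regular value of all three; for such $\lambda$ the set $N_\lambda\cap\{x\neq0\}$ is contained in a finite union of two-dimensional level submanifolds, hence null. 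Your Fubini argument bypasses Sard entirely: each $(x,y,z)$-section of the zero set (with $x\neq0$) contains at most two values of $\lambda$, so the four-dimensional zero set is null, and re-slicing in the $\lambda$-direction gives a null $N_\lambda$ for almost every $\lambda>0$. Your approach is more elementary and avoids the discriminant bookkeeping the paper has to carry; the paper's route yields the additional geometric fact that $N_\lambda$ is an actual submanifold for good $\lambda$, but this extra structure is not used downstream. Your final remark that $f$ and $g$ are $\lambda$-independent is correct and is exactly what the paper tacitly relies on when writing the expression as a quadratic in $\lambda$.
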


\begin{rem}
It will be clear from the proof below that Lemma \ref{lem:Sard} holds
for all $\lambda$ outside a suitable null set of $(0,\infty)$. For
our purpose, we only need one such $\lambda$.
\end{rem}

\begin{proof}
Explicit calculation shows that 
\[
h''_{\lambda}(x)+f(x,y,z)h_{\lambda}'(x)+g(x,y,z)h_{\lambda}(x)=\frac{h_{\lambda}(x)}{(1-x^{2})^{4}}\cdot\Phi_{\lambda}(x,y,z),
\]
where 
\begin{equation}
\Phi_{\lambda}(x,y,z)=4x^{2}\lambda^{2}-2(1-x^{2})(1+3x^{2}+x(1-x^{2})f)\lambda+(1-x^{2})^{4}g.\label{eq:Quadratic}
\end{equation}
Observe that $\Phi_{\lambda}(x,y,z)$ is a quadratic polynomial in
$\lambda.$ It is easy to see that $(x,y,z)\in N_{\lambda}\cap\{x\neq0\}$
if and only if 
\[
\lambda=\frac{-p\pm\sqrt{\Delta}}{8x^{2}}\ \text{and }\Delta\geqslant0,
\]
where $p,\Delta$ are known $C^{\infty}$-functions on $H$ that can
be expressed explicitly in terms of $f,g$ ($\Delta$ is the discriminant
of (\ref{eq:Quadratic})).

We now consider the following three smooth functions:
\[
\psi_{\pm}\triangleq\frac{-p\pm\sqrt{\Delta}}{8x^{2}},\ q\triangleq\frac{-p}{8x^{2}},
\]
where $\psi_{\pm}$ are defined on $E\triangleq\{\Delta>0\}\cap\{x\neq0\}$
(could possibly be empty) and $q$ is defined on $H\cap\{x\neq0\}$.
Recall that the critical set of a smooth function $F:U\rightarrow\mathbb{R}$
consists of those points in $U$ at which $\nabla F=0$. The classical
Sard's theorem (cf. \cite[Chap. 2]{Mil97}) asserts that the image
of the critical set of a smooth function is a Lebesgue null set. Let
$Y_{\pm},Z$ be the critical sets of $\psi_{\pm},q$ respectively.
It follows that 
\[
C\triangleq\psi_{+}(Y_{+})\cup\psi_{-}(Y_{-})\cup q(Z)
\]
is a Lebesgue null set in $\mathbb{R}.$ As a result, there exists
at least one $\lambda\in(0,\infty)\cap C^{c}$. We fix one such $\lambda.$
Then each of $\psi_{+}^{-1}(\lambda),$ $\psi_{-}^{-1}(\lambda)$,
$q^{-1}(\lambda)$ is either empty or a two-dimensional sub-manifold
in $H$. The result thus follows from the observation that 
\[
N_{\lambda}\cap\{x\neq0\}\subseteq\psi_{+}^{-1}(\lambda)\cup\psi_{-}^{-1}(\lambda)\cup q^{-1}(\lambda).
\]
Note that the slice $\{x=0\}$ is a Lebesgue null set and has no effect
on our discussion.
\end{proof}
If we choose $\lambda$ as in Lemma \ref{lem:Sard}, for the resulting
one-form $\phi$ we have 
\[
d\phi(V_{2},[V_{1},V_{2}])\neq0
\]
except on a low dimensional manifold which has zero Lebesgue measure.
Therefore, $d\phi\neq0$ a.e. inside the support of $\phi$. The Condition
(ND) is then a consequence of Corollary \ref{cor:Step2Explicit}.

\end{document}